\let\mathbb\mathds
\DeclareMathAlphabet\mathbfcal{OMS}{cmsy}{b}{n}
\pgfplotsset{compat=1.13}
\tikzset{Rightarrow/.style={double equal sign distance,>={Implies},->},
	triple/.style={-,preaction={draw,Rightarrow}},
	quadruple/.style={preaction={draw,Rightarrow,shorten >=0pt},shorten >=1pt,-,double,double
		distance=0.2pt}}
\def\on{\operatorname}
\def\ho{\on{ho}}
\def\CC{\mathbb{C}}
\def\DD{\mathbb{D}}
\def\OO{\mathbb{O}}
\def\Fun{\on{Fun}}
\def\Cat{\on{Cat}}
\def\Hom{\on{Hom}}
\def\D{\EuScript{D}}
\def\Nerv{\on{N}}
\def\NN{\on{N_2}}
\def\Lcat{{\on{LCat}}}
\def\id{\on{id}}
\def\Set{\on{Set}}
\DeclarePairedDelimiterX\set[1]{\lbrace}{\rbrace}{#1}
\newlist{implications}{description}{1} % Mejorar todo este sistema
\setlist[implications]{itemsep=0pt,leftmargin=\parindent}
\NewDocumentCommand\implication{o}
  {\IfValueTF{#1}
    {\auximplication#1\relax}
    {\item[\normalfont($\,\Rightarrow\,$)]}}
\NewDocumentCommand\auximplication{u-u\relax}
  {\item[\normalfont(#1)$\,\Rightarrow\,$(#2)]}
\newcommand*\backimplication{\item[\normalfont($\,\Leftarrow\,$)]}
\newcounter{diagram}[section]
\def\thediagram{\thesection.\arabic{diagram}}
\def\ftype@diagram{4}
\def\ext@diagram{diag}
\def\fnum@diagram{Diagram~\thediagram}
\def\fs@diagram{htbp!}
\NewDocumentEnvironment{diagram}{O{htbp!}m}
  {\@float{diagram}[#1]\centering}
  {
   
   \caption{}
   \label{#2}
   \end@float
  }
\newcounter{subdiagram}[diagram]
\def\thesubdiagram{\thediagram.\arabic{subdiagram}}
\NewDocumentCommand\domultidiagram{omu\enddomultidiagram}
 {
  \IfValueTF{#1}{\diagram[#1]}{\diagram}{}
   \refstepcounter{diagram}
   \centering
    \seq_clear:N \l_tmpb_seq
    \seq_set_split:Nnn \l_tmpa_seq { \next } { #3 }
    \seq_map_inline:Nn \l_tmpa_seq
     {
      \seq_put_right:Nn \l_tmpb_seq
       {
        \begin{tabular}[b]{@{}c@{}}
         ##1 \\[3ex]
         \refstepcounter{subdiagram}
         \label{#2\othercolon\the\value{subdiagram}}
         Diagram~\thesubdiagram 
        \end{tabular}
       }
     }
    \seq_use:Nn \l_tmpb_seq { \qquad }
   \let\label\@gobble
   \let\caption\@gobble
  \enddiagram
 }
\def\othercolon{:}
\declaretheoremstyle[bodyfont=\itshape,notefont=\bfseries]{abellanA}
\declaretheoremstyle[notefont=\bfseries]{abellanB}
\declaretheorem[style=abellanA,numberwithin=subsection,name={Theorem}]{theorem}
\declaretheorem[style=abellanA,numberwithin=subsection,name={Conjecture}]{conjecture}
\declaretheorem[style=abellanA,numberlike=theorem,name={Lemma}]{lemma}
\declaretheorem[style=abellanB,numberlike=theorem,name={Definition}]{definition}
\declaretheorem[style=abellanB,numberlike=theorem,name={Remark}]{remark}
\declaretheorem[style=abellanB,numberlike=theorem,name={Construction}]{construction}
\declaretheorem[style=abellanA,numberlike=theorem,name={Proposition}]{proposition}
\declaretheorem[style=abellanB,numberlike=theorem,name={Example}]{example}
\declaretheorem[style=abellanB,numbered=no,name={Notation}]{notation}
\declaretheorem[style=abellanA,numberlike=theorem,name={Corollary}]{corollary}
\let\leq\leqslant
\let\geq\geqslant
\let\epsilon\varepsilon
\let\isom\cong
\newcommand*\mathblank{\mathord{-}}
\def\msSet{{\on{Set}_{\Delta}^+}}
\def\Cat{\on{Cat}}
\def\cchi{\mathbb{\bbchi}}
\def\PPhi{\mathbb{\bbphi}}
\let\emptyset\varnothing
\newcommand{\fixed@sra}{$\vrule height 2\fontdimen22\textfont2 width 0pt\rightarrow$}
\newcommand{\shortarrowup}[1]{%
	\mathrel{\text{\rotatebox[origin=c]{65}{\fixed@sra}}}
}
\newcommand{\shortarrowdown}[1]{%
	\mathrel{\text{\rotatebox[origin=c]{250}{\fixed@sra}}}
}
\newcommand{\upslash}{\!\shortarrowup{1}}
\newcommand{\downslash}{\!\shortarrowdown{1}}
\def\lra{\longrightarrow}
\def\lla{\longleftarrow}
\def\llra{\def\arraystretch{.1}\begin{array}{c} \lra \\ \lla \end{array}}
\DeclareMathOperator\Nsc{N^{sc}}
\def\op{{\on{op}}}
\newcommand*\dirlim{\mathop{\mathpalette\varlim@{\rightarrowfill@\scriptscriptstyle}}\nmlimits@}
\newcommand*\prolim{\mathop{\mathpalette\varlim@{\leftarrowfill@\scriptscriptstyle}}\nmlimits@}
\def\llra{\def\arraystretch{.1}\begin{array}{c} \lra \\ \lla \end{array}}
\newcommand{\nat}{\Rightarrow}
\tikzset{
  abellanarrows/.style={line cap=round,line join=round,line width=.4pt},
  abellanarrowlength/.store in=\abellanarrowlength,
}
\NewDocumentCommand \func { s O{} m }
 {
  \group_begin:
   \IfBooleanTF{#1}
    { \keys_set:nn { abellan / func } { aligned = true , #2 } }
    { \keys_set:nn { abellan / func } {#2} }
   \abellan_func:n {#3}
  \group_end:
 }
\NewDocumentCommand \arr { s o m }
 {
  \IfBooleanF{#1}
   { \bool_if:NT \l_abellan_aligned_bool { & } }
  \abellan_arr:n {#3}
 }
\NewDocumentCommand \addarr { o m m }
 {
  \keys_set:nn { abellan / func / addarrow } { name = {#2} , #3 }
  \tl_clear:N \l_abellan_arrname_tl
 }
\NewDocumentCommand \setupfunc { m } { \keys_set:nn { abellan / func } {#1} }
\tikzset{abellanarrowlength={#1}} ,
\tikzset{abellanarrows/.append ~ style={#1}} ,
\NewDocumentCommand \abellan_addarrow:nnww { m m O{} u\q_abellan }
 {
  \exp_args:Nc \NewDocumentCommand { abellan_arr_#1_#2:w } { #3 }
   {
    \use:c { abellan_arr_ \l_abellan_arrmode_tl :n } { #4 }
   }
 }
\newcommand*\resetdynamicto
\gdef\dynamicto{\arr*{to}\gdef\dynamicto{\arr*{mapsto}}}}
\NewDocumentCommand \printheader { m o m }
 {
  \par\noindent
  \begin{minipage}[t]{\textwidth}\noindent
  
  \begin{tabular}[t]{ll}
     & \keyval_parse:NNn \abellan_printname:n \abellan_printnamemail:nn { #3 } %\\[-2ex]
    %Asignatura & #1
  \end{tabular}
  \vspace{.4cm}
  \end{minipage}
  \begin{center}\Large\bfseries
   #1 \IfValueT{#2}{\\[1ex] \large #2}
  \end{center}
  \vspace{.6cm}
 }
\quad\texttt{#2} \\ &
				\string\usetikzlibrary{decorations.markings} to use arrows with markings}{}}{}%
\newglossaryentry{CCopop}{
	name={$\CC^{(-,\op)}$},
	description={The 2-morphism dual}
}
\newglossaryentry{N2}{
	name={$\Nerv_2$},
	description={The Duskin 2-nerve}
}
\newglossaryentry{OI}{
	name={$\OO^I$},
	description={The free 2-category on the $I$-simplex}
}
\newglossaryentry{lslice}{
	name={$\CC_{c /}$},
	description={{The lax slice category of a 2-category}}
}
\newglossaryentry{LW}{
	name={$L_{\mathcal{W}}$},
	description={The $(\infty,1)$-categorical localization functor}}
\newglossaryentry{Cdag}{
	name={$\CC^\dagger$},
	description={A marked 2-category}
}
\newglossaryentry{cchi}{
	name={$\cchi_{\CC}$},
	description={The relative 2-nerve}
}
\newglossaryentry{tilcchi}{
	name={$\widetilde{\cchi}_{\CC}$},
	description={The `thickened' relative 2-nerve}
}
\newglossaryentry{CCslash}{
	name={$\mathfrak{C}_{\CC /}$},
	description={Functors which encode the 2-functoriality of lax slices}
}
\newglossaryentry{colimdag}{
	name={$\on{colim}^\dagger F$},
	description={The marked colimit of a 2-functor $F$}
}
\newglossaryentry{El}{
	name={$\on{El}(F)^\dagger$},
	description={The 2-categorical Grothendieck construction}
}
\renewcommand{\glossarysection}[2][]{}
\def\scr{\EuScript}
\title{Theorem A for marked 2-categories}
\author{Fernando Abell\'an Garc\'ia, Walker H. Stern}
\date{}
\begin{document}
  \maketitle
	
		\begin{abstract}
			In this work, we prove a generalization of Quillen's Theorem A to 2-categories equipped with a special set of morphisms which we think of as weak equivalences, providing sufficient conditions for a 2-functor to induce an equivalence on $(\infty,1)$-localizations. When restricted to 1-categories with all morphisms marked, our theorem retrieves the classical Theorem A of Quillen. We additionally state and provide evidence for a new conjecture: the \emph{cofinality conjecture}, which describes the relation between a conjectural theory of marked $(\infty,2)$-colimits and our generalization of Theorem A. 
		\end{abstract}
	
\tableofcontents

\section*{Introduction}
\addcontentsline{toc}{section}{Introduction}
\subsection*{Towards a generalization of Theorem A}
\addcontentsline{toc}{subsection}{Towards a generalization of Theorem A}
Quillen's Theorems A and B are bedrock results in higher category theory, establishing conditions under which a functor of categories $\func{F:\mathcal{C}\to \mathcal{D}}$ defines a homotopy equivalence or fiber sequence of spaces, respectively. These theorems have been key to the modern understanding of algebraic topology --- not only in the original K-theoretic context of Quillen (cf. \cite{QuillenK}) but also in contexts ranging from algebraic topology to higher category theory. Philosophically speaking, since every homotopy type can be represented as the nerve of a category, Theorem A can be regarded as a fundamental tool for attacking homotopy-theoretic questions with explicit combinatorial presentations.

The criterion of Quillen's Theorem A --- that the slice categories of $F$ be contractible --- has additional significance in the study of homotopy (i.e. $(\infty,1)$-) colimits. A functor satisfies the criterion if and only if it is \emph{homotopy cofinal}, that is, restricting diagrams along it preserves their homotopy colimits. Particularly in the study of $\infty$-categorical Kan extensions, this makes the criterion of Theorem A an invaluable tool for explicit computation. 

In both of these areas, however, there are settings of interest in which Theorem A does not capture all of the salient features. Most notably, one often wants to associate an $\infty$-category to a 1-category, rather than simply a classifying space. In contexts where the aim is to retain more $\infty$-categorical structure, a generalization of Theorem A is thus highly desirable. 

With the benefit of the modern toolbox, one can make an observation that reframes the criterion of Theorem A: a non-empty Kan complex $K$ is contractible if and only if every object of $K$ is initial (when $K$ is viewed as an $\infty$-groupoid). One can then rephrase Quillen's Theorem A as: 

\begin{theorem}[Quillen's Theorem A]
	Let $F:\mathcal{C}\to \mathcal{D}$ be a functor between $1$-categories. For $d\in \mathcal{D}$, denote by $N(\mathcal{C}_{d/})^\simeq$ the $\infty$-groupoid completion of the overcategory. Suppose that, for every $d\in \mathcal{D}$, 
	\begin{itemize}
		\item There exists $c\in\mathcal{C}$ and a morphism $d\to F(c)$
		\item Every morphism $d\to F(b)$ represents an initial object of $N(\mathcal{C}_{d/})^\simeq$.
	\end{itemize}
	Then $\func{|F|:|N(\mathcal{C})|\to |N(\mathcal{D})|}$ is a homotopy equivalence. 
\end{theorem}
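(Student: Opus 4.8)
The plan is to deduce the statement from the $\infty$-categorical reformulation of Quillen's Theorem~A, namely from Joyal's characterization of cofinal maps of simplicial sets. The first step is to record that the two displayed hypotheses together say exactly that, for every $d \in \mathcal{D}$, the Kan complex $N(\mathcal{C}_{d/})^\simeq$ is weakly contractible. Indeed, in an $\infty$-groupoid an object is initial precisely when all of its mapping spaces are contractible, and a non-empty Kan complex possessing an initial object is itself contractible. The first bullet provides an object of $N(\mathcal{C}_{d/})^\simeq$ (the class of some $d \to F(c)$), while the second guarantees that this object is initial; hence $N(\mathcal{C}_{d/})^\simeq \simeq \ast$ for all $d$.

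Second, I would match these comma $\infty$-groupoids with the slices appearing in the cofinality criterion. Writing $F$ also for the induced map $N(F) \colon N(\mathcal{C}) \to N(\mathcal{D})$, the groupoid completion $N(\mathcal{C}_{d/})^\simeq$ is weakly equivalent to the fiber product $N(\mathcal{C}) \times_{N(\mathcal{D})} N(\mathcal{D})_{d/}$, whose vertices are the pairs $(c, d \to F(c))$. By Joyal's theorem (as in Lurie's \emph{Higher Topos Theory}, Thm.~4.1.3.1), the map $N(F)$ is cofinal if and only if each of these fiber products is weakly contractible; by the first step this is precisely our situation, so $N(F)$ is cofinal.

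Third, I would extract the homotopy equivalence. Cofinal maps preserve colimits: for any diagram $G \colon N(\mathcal{D}) \to \mathcal{S}$ into the $\infty$-category of spaces, the comparison map $\on{colim}(G \circ N(F)) \to \on{colim}(G)$ is an equivalence. Applying this to the constant diagram at a point, and using that the colimit of the constant point-valued diagram over an $\infty$-category $X$ computes its $\infty$-groupoid completion $|X|$ (equivalently $L_{\mathcal{W}}$ of $X$ with every morphism marked), I obtain that $|N(\mathcal{C})| \to |N(\mathcal{D})|$ is an equivalence, as desired.

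The main obstacle is the comparison in the second step: one must verify that the strict $1$-categorical comma category $\mathcal{C}_{d/}$, after inverting all of its morphisms, genuinely models the homotopy fiber product $N(\mathcal{C}) \times_{N(\mathcal{D})} N(\mathcal{D})_{d/}$, i.e. that forming nerves and groupoid-completing is compatible with the slice construction and does not alter the homotopy type. Once this identification is secured --- together with the standard fact that the colimit of a constant point-diagram is the classifying space --- the remaining steps are formal. A route that sidesteps the slice subtleties is the Thomason-style argument: present $|N(\mathcal{D})|$ as $\on{hocolim}_{\mathcal{D}} \ast$, rewrite $|N(\mathcal{C})|$ through the Grothendieck construction as $\on{hocolim}_{d \in \mathcal{D}} |N(\mathcal{C}_{d/})|$, and conclude that the contractibility of the fibers makes the comparison map a pointwise, hence global, weak equivalence.
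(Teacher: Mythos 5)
Your argument is correct, but it is not the route the paper takes. The paper never proves this statement directly: it is obtained as the extreme special case of the main theorem (\autoref{thm:thebigone}, via \autoref{cor:equivtobig} and \autoref{thm:Adag1Cat}) in which both categories are $1$-categories with every morphism marked, so that $L_{\mathcal{W}}$ becomes Kan--Quillen fibrant replacement and the conclusion becomes an equivalence of spaces. The proof of that general theorem does not pass through cofinality of $F$ at all; instead it constructs an explicit weak inverse to $F_{\mathcal{W}}$ by packaging $d\mapsto L_{\mathcal{W}}(\mathcal{C}_{d/})$ into a (locally) Cartesian fibration over $N(\mathcal{D})$ via the relative nerve, using the initiality hypotheses to produce an initial section (\autoref{prop:locCartTrivKanFib}), and then comparing that section with the localization maps. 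Your proof instead reduces everything to Joyal's cofinality criterion (\cite[Thm.~4.1.3.1]{HTT}) together with the fact that the colimit of the constant point diagram computes the classifying space; the steps you flag as needing verification (that the strict fiber product $N(\mathcal{C})\times_{N(\mathcal{D})}N(\mathcal{D})_{d/}$ is isomorphic to the nerve of the comma category, and that weak contractibility is detected by the groupoid completion) are standard and go through. The trade-off is worth noting: your route is shorter, but it leans on a theorem that is itself the $\infty$-categorical form of Theorem A, and it works only because in the fully marked case the hypotheses happen to coincide with cofinality --- a property strictly stronger than inducing an equivalence on localizations, as the paper emphasizes in its discussion of marked cofinality. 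The paper's section-based argument avoids that identification entirely, which is precisely what allows it to generalize to arbitrary markings and to $2$-categories. Your closing Thomason-style alternative is essentially the classical bisimplicial argument that the paper attributes to Quillen and to Bullejos--Cegarra.
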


With this reframing, it becomes possible to generalize Theorem A to a much broader context. The $\infty$-groupoidification of $N(\mathcal{C})$ is nothing more or less than the $\infty$-categorical localization of $N(\mathcal{C})$ at all morphisms, so by generalizing from the set of all morphisms to any wide subcategory of weak equivalences, we can attempt to provide criteria under which a functor of $1$-categories with weak equivalences induces an equivalence on $\infty$-categorical localizations. It will turn out that the eventual form of our generalization will closely mirror the criteria above. 

Quillen's Theorem A relates several levels of categorification and strictness. It is a statement about \emph{strict 1-categories} which then implies a conclusion about \emph{$(\infty,0)$-groupoids}. Analogously, we aim to establish a statement about \emph{strict 2-categories} which implies a conclusion about \emph{$(\infty,1)$-categories}. To this end, we equip 2-categories with distinguished collections of 1-morphisms, yielding a notion we call \emph{marked 2-categories}. Taking the above reformulation of Theorem A as a model, this paper proposes and proves the following theorem 

\begin{theorem}[Theorem A\textsuperscript{$\dagger$}]\label{thm:AdagIntro}
	Let $F:\CC^\dagger \to \DD^\dagger$ be a functor of 2-categories with weak equivalences. Suppose that, for every object $d\in \DD$, 
	\begin{enumerate}
		\item There exists and object $c\in \CC$ and a marked morphism \begin{tikzcd}
			d \arrow[r,circled] & F(c)
		\end{tikzcd}
		\item Every marked morphism \begin{tikzcd}
			d \arrow[r,circled] & F(c)
		\end{tikzcd} 
		is initial in the $\infty$-categorical localization $L_\mathcal{W}(\Nerv_2(\CC_{d\downslash})^\dagger)$. 
	\end{enumerate}
	Then the induced functor $\func{F_{\mathcal{W}}: L_\mathcal{W}(\Nerv_2(\CC^\dagger))\to L_\mathcal{W}(\Nerv_2(\DD^\dagger))}$ is an equivalence of $\infty$-categories. 
\end{theorem}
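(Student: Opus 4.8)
My plan is to deduce the theorem from the $\infty$-categorical adjoint functor theorem, by showing that $F_\mathcal{W}$ admits a left adjoint which is in fact an adjoint equivalence. The bridge between the hypotheses and this strategy is the observation that the localized lax slice appearing in the statement should compute the comma $\infty$-category of $F_\mathcal{W}$: for each $d\in\DD$ one expects a natural equivalence
\[
 L_\mathcal{W}\bigl(\Nerv_2(\CC_{d\downslash})^\dagger\bigr)\ \simeq\ d\downarrow F_\mathcal{W},
\]
where the right-hand side is the comma $\infty$-category of arrows $d\to F_\mathcal{W}(c)$ in $L_\mathcal{W}(\Nerv_2\DD^\dagger)$. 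Granting this, condition~(2) says precisely that $d\downarrow F_\mathcal{W}$ has an initial object, realised by a marked (hence invertible in the localization) $1$-morphism, while condition~(1) guarantees that this comma is non-empty, so that the initial object genuinely exists for every $d$.

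The technical heart, which I expect to be the main obstacle, is establishing the displayed comma identification in the marked $2$-categorical setting. For this I would use the relative $2$-nerve $\cchi_\CC$ together with its thickened variant $\widetilde\cchi_\CC$ and the $2$-categorical Grothendieck construction $\on{El}(F)^\dagger$: the functors $\mathfrak{C}_{\CC/}$ encoding the $2$-functoriality of lax slices assemble the assignment $d\mapsto\Nerv_2(\CC_{d\downslash})^\dagger$ into a fibration over $\Nerv_2(\DD)$, and one must show (i) that this fibration models $F$ up to the appropriate weak equivalence, and (ii) that passing to $\mathcal{W}$-localizations commutes with forming lax slices, so that the localized fibre over $d$ is the comma $d\downarrow F_\mathcal{W}$. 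Controlling the non-invertible $2$-cells and the laxness of the slices -- rather than working with the strict fibres of the $(\infty,1)$-categorical case -- is what makes this step delicate; it is here that the model-categorical input of marked simplicial sets and the straightening/unstraightening comparison must be deployed with care.

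With the comma identification in hand, the conclusion follows formally. By the adjoint functor theorem, conditions~(1) and~(2) provide a left adjoint $G$ to $F_\mathcal{W}$, with $G(d)$ the domain of the initial arrow and unit $\eta_d\colon d\to F_\mathcal{W}G(d)$ equal to that arrow. Since the initial arrow is marked, $\eta_d$ is an equivalence in $L_\mathcal{W}(\Nerv_2\DD^\dagger)$ for every $d$, whence $\eta$ is a natural equivalence and $G$ is fully faithful. To see that the counit is likewise invertible I would apply condition~(2) to the object $d=F(c)$ and the identity $1$-morphism $\id_{F(c)}$: since an identity is marked, $(c,\id_{F(c)})$ is then an initial object of $F(c)\downarrow F_\mathcal{W}$, and comparing it with the initial object $(GF(c),\eta_{F(c)})$ yields a natural equivalence $GF_\mathcal{W}(c)\simeq c$, that is, an invertible counit $\epsilon$. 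Hence $G\dashv F_\mathcal{W}$ is an adjoint equivalence and $F_\mathcal{W}$ is an equivalence of $\infty$-categories.

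As a consistency check, when $\CC$ and $\DD$ are $1$-categories with every morphism marked, the localized slice becomes the $\infty$-groupoid completion $\Nerv(\CC_{d/})^\simeq$, in which possessing an initial object is equivalent to contractibility; the argument then reduces to the reformulated Theorem~A of the introduction, the adjoint equivalence of $\infty$-groupoids encoding the homotopy equivalence $|F|$. All of the genuinely new work is thus concentrated in the comma identification above, where the $2$-categorical and marked features of the problem are brought to bear.
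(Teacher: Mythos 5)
Your strategy founders on the displayed identification $L_\mathcal{W}\bigl(\Nerv_2(\CC_{d\downslash})^\dagger\bigr)\simeq d\downarrow F_{\mathcal{W}}$, which you correctly isolate as the heart of the argument but which is not merely delicate: it is false in general. Already for $1$-categories with all morphisms marked it fails. Take $\mathcal{C}=\{a,b\}$ discrete, $\mathcal{D}=[1]$, and $F$ the inclusion of the two objects. At $d=1$ the slice $\mathcal{C}_{1/}$ is a single point, so its localization is contractible and the marked morphism $\id_{F(b)}$ is initial in it; but $F_{\mathcal{W}}$ is the map $S^0\to \ast$ of $\infty$-groupoids, whose comma $1\downarrow F_{\mathcal{W}}$ is $S^0$ and has no initial object. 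So the inference from hypothesis (2) to ``$d\downarrow F_{\mathcal{W}}$ has an initial object'' is invalid, and the adjoint functor criterion cannot be invoked. (This example does not contradict the theorem, since hypothesis (2) fails at $d=0$; it refutes only the asserted identification and the deduction you base on it.) The failure is structural rather than technical: identifying the localization of the slice with the slice of the localization is a Theorem-B--type statement requiring additional hypotheses, and if it held unconditionally then already the classical Theorem A would collapse to Whitehead's theorem. The entire content of a Theorem-A--type result is that one obtains the conclusion \emph{without} knowing that the slices compute the comma categories or homotopy fibers of the localized functor.

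The paper's proof is engineered precisely to sidestep this point. Instead of passing to commas over $L_\mathcal{W}(\DD^\dagger)$, it assembles the localized lax slices themselves into a locally Cartesian fibration $\widetilde{\cchi}_{\DD}(\widehat{\mathfrak{C}}_{\DD\downslash})\to\Nerv_2(\DD)$ over the \emph{unlocalized} nerve, whose fibres are by construction the $\infty$-categories $L_\mathcal{W}(\CC^\dagger_{d\downslash})$ appearing in the hypotheses; a locally Cartesian refinement of \cite[Prop.~2.4.4.9]{HTT} (\autoref{prop:locCartTrivKanFib}) then yields a section through the fibrewise initial objects, from which the weak inverse to $F_{\mathcal{W}}$ is extracted. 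The remaining work, for which your proposal has no counterpart, is the combinatorial identification of the composite $\rho_{\CC}\circ s_{\CC}$ with the localization map (\autoref{cor:sectiongivesID}), carried out via the partial collapses $\on{P}^i\OO^n$. Your closing bookkeeping (unit invertible because marked morphisms are inverted; counit handled by comparing with $\id_{F(c)}$) does survive in spirit, as it mirrors the comparison of initial sections in the proof of \autoref{thm:thebigone}, but it cannot be reached until the foundation is replaced.
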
 

We will actually prove \autoref{thm:AdagIntro} as a corollary of another, seemingly more general result, \autoref{thm:thebigone}. It will turn out that these two theorems are, in fact, equivalent. In homotopy-theoretic situtations, \autoref{thm:AdagIntro} will tend to be more computationally tractable, and has clearer connections to established ideas in the literature. However, \autoref{thm:thebigone} has a compelling connection to notions of cofinality in 2-categories, leading us to the \emph{cofinality conjecture}, which will be discussed both at the end of the introduction, and in the final section of this paper.

The proof that our conditions are sufficient is quite straightforward, and offers an abstract way to construct a weak inverse to $F_\mathcal{W}$. The only price of this directness is that the proof relies on established $(\infty,2)$-categorical technology --- in particular the relative 2-nerve construction and locally Cartesian fibrations of simplicial sets. 

\subsection*{Applications}
\addcontentsline{toc}{subsection}{Applications}

Since \autoref{thm:AdagIntro} involves generalizations of the classical Theorem A in two ways --- introducing both 2-categories and a choice of marked morphisms to the picture --- it is unsurprising that there are a number of special cases which themselves constitute interesting generalizations of Quillen's Theorem A. The first of these forgets one of the generalizations --- that to strict 2-categories --- and focuses only on the marked morphisms. In this context, \autoref{thm:AdagIntro} immediately reduces to:

\begin{theorem}\label{thm:Adag1Cat}
	Let $\func{F:\mathcal{C}^\dagger\to\mathcal{D}^\dagger}$ be a functor between marked categories such that, for all $d\in \mathcal{D}^\dagger$, 
	\begin{itemize}
		\item there exists $c\in\mathcal{C}^\dagger$ and a marked morphism $\begin{tikzcd}
		d\arrow[r,circled] & F(c)
		\end{tikzcd}$
		\item every marked morphism $\begin{tikzcd}
		d\arrow[r,circled] & F(c)
		\end{tikzcd}$ represents an initial object in the localization $L_{\mathcal{W}}(\mathcal{C}_{d/})$.
	\end{itemize}
	Then $F$ induces an equivalence on $\infty$-categorical localizations $\func{F_{\mathcal{W}}:L_{\mathcal{W}}(\mathcal{C})\to[\simeq] L_{\mathcal{W}}(\mathcal{D})}$. 
\end{theorem}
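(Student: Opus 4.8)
The plan is to deduce this statement directly from \autoref{thm:AdagIntro} by regarding every marked $1$-category as a marked $2$-category in which the only $2$-morphisms are identities. Under this \emph{locally discrete} embedding, a functor $F\colon \mathcal{C}^\dagger \to \mathcal{D}^\dagger$ of marked $1$-categories becomes a $2$-functor of marked $2$-categories, and the markings are transported verbatim. It then suffices to check that each of the $2$-categorical ingredients appearing in \autoref{thm:AdagIntro} collapses to its classical $1$-categorical counterpart, so that both the hypotheses and the conclusion translate into exactly the statements recorded here.

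Concretely, I would verify two compatibilities. First, for a $1$-category $\mathcal{C}$ viewed as a locally discrete $2$-category, the Duskin $2$-nerve $\Nerv_2(\mathcal{C})$ agrees with the ordinary nerve $N(\mathcal{C})$: an $n$-simplex of the Duskin nerve records a string of composable $1$-morphisms together with coherence $2$-cells, and when all $2$-cells are forced to be identities this datum is precisely a string of composable morphisms. This identification is compatible with markings, so $\Nerv_2(\mathcal{C}^\dagger)$ coincides with $N(\mathcal{C})$ equipped with the edge-marking determined by $\mathcal{W}$, and hence $L_{\mathcal{W}}(\Nerv_2(\mathcal{C}^\dagger)) \simeq L_{\mathcal{W}}(\mathcal{C})$. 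Second, I would check that the lax slice $\mathcal{C}_{d\downslash}$ reduces to the ordinary coslice $\mathcal{C}_{d/}$: a morphism in the lax slice is a morphism of $\mathcal{C}$ together with a filling $2$-cell of the relevant triangle, and since the only available $2$-cells are identities, the triangle is forced to commute strictly. The slice is therefore again locally discrete and equals $\mathcal{C}_{d/}$, whence $\Nerv_2(\mathcal{C}_{d\downslash})^\dagger \isom N(\mathcal{C}_{d/})$ with its induced marking and $L_{\mathcal{W}}(\Nerv_2(\mathcal{C}_{d\downslash})^\dagger) \simeq L_{\mathcal{W}}(\mathcal{C}_{d/})$.

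With these identifications in hand, the translation is mechanical. Condition (1) of \autoref{thm:AdagIntro} --- the existence of an object $c$ and a marked morphism $d \to F(c)$ --- is literally the first bullet above, and condition (2) --- that every such marked morphism be initial in $L_{\mathcal{W}}(\Nerv_2(\mathcal{C}_{d\downslash})^\dagger)$ --- becomes, under the identification of the previous paragraph, the assertion that it represents an initial object of $L_{\mathcal{W}}(\mathcal{C}_{d/})$, i.e. the second bullet. The conclusion of \autoref{thm:AdagIntro} that $F_{\mathcal{W}}\colon L_{\mathcal{W}}(\Nerv_2(\mathcal{C}^\dagger)) \to L_{\mathcal{W}}(\Nerv_2(\mathcal{D}^\dagger))$ is an equivalence then reads off as the claimed equivalence $F_{\mathcal{W}}\colon L_{\mathcal{W}}(\mathcal{C}) \to L_{\mathcal{W}}(\mathcal{D})$.

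I expect the only genuine point of care --- and the main (if modest) obstacle --- to be the second compatibility: one must confirm that the \emph{lax} slice of a locally discrete $2$-category introduces no additional non-invertible morphisms or $2$-cells beyond those of the strict coslice, so that no spurious structure can alter the localization. Once the definition of $\mathcal{C}_{d\downslash}$ is unwound this is immediate, since every filling $2$-cell is constrained to be an identity; the remainder of the argument is a direct invocation of \autoref{thm:AdagIntro}.
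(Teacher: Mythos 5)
Your proposal is correct and matches the paper's own treatment: the paper derives \autoref{thm:Adag1Cat} precisely as the specialization of \autoref{thm:AdagIntro} to $1$-categories regarded as locally discrete $2$-categories, with the Duskin $2$-nerve collapsing to the ordinary nerve and the lax slice to the ordinary coslice. The paper leaves these compatibilities implicit (and separately sketches how a standalone $1$-categorical proof would simplify), whereas you spell them out, which is exactly the right thing to check.
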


Quillen's Theorem A then amounts to the special case in which all morphisms in $\mathcal{C}$ and $\mathcal{D}$ are marked. The localization $L_{\mathcal{W}}(\mathcal{C})$ can then be identified with the Kan-Quillen fibrant replacement, and so the conclusion of the theorem reduces to an equivalence of spaces.

There are also a number of results from the more recent literature that can be retrieved as special cases of \autoref{thm:AdagIntro}. In particular, if we instead remember the generalization to 2-categories, but neglect the generalization to marked morphisms (by considering every morphism to be marked), we obtain the following theorem of Bullejos and Cegarra from \cite{Bullejos_QuillenA} :

\begin{theorem}[Bullejos and Cegarra]\label{thm:IntroBC}
	Let $\func{F:\CC\to \DD}$ be a 2-functor. Suppose that, for every $d\in \DD$, there is a homotopy equivalence $|\Nerv_2(\CC_{d\downslash})|\simeq \ast$. Then $\func{|F|:|\Nerv_2(\CC)|\to |\Nerv_2(\DD)|}$ is a homotopy equivalence. 
\end{theorem}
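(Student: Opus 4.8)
The plan is to obtain this as the special case of Theorem~A$^\dagger$ (\autoref{thm:AdagIntro}) in which every $1$-morphism is declared to be a weak equivalence. First I would equip both $\CC$ and $\DD$ with the \emph{maximal} marking, so that $\mathcal{W}$ consists of all $1$-morphisms; then $F$ is tautologically a functor of $2$-categories with weak equivalences, and the induced marking on each lax slice $\CC_{d\downslash}$ is again maximal. The point of this choice is that the marked morphisms $d \to F(c)$ appearing in the hypotheses of \autoref{thm:AdagIntro} become \emph{all} morphisms $d \to F(c)$, i.e.\ exactly the objects of $\CC_{d\downslash}$, so the marked refinement collapses onto a purely homotopical statement.

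Next I would identify the localizations appearing in \autoref{thm:AdagIntro} with classifying spaces. When every morphism is marked, $L_\mathcal{W}$ inverts all morphisms, so $L_\mathcal{W}(\Nerv_2(-))$ is the $\infty$-groupoid completion of the Duskin nerve; as an object of spaces this is precisely the geometric realization $\lvert\Nerv_2(-)\rvert$. Applying this to $\CC$, to $\DD$, and to each slice $\CC_{d\downslash}$, the conclusion of \autoref{thm:AdagIntro} --- that $F_\mathcal{W}$ is an equivalence of $\infty$-categories --- reads as the statement that $\lvert F\rvert \colon \lvert\Nerv_2(\CC)\rvert \to \lvert\Nerv_2(\DD)\rvert$ is an equivalence of $\infty$-groupoids, equivalently a homotopy equivalence of spaces. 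This is exactly the desired conclusion, so it only remains to verify the hypotheses.

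For the hypotheses I would invoke the reformulation recorded in the introduction: a non-empty Kan complex is contractible if and only if each of its objects is initial, when regarded as an $\infty$-groupoid. Fix $d \in \DD$ and write $K \simeq L_\mathcal{W}(\Nerv_2(\CC_{d\downslash})^\dagger) \simeq \lvert\Nerv_2(\CC_{d\downslash})\rvert$. The Bullejos--Cegarra hypothesis $\lvert\Nerv_2(\CC_{d\downslash})\rvert \simeq \ast$ says $K$ is contractible. In particular $K$ is non-empty, which produces an object of $\CC_{d\downslash}$ and hence a (marked) morphism $d \to F(c)$; this is condition~(1) of \autoref{thm:AdagIntro}. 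Contractibility also makes every object of $K$ initial, and since in the maximal marking every morphism $d \to F(c)$ \emph{is} an object of $\CC_{d\downslash}$, this is precisely condition~(2). Both hypotheses hold, and \autoref{thm:AdagIntro} delivers the result.

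Since all of the genuine content is packaged inside \autoref{thm:AdagIntro}, which we are assuming, the argument is a specialization plus two bookkeeping identifications, and I expect these to be the only points needing care. The main one is (a), the claim that localizing an all-marked $2$-category at its weak equivalences recovers the classifying space $\lvert\Nerv_2(-)\rvert$; this hinges on the compatibility of $L_\mathcal{W}$ with the Duskin nerve and with $\infty$-groupoid completion, and must be checked uniformly for $\CC$, $\DD$, and all the slices at once. The secondary point is (b), that the maximal marking is inherited by $\CC_{d\downslash}$, so that condition~(2) genuinely ranges over \emph{every} morphism $d \to F(c)$ rather than a proper subclass; without this the translation of contractibility into condition~(2) would fail.
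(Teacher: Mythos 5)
Your proposal follows exactly the route the paper itself takes (its \autoref{prop:BullejosRealization}): equip $\CC$ and $\DD$ with the maximal marking, identify the resulting localizations with the classifying spaces $|\Nerv_2(\CC)|$ and $|\Nerv_2(\DD)|$, and verify the two hypotheses of \autoref{cor:equivtobig} (equivalently \autoref{thm:AdagIntro}) from non-emptiness and contractibility of the slices. The paper's proof is essentially a three-line version of yours, so in structure and in the key identifications the two arguments coincide.

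One point needs correcting, however: your assertion that the induced marking on each lax slice $\CC_{d\downslash}$ is again maximal is not what \autoref{def:markedslice} says. There, an edge of $\CC^\dagger_{d\downslash}$ is marked iff its underlying $1$-morphism is marked \emph{and} the filling $2$-morphism is invertible; with the maximal marking on $\CC$ this picks out only the invertible-$2$-morphism edges, a proper subclass whenever $\CC$ has non-invertible $2$-morphisms. Consequently $L_{\mathcal{W}}(\Nerv_2(\CC_{d\downslash})^\dagger)$ is a priori an $\infty$-category rather than the space $|\Nerv_2(\CC_{d\downslash})|$, and your identification (a), as applied to the slices, requires an extra argument before the equivalence ``contractible non-empty $\Leftrightarrow$ every object initial'' can be invoked. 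To be fair, the paper's own proof is equally terse here, declaring the criteria ``immediate'' from contractibility of the slices; but since your writeup makes the (incorrect) maximality claim explicit and leans on it, this is the one step you should repair rather than assert.
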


Finally, there is also a criterion of Walde from \cite{Walde}, which checks when an $\infty$-categorical localization of a 1-category yields a 1-category as output. This is a special case of \autoref{thm:Adag1Cat} where the marking on the target 1-category consists of the equivalences. 

We will prove each of these corollaries, as well as some related criteria, in \autoref{sec:coraps}. However, it is quite informative to consider the 1-categorical case \autoref{thm:Adag1Cat} on its own, as it shares many of the salient features of the proof of the full 2-categorical case, but without many of the technical combinatorial complexities.

We will not present a separate proof of \autoref{thm:Adag1Cat} here, as doing so would have the effect of doubling many arguments unnecessarily. The interested reader can easily reconstruct the proof from the proof of \autoref{thm:AdagIntro}. Such a reconstruction is also rendered simpler by the disappearance of some technical, 2-categorical facets of the proof:
\begin{itemize}
	\item In the 1-categorical case, one need not take into account any convention for 2-morphisms. In particular, this obviates the need to work with lax overcategories or the Duskin nerve. 
	\item One can work with the relative 1-nerve $\chi$ of \cite{HTT} rather than the relative 2-nerve from \cite{ADS2Nerve}.
	\item Since the slices $\mathcal{C}_{d/}$ are 1-categories, we immediately get a Cartesian fibration, instead of having to first pass through passing through $L_{\mathcal{W}}$. 	
	\item The explicit section $s_{\mathcal{C}}$ constructed in \autoref{sec:sec} can, in the 1-categorical case, be written down immediately. As a result, the technology of Quillen adjunctions developed in \autoref{sec:QA} is unnecessary in this case. 
	\item The functor constructed from the section $s_{\mathcal{C}}$ is equal to the localization map of $\mathcal{C}$ on the nose, rendering the combinatorial argument relating the two in \autoref{sec:sec} moot in the 1-categorical case.
\end{itemize}

The technical difficulties which arise in the full 2-categorical proof are mostly due to the relative dearth of genuine $(\infty,2)$-categorical technology, as compared to the $(\infty,1)$-case. We expect that in the presence of a $(\infty,2)$-Grothendieck construction relating functors into $\Cat_{(\infty,2)}$ and marked-scaled Cartesian fibrations the arguments presented here would simplify greatly. It is worth noting that we expect the construction $\widetilde{\cchi}_{\DD}$ presented here to be the relative nerve construction corresponding to such a `genuine' $(\infty,2)$-Grothendieck construction.

\section*{Relation to cofinality}
\addcontentsline{toc}{subsection}{Relation to cofinality}
In its original form (\cite{QuillenK}) Theorem A is concerned with the homotopy-theoretic properties of functors between ordinary 1-categories. It would be only after the blossoming of homotopy coherent mathematics (i.e. model-category theory and  $\infty$-category theory) that the same statement could be more generally interpreted in terms of preservation of $\infty$-colimits. In this more modern framework one can recover that original result of Quillen by noting that the $\infty$-colimit of the constant point-valued functor
\[
\func*{\underline{*}: C \to \on{Top}}
\]
is the geometric realization of $C$. One would naturally expect that \autoref{thm:AdagIntro} follows the same pattern with a suitably categorified notion of colimits.

This paper can be considered as a first step in a longer program dedicated to a categorification of the cofinality criterion of Quillen's Theorem A. In \autoref{sec:cofinality} we explore the notion of \emph{marked colimits} in the setting of strict 2-categories, obtaining a decategorified cofinality criterion \autoref{thm:decatAdag}. This later result coupled with \autoref{thm:buckleycomputes} will then yield a strict version of the main result of this paper. 

A feature of great interest in both \autoref{thm:AdagIntro} and \autoref{thm:decatAdag} is that neither is merely a vertical categorification of Quillen's Theorem A. To pass one rung higher on the ladder of categorification, it is necessary to add structure in two directions: (1) categorical structure in the form of non-invertible 2-morphisms and (2) homotopical structure in the form of a chosen set of marked morphisms. The latter has a profound impact on the definition of marked colimits appearing in this paper. The notion of marked colimit is, as the name implies, highly sensitive to the marking on the diagram 2-category --- so much so, in fact, that even operations on the marking which do not change $\infty$-categorical localization (e.g. taking saturations) do not preserve marked colimits. 

This facet of the developing theory is not surprising, given that even in the $(\infty,1)$-context, a functor defining an equivalence of spaces is a far weaker condition than the same functor being cofinal. Every inclusion of an object into a category with a terminal object induces an equivalence of spaces, but such an inclusion is only cofinal when it selects a terminal object in the target category. As we develop the twinned notions of marked colimit and marked cofinality, we take care to comment on this sensitivity to marking, and to connect marked cofinality with the appropriate choice of hypotheses in the generalization of Theorem A. 

We conclude this work with a discussion of conjectures and open questions pointing towards a genuine theory of $(\infty,2)$-marked colimits. In particular, we propose the \emph{cofinality conjecture}, which posits a relation between a theory of $(\infty,2)$-marked colimits and the hypotheses of \autoref{thm:thebigone}. The results of this paper, in addition to their independent utility in computing $(\infty,1)$-categorical localizations of 1- and 2-categories, provide compelling evidence that a form of the cofinality conjecture should hold once the necessary technology has been developed.  

\section*{Acknowledgements}
The authors would like to thank Prof. Tobias Dyckerhoff for his guidance and support throughout this process. F.A.G. acknowledges the support of the VolkswagenStiftung through the Lichtenberg
Professorship Programme.

\newpage 
\section{Preliminaries}

In this section, we will go over the notations, definitions, and background necessary for our constructions and proofs. We focus for the most part on 2-categorical background, directing readers in need of higher-categorical and model-categorical preliminaries to \cite{HTT}, \cite{Cisinski}, \cite{Lurie_Goodwillie}, and \cite{ADS2Nerve}.

\subsection{2-categories and the 2-nerve}

\begin{notation} \glsadd{CCopop}
	By a \emph{2-category}, we will always mean a strict 2-category. By a \emph{2-functor}, we will mean a strict 2-functor unless specified otherwise. For a 2-category $\CC$, we will denote the 1-morphism dual by $\CC^{(\op,-)}$, the 2-morphism dual by $\CC^{(-,\op)}$, and the dual which reverses both 1- and 2-morphisms by $\CC^{(\op,\op)}$. We will denote the (1-)category of 2-categories and (strict) 2-functors by $2\!\Cat$.  
\end{notation}

\begin{definition}
	Let $\CC$ and $\DD$ be 2-categories. A \emph{normal lax 2-functor} (which we will sometimes refer to as simply a \emph{lax functor}) $\func{F:\CC\to \DD}$ consists of the data: 
	\begin{itemize}
		\item A map $\func{F:\on{Ob}(\CC)\to \on{Ob}(\DD)}$ on objects. 
		\item For each pair of objects $b,c\in \CC$, a functor 
		\[
		\func{F_{b,c}:\CC(b,c)\to \CC(F(b),F(c))}.
		\]
		\item For each triple of objects $a,b,c\in \CC$, a natural transformation 
		\[
		\begin{tikzcd}
		\CC(b,c)\times \CC(a,b)\arrow[r,"\circ"]\arrow[d,"F"'] & \CC(a,c)\arrow[d,"F"] \arrow[dl,Rightarrow ,"\sigma"]\\
		\DD(F(b),F(c))\times \DD(F(a),F(b))\arrow[r,"\circ"'] & \DD(F(a),F(c))
		\end{tikzcd}  
		\]
		called the \emph{compositor} of $F$. 
	\end{itemize}
	subject to the conditions 
	\begin{enumerate}
		\item $F_{c,c}(\id_c)=\id_{F(c)}$ for all $c\in \CC$. 
		\item $\sigma_{f,\id}=\id_{F(f)}$ and $\sigma_{\id,f}=\id_{F(f)}$. 
		\item the compositors satisfy the hexagon identity. 
	\end{enumerate}
	We denote by $\Lcat$ the (1-)category of 2-categories with normal lax functors as morphisms. 
\end{definition}

\begin{definition}
	Composing the cosimplicial object 
	\[
	\func*{\Delta^\bullet:\Delta\to \Cat; 
		[n]\mapsto [n]}
	\]
	with the inclusion $\func{\Cat\to \Lcat}$ yields a cosimplicial object in $\Lcat$, which we will also denote by $\Delta^\bullet$ in a slight abuse of notation. Using this cosimplicial object, we obtain a functor
	\[
	\func{\Nerv_2: \Lcat\to \Set_\Delta} 
	\]
	with $\Nerv_2(\CC)_n=\Lcat([n],\CC)$. We call this functor the \emph{(Duskin) 2-nerve}. \glsadd{N2}
\end{definition}

\begin{definition}\label{defn:OI}
	Let $I$ be a linearly ordered finite set. We define a $2$-category $\OO^{I}$ as
	follows
	\begin{itemize}
		\item the objects of $\OO^I$ are the elements of $I$,
		\item the category $\mathbb{O}^{I}(i,j)$ of morphisms between objects $i,j \in I$ is defined
		as the poset of finite sets $S \subseteq I$ such that $\min(S)=i$ and $\max(S)=j$
		ordered by inclusion,
		\item the composition functors are given, for $i,j,l\in I$, by
		\[
		\mathbb{O}^{I}(i,j) \times \mathbb{O}^{I}(j,l) \to \mathbb{O}^{I}(i,l), \quad (S,T) \mapsto S \cup T.
		\]
	\end{itemize}
	When $I=[n]$, we denote $\OO^I$ by $\OO^n$. Note that the $\OO^n$ form a cosimplicial object in $2\!\Cat$, which we denote by $\OO^\bullet$. \glsadd{OI}
\end{definition}

\begin{construction}
	We abuse notation and also denote by $\OO^\bullet$ the cosimplicial object 
	\[
	\begin{tikzcd}
	\Delta\arrow[r,"\OO^\bullet"] & 2\!\Cat \arrow[r] & \Lcat. 
	\end{tikzcd}
	\]
	For each $n\in \NN$, we can define a lax functor 
	\[
	\func{\xi_n:[n]\to \OO^n}
	\]
	defined as the identity on objects. On 1-morphisms we set $\xi_n(i\leq j)=\{i,j\}$. Since the $\Hom$-categories in $\OO^n$ are posets, this completely determines the compositors. 
\end{construction}

We now summarize some useful properties of the Duskin 2-Nerve:

\begin{proposition}
	Let $\CC$ be a 2-category.
	\begin{enumerate}
		\item The functor $\func{\Nerv_2:\Lcat\to \Set_\Delta}$ is fully faithful. 
		\item The lax functors $\xi_n:[n]\to \OO^n$ form a natural transformation of cosimplicial objects. 
		\item For every $n\in \NN$ the lax functor $\xi_n:[n]\to \OO^n$ induces a bijection 
		\[
		2\!\Cat(\OO^n,\CC)\cong \Lcat([n],\CC).
		\]
	\end{enumerate}
\end{proposition}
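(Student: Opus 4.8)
The plan is to dispatch the three claims in the order (2), (3), (1): the naturality statement (2) is an elementary check, the bijection (3) builds the machinery that makes the low-dimensional simplices of $\Nerv_2$ transparent, and (1) then follows by reconstructing a lax functor from the low-dimensional data of a simplicial map.

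For (2), I would verify commutativity of the square
\[
\begin{tikzcd}
{[m]} \arrow[r,"\xi_m"] \arrow[d,"\alpha"'] & \OO^m \arrow[d,"\OO^\alpha"] \\
{[n]} \arrow[r,"\xi_n"'] & \OO^n
\end{tikzcd}
\]
for each $\alpha\colon[m]\to[n]$ in $\Delta$, where both composites are lax functors (being composites of the strict $\alpha,\OO^\alpha$ with the lax $\xi$'s). On objects both send $i$ to $\alpha(i)$. On a $1$-morphism $i\leq j$ the upper path gives $\OO^\alpha(\{i,j\})=\{\alpha(i),\alpha(j)\}$, while the lower path gives $\xi_n(\alpha(i)\leq\alpha(j))=\{\alpha(i),\alpha(j)\}$, using that $\alpha$ is monotone. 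Since every $\Hom$-category of $\OO^m,\OO^n$ is a poset, there is at most one $2$-cell between parallel $1$-morphisms, so the compositors of the two composite lax functors coincide automatically and no further checking is needed.

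For (3), I would construct an explicit inverse to the precomposition map $G\mapsto G\circ\xi_n$. Given a normal lax functor $L\colon[n]\to\CC$ with object values $L(i)$, edge values $L_{ij}\colon L(i)\to L(j)$, and compositors, I define a strict $2$-functor $\widehat{L}\colon\OO^n\to\CC$ by $\widehat{L}(i)=L(i)$ and, on a $1$-morphism $S=\{s_0<\dots<s_k\}$, by the iterated composite $\widehat{L}(S)=L_{s_{k-1}s_k}\circ\dots\circ L_{s_0 s_1}$; since composition in $\OO^n$ is union of chains, this is strictly compatible with composition by construction. On a generating inclusion $S\subseteq T$, which inserts finitely many vertices into the gaps of $S$, I whisker the compositors of $L$ into the corresponding slots to produce a $2$-cell $\widehat{L}(S)\Rightarrow\widehat{L}(T)$ (the orientation being exactly the one fixed by the paper's compositor convention). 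The crux of (3), and the step I expect to be the main obstacle, is checking that $\widehat{L}$ is well defined and functorial on $2$-cells: the $2$-cell attached to $S\subseteq T$ must be independent of the order in which gaps are filled, and must be compatible with composites $S\subseteq T\subseteq U$. Independence across distinct gaps is an instance of the interchange law in $\CC$, while independence within a gap and compatibility along $S\subseteq T\subseteq U$ are precisely the hexagon/associativity coherence imposed on $L$. Once this is in hand, the identities $\widehat{L}\circ\xi_n=L$ (immediate on objects and edges, and on compositors because the compositor of $\xi_n$ is the elementary inclusion $\{i,k\}\subseteq\{i,j,k\}$, which $\widehat{L}$ sends to the compositor of $L$) and $\widehat{G\circ\xi_n}=G$ (because a strict $2$-functor is determined by its values on objects, on the two-element subsets, and on the elementary inclusions, all of which generate $\OO^n$) show the two assignments are mutually inverse.

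For (1), I would prove full faithfulness by reconstruction. Faithfulness is immediate: a lax functor $F\colon\CC\to\DD$ is determined by its action on objects, $1$-cells, $2$-cells and compositors, and these are recorded by $\Nerv_2(F)$ in simplicial degrees $0,1,2$ (a $2$-cell $g\Rightarrow g'$ being encoded as the $2$-simplex with edges $g,\id,g'$ and compositor the given $2$-cell), so $\Nerv_2(F)=\Nerv_2(F')$ forces $F=F'$. For fullness, from a simplicial map $\phi\colon\Nerv_2(\CC)\to\Nerv_2(\DD)$ I read off candidate data for a lax functor $F$ from $\phi_0,\phi_1,\phi_2$ as above. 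The main obstacle is verifying that this data satisfies the normal lax functor axioms and that $\phi=\Nerv_2(F)$: functoriality of $F$ on each $\Hom$-category and naturality of the compositor follow from compatibility of $\phi_2$ with the face and degeneracy maps in degrees $\leq 2$, while the hexagon identity follows from compatibility of $\phi_3$ with the four faces $\Nerv_2(\CC)_3\to\Nerv_2(\CC)_2$. That $\phi=\Nerv_2(F)$ in all degrees then follows because an $n$-simplex of $\Nerv_2(\CC)$ — a lax functor $[n]\to\CC$ — is determined by its $0$-, $1$- and $2$-dimensional faces subject to the degree-$3$ coherences, so agreement on the low skeleton propagates to all degrees. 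I expect this combinatorial bookkeeping, essentially Duskin's nerve theorem in the present normal-lax setting, to be the most laborious part; the identification $\Nerv_2(\CC)_n\cong 2\!\Cat(\OO^n,\CC)$ from (3) can be used throughout to keep the low-dimensional simplices concrete.
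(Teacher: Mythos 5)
Your proposal is correct in outline, but it is worth noting that the paper does not actually prove this proposition: its ``proof'' is a one-sentence deferral to the literature (Bullejos--Blanco--Faro and Lurie's Kerodon for parts (1) and (3), and ``easily checked by hand'' for part (2)). What you have written is essentially a reconstruction of the standard arguments contained in those references, so in spirit you and the paper agree; the difference is only that you supply the content the paper outsources. Your treatment of (2) is complete as stated --- the observation that poset-enriched hom-categories force the compositors of the two composite lax functors to coincide is exactly the right shortcut. For (3), your inverse construction $L\mapsto\widehat{L}$ (iterated composites on chains, whiskered compositors on elementary inclusions) is the standard one, and you have correctly identified where the real work lies: independence of the order of gap-filling via interchange, and compatibility along $S\subseteq T\subseteq U$ via the hexagon identity; the two triangle identities $\widehat{L}\circ\xi_n=L$ and $\widehat{G\circ\xi_n}=G$ then follow as you say, since the two-element subsets and elementary inclusions generate $\OO^n$. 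For (1), the reconstruction of a lax functor from the $\leq 3$-skeleton of a simplicial map, together with the fact that a simplex of $\Nerv_2(\CC)$ is determined by its $2$-skeleton subject to degree-$3$ coherences, is precisely the Duskin/Bullejos--Blanco--Faro argument. The only caution is that the two coherence verifications you flag (order-independence in (3), the hexagon from $\phi_3$ in (1)) are genuinely the bulk of a complete proof and would need to be written out to turn this sketch into one; nothing in your plan would fail in doing so.
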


\begin{proof}
	See \cite{Bullejos_2Nerve} or \cite{kerodon} for the first statement. The second can be easily checked by hand, and the third is, e.g., \cite[2.3.6.7]{kerodon}.
\end{proof}

\begin{remark}
	The proposition above implies, in particular, that there are two ways of viewing a simplex $\func{\sigma:\Delta^n\to \Nerv_2(\CC)}$. We can either view it as a strict functor $\func{\OO^n\to\CC}$ or a normal lax functor $\func{[n]\to \CC}$. Note that, passing to the 2-nerve of the latter gives precisely the original inclusion $\func{\Delta^n\to \Nerv_2(\CC)}$. We will make extensive use of both conventions in our constructions. 
\end{remark}

\begin{remark}
	For any 2-category $\CC$, one can equip $\Nerv_2(\CC)$ with the additional structure of a scaling (see, e.g. \cite{ADS2Nerve} for details). The resulting functor $\func{2\!\Cat\to \Set_\Delta^{\on{sc}}}$ will be denoted by $\Nerv^{\on{sc}}$, and referred to as the \emph{scaled nerve}.
\end{remark}

\subsection{Lax slices and functoriality}

\begin{definition}\label{def:laxslice}\glsadd{lslice}
	Let $\CC$ be a 2-category, and $c\in \CC$. We define a 2-category $\CC_{c\downslash}$, the \emph{lax slice category}, as follows. The objects of $\CC_{c\downslash}$ are morphisms $\func{f:c\to d}$ in $\CC$. A morphism in $\CC_{c\downslash}$ from $\func{f:c\to d}$ to $\func{g:c\to b}$ consists of a morphism $\func{u:d\to c}$ in $\CC$ together with a 2-morphism $\func{\beta:g\nat u\circ f}$. A 2-morphism in $\CC_{c\downslash}$ from $(u,\beta)$ to $(v,\alpha)$ consists of a 2-morphism $\func{\nu:u\nat v}$ such that the induced diagram 
	\[
	\begin{tikzcd}[row sep =0.5em]
	& u\circ f\arrow[dd,Rightarrow] \\
	g\arrow[ur,Rightarrow,"\beta"]\arrow[dr,Rightarrow,"\alpha"'] &  \\
	& v\circ f
	\end{tikzcd}
	\]
	commutes. 
	
	We define the \emph{oplax slice category} $\CC_{c\upslash}$ to be the 2-category $((\CC^{(-,\op)})_{c\downslash})^{(-,\op)}$. This amounts to simply reversing the convention for the direction of the 2-morphisms filling the triangles in the morphisms. 
\end{definition}

\begin{remark}
	We here warn the reader that conventions related to the 2-morphism dual of a 2-category need to be carefully calibrated when dealing with the 2-nerve. The reason for this is that, since the 2-nerve encodes 2-morphisms as 2-simplices $\func{h\nat f\circ g}$, it enforces a choice of convention. It is not easy to define something like the 2-morphism dual of a scaled simplicial set (in contrast to the 1-categorical case, where the 1-morphism dual merely corresponds to the opposite simplicial set). 
	
	The convention forced by the 2-nerve is why, in the above discussion, we defined the compositors of lax functors to point in the direction we did. This convention is also the reason that our higher-categorical proofs will usually involve the slice categories $\CC_{c\downslash}$, rather than $\CC_{c\upslash}$. 
\end{remark}

\begin{proposition}
	We denote by $\Lcat_\ast$ the (strictly) pointed version of $\Lcat$. The assignment 
	\[
	\func*{\Lcat_\ast \to \Lcat; (\CC,c)\mapsto \CC_{c\downslash}}
	\]
	defines a functor. 
\end{proposition}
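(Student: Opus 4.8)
The plan is to construct the action of the functor on morphisms explicitly and then to verify the two functoriality axioms by reducing each to a coherence already present in the definition of a normal lax functor. Throughout I keep the paper's orientation of the compositor, $\sigma_{g,f}\colon F(g\circ f)\nat Fg\circ Ff$, as forced by the $2$-nerve. Given a strictly pointed lax functor $F\colon(\CC,c)\to(\DD,d)$, so that $F(c)=d$, I define an induced normal lax functor $F_\downslash\colon\CC_{c\downslash}\to\DD_{d\downslash}$ as follows. On objects, $f\colon c\to x$ is sent to $Ff\colon d\to Fx$; here the strictness of the pointing is exactly what guarantees that the target is again an object of $\DD_{d\downslash}$. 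On a $1$-morphism $(u,\beta)\colon f\to g$ (with $u\colon x\to y$ and $\beta\colon g\nat u\circ f$) I set $F_\downslash(u,\beta)=(Fu,\hat{\beta})$ with
\[
\hat{\beta}=\sigma_{u,f}\circ F(\beta)\colon Fg\nat F(u\circ f)\nat Fu\circ Ff,
\]
and on a $2$-morphism $\nu\colon(u,\beta)\nat(v,\alpha)$ I simply take $F\nu\colon Fu\nat Fv$.

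Next I would verify that $F_\downslash$ is a normal lax functor. That $F\nu$ is a legitimate $2$-morphism of $\DD_{d\downslash}$, i.e.\ that $(F\nu\ast Ff)\circ\hat{\beta}=\hat{\alpha}$, is precisely the naturality of $\sigma$ in its first variable whiskered by $f$, combined with $F$ preserving the triangle defining $\nu$; functoriality of the hom-assignment in $2$-morphisms is then inherited from that of the hom-functors of $F$. Normality, $F_\downslash(\id_f)=\id_{Ff}$, follows from $F(\id_x)=\id_{Fx}$ together with the unit axiom $\sigma_{\id,f}=\id$. For the compositor of $F_\downslash$ at a composable pair $(u,\beta)\colon f\to g$ and $(u',\beta')\colon g\to h$, I take the $2$-cell with underlying $2$-morphism $\sigma_{u',u}\colon F(u'\circ u)\nat Fu'\circ Fu$, and check that it defines a $2$-morphism of $\DD_{d\downslash}$ from $F_\downslash$ of the composite to the composite of the two $F_\downslash$'s; its unit and hexagon axioms then descend from those of $\sigma$. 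Note that $F_\downslash$ is strict exactly when $F$ is, so in general the target of the assignment is indeed $\Lcat$ rather than $2\!\Cat$.

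Then I would establish functoriality of the assignment itself. On objects it is the given rule, and $(\id_{(\CC,c)})_\downslash=\id_{\CC_{c\downslash}}$ is immediate since the identity lax functor has trivial compositor, whence $\hat{\beta}=\beta$. For composition, using the standard formula $\sigma^{GF}_{u,f}=\sigma^{G}_{Fu,Ff}\circ G(\sigma^{F}_{u,f})$ for the compositor of a composite of lax functors, a one-line pasting computation gives $\hat{\beta}^{\,GF}=\sigma^{G}_{Fu,Ff}\circ G(\hat{\beta}^{\,F})$, which is exactly the structure $2$-cell produced by applying $G_\downslash$ to $F_\downslash(u,\beta)$; the identical bookkeeping matches objects, $2$-morphisms, and compositors, giving $(G\circ F)_\downslash=G_\downslash\circ F_\downslash$ on the nose.

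The main obstacle is the coherence bookkeeping in the second step rather than anything conceptual: checking that the candidate compositor $\sigma_{u',u}$ genuinely lands in the lax slice (the triangle identity relating the two ways of pasting $\beta$, $\beta'$ and the compositor) and that the hexagon for $F_\downslash$ reduces to the hexagon for $F$. These amount to diagram chases with whiskering, the interchange law, and naturality of $\sigma$, and they are where the particular $2$-nerve-compatible orientation of the compositors must be tracked with care. One could alternatively sidestep part of this by realizing $\CC_{c\downslash}$ as a lax comma object and invoking its universal property, but given the explicit definitions already in play the direct verification seems the most transparent route.
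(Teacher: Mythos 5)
Your construction of $F_\downslash$ --- objects to $Ff$, a $1$-morphism $(u,\beta)$ to $(Fu,\,\sigma_{u,f}\circ F(\beta))$, $2$-morphisms to $F\nu$, with compositor $\sigma_{u',u}$ --- is exactly the one in the paper, and your identity and composition checks for the assignment $F\mapsto F_\downslash$ match the paper's. The only difference is that where you propose to verify by a direct hexagon-identity diagram chase that the compositor lands in $\DD_{F(c)\downslash}$, the paper instead cites the well-definedness of $\Nerv_2(F)$ on $3$-simplices, which packages the same coherence computation.
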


\begin{proof}
	We start with a functor $F:\CC\to \DD$ which maps $c$ to $F(c)$, and define a lax functor 
	\[
	\func*{F_{\downslash}:\CC_{c\downslash}\to \DD_{F(c)\downslash}}
	\]
	as follows. 
	\begin{itemize}
		\item On objects, we send $\func{f:c\to c_1}$ to $\func{F(f):F(c)\to F(c_1)}$. 
		\item On 1-morphisms we define 
		\[
		\begin{tikzcd}
		& c\arrow[dl,"f_1"']\arrow[dr,"f_2", ""{name=U,inner sep=1pt,below left}] & \\
		|[alias=X]|c_1\arrow[rr,"h"']& &c_2 \arrow[Rightarrow, from=U,
		to=X, shorten <=.3cm,shorten >=.6cm, "\;\;\;\;\;\mu"']
		\end{tikzcd}\mapsto \begin{tikzcd}
		& F(c)\arrow[dl,"F(f_1)"']\arrow[dr,"F(f_2)", ""{name=U,inner sep=1pt,below left}] & \\
		|[alias=X]|F(c_1)\arrow[rr,"h"']& &c_2 \arrow[Rightarrow, from=U,
		to=X, shorten <=.3cm,shorten >=.6cm, pos=0.1, "{\sigma(h,f_1)\circ F(\mu)}"',sloped]
		\end{tikzcd}
		\]
		where $\sigma(h,f_1)$ is the compositor of $F$. 
		\item on 2-morphisms, we simply send $\alpha\mapsto F(\alpha)$. 
	\end{itemize}
	We then note that the well-definedness of $\Nerv_2(F):\Nerv_2(\CC)\to \Nerv_2(\DD)$ on 3-simplices (see \cite{Bullejos_2Nerve}) implies that the compositor for $F$ defines 2-morphism in $\DD_{F(c)\downslash}$ 
	\[
	\func{\sigma(g_2,g_2):F_{\downslash}(g_2\circ g_1)\nat F_{\downslash}(g_2)\circ F_{\downslash}(g_1)} 
	\] 
	(where we abuse notation by denoting a morphism in $\CC_{c\downslash}$ by its projection to $\CC$). Since identities on 2-morphisms in $\DD_{F(c)\downslash}$ can be checked in $\DD$, it is immediate that this defines a normal lax functor. 
	
	The composability and unitality of the assignment $F\mapsto F_{\downslash}$ can then be checked immediately from the definitions.  
\end{proof}

\begin{remark}
	There are forgetful strict 2-functors $\CC_{c\downslash}\to \CC$ defined in the obvious way. When pieced together, these form a natural transformation to the forgetful functor $\Lcat_\ast\to\Lcat$. 
\end{remark}

\subsection{$\infty$-Localizations \& conventions for simplicial sets}

\begin{definition}\glsadd{Cdag}
	A \emph{marked 2-category} is a pair $\CC^\dagger=(\CC,W_{\CC})$ consisting of a 2-category $\CC$ together with a set $W_{\CC}$ of morphisms in $\CC$ containing all identities. A \emph{functor of marked 2-categories}  (\emph{marked 2-functor}) $\func{F:\CC^\dagger \to \DD^\dagger}$ is a 2-functor $\func{F:\CC\to \DD}$ such that $F(W_{\CC})\subset W_{\DD}$. We will denote the category of marked 2-categories with marked functors by $2\!\Cat^\dagger$.   
\end{definition}

\begin{definition}
	A \emph{category with weak equivalences} is a pair $\mathcal{C}^\dagger=(\mathcal{C},\mathcal{W}_\mathcal{C})$ consisting of a 1-category $\mathcal{C}$ and a wide subcategory $\mathcal{W}_{\mathcal{C}}\subset \mathcal{C}$. A \emph{homotopical functor} $\func{F:\mathcal{C}^\dagger\to \mathcal{D}^\dagger}$ is a functor $\func{F:\mathcal{C}\to \mathcal{D}}$ which sends $\mathcal{W}_\mathcal{C}$ into $\mathcal{W}_\mathcal{D}$. 
	
	A \emph{2-category with weak equivalences} $\DD^\dagger$ consists of a 2-category $\DD$ together with the structure of a category with weak equivalences on the underlying 1-category of $\DD$. A \emph{homotopical 2-functor} (or \emph{functor of marked 2-categories}) $\func{F:\CC^\dagger \to \DD^\dagger}$ is a strict 2-functor $\func{F:\CC\to \DD}$ such that the induced functor on underlying 1-categories is a marked functor. We will denote the category of 2-categories with weak equivalences and homotopical 2-functors by $2\!\Cat^{\on{we}}$. 
\end{definition}

\begin{remark}\label{rmk:widening}
	There is an obvious inclusion $\func{2\!\Cat^{\on{we}}\to 2\!\Cat^{\dagger}}$. This inclusion has a left adjoint $\func{Q:2\!\Cat^\dagger \to2\!\Cat^{\on{we}}}$, which we refer to as the \emph{widening functor}. For a marked 2-category $\CC^\dagger=(\CC,W_{\CC})$, the widening $Q(\CC^\dagger)$ has the same underlying 2-category. The subcategory of weak equivalences of $Q(\CC^\dagger)$ is the closure of $W_{\CC}$ under composition. 
\end{remark}

\begin{definition}\label{def:saturated}
	A marked 2-category $(\CC,\mathcal{W}_{\CC})$ is said to be \emph{saturated} if:
	\begin{itemize}
		\item The pair $(\CC,\mathcal{W}_{\CC})$ is a 2-category with weak equivalences.
		\item The category $\mathcal{W}_{\CC}$ contains all of the equivalences of $\CC$.
		\item Given $f \in \mathcal{W}_{\CC}$ and $g \in \CC$ together with an invertible 2-morphism $\func{f \nat[\simeq] g}$, then $g\in \mathcal{W}$.
	\end{itemize}
\end{definition}

\begin{definition}\label{def:markedslice}
	Let $\func{F: \CC^{\dagger}\to \DD^{\dagger}}$ be a functor of marked 2-categories. For every $d \in \DD^{\dagger}$ we define a marked 2-category $\CC_{d \downslash}^{\dagger}$, whose underlying 2-category is the lax slice category (\autoref{def:laxslice}), by declaring an edge to be marked if and only if the associated 2-morphism is invertible and the associated 1-morphism is marked in $\CC^{\dagger}$.  
\end{definition}

\begin{remark}
	Note that the 2-nerve $\func{\Nerv_2:2\!\Cat\to \Set_\Delta}$ extends to a functor 
	\[
	\func{\Nerv_2^\dagger:2\!\Cat^\dagger\to \Set_\Delta^+}
	\] 
	into marked simplicial sets. 
\end{remark}

\begin{definition}\glsadd{LW}
	Let $(X,W)\in \Set_\Delta^+$ be a marked simplicial set. A \emph{($\infty$-categorical) localization of $X$ by $W$} is an $\infty$-category $L_{\mathcal{W}}(X)$ together with a map $\func{\gamma_X:X\to L_{\mathcal{W}}(X)}$ of marked simplicial sets such that, for every $\infty$-category $\mathscr{C}$, the induced map  
	\[
	\func{\gamma_X^\ast:\Fun((X,W),\mathscr{C})\to[\simeq] \Fun(L_{\mathcal{W}}(X),\mathscr{C})}
	\]
	is an equivalence of $\infty$-categories. 
\end{definition}

\begin{remark}
	It is easy to see that fibrant replacement in the model structure on marked simplicial sets gives a localization map. We can therefore assume that $\func{L_{\mathcal{W}}:\Set_\Delta^+\to \Set_\Delta^+}$ is a functor, and that there is a canonical natural transformation $\func{\id_{\Set_\Delta^+}\nat L_{\mathcal{W}}}$ giving the localization morphism $\gamma_X$. 
\end{remark}

\begin{notation}
	Let $\CC^\dagger \in 2\!\Cat^\dagger$. We will denote by $L_{\mathcal{W}}(\DD^\dagger)$ the $\infty$-category $L_{\mathcal{W}}(\Nerv_2^\dagger(\DD^\dagger))$.
\end{notation}

\begin{proposition}
	Let $(X,W)\in \Set_\Delta^+$ be a marked simplicial set. The localization map 
	\[
	\func{\gamma_X:X\to L_{\mathcal{W}}(X)}
	\] 
	is both cofinal and coinitial. 
\end{proposition}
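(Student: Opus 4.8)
The plan is to deduce both assertions from the universal property defining $L_{\mathcal{W}}(X)$, together with the standard fact that cofinality of a map of simplicial sets can be detected on space-valued diagrams. Write $\gamma=\gamma_X$, $Y=L_{\mathcal{W}}(X)$, and let $\mathscr{S}$ denote the $\infty$-category of spaces. By the Joyal--Lurie criterion (\cite[4.1.3.1]{HTT}, or \cite{Cisinski}), $\gamma$ is cofinal if and only if the simplicial set $X\times_Y Y_{y/}$ is weakly contractible for every object $y\in Y$. Since the colimit in $\mathscr{S}$ of the corepresentable $\on{Map}_Y(y,-):Y\to\mathscr{S}$ is the classifying space $|Y_{y/}|\simeq\ast$, while its restriction along $\gamma$ classifies precisely the left fibration $X\times_Y Y_{y/}\to X$ and therefore has colimit $|X\times_Y Y_{y/}|$, it suffices to prove that $\gamma$ preserves colimits of space-valued diagrams: that for every $F\colon Y\to\mathscr{S}$ the canonical map $\on{colim}_X F\gamma\to\on{colim}_Y F$ is an equivalence.

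First I would record the precise consequence of the localization property that the argument uses. For every $\infty$-category $\mathscr{C}$, restriction along $\gamma$ identifies $\Fun(Y,\mathscr{C})$ with the full subcategory $\Fun((X,W),\mathscr{C})\subseteq\Fun(X,\mathscr{C})$ spanned by those functors sending every edge of $W$ to an equivalence. In particular, for $\mathscr{C}=\mathscr{S}$ the restriction functor $\gamma^{\ast}\colon\Fun(Y,\mathscr{S})\to\Fun(X,\mathscr{S})$ is fully faithful, and its essential image contains every constant functor, since constant functors invert all edges. Thus for a space $S$ one has $\underline{S}_X=\gamma^{\ast}\underline{S}_Y$.

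The second step is a formal colimit computation. Using the adjunction between $\on{colim}\colon\Fun(Y,\mathscr{S})\to\mathscr{S}$ and the constant-diagram functor, for every space $S$ we obtain
\[
\on{Map}_{\mathscr{S}}\bigl(\on{colim}_Y F,\,S\bigr)\simeq \on{Map}_{\Fun(Y,\mathscr{S})}\bigl(F,\underline{S}_Y\bigr),\qquad \on{Map}_{\mathscr{S}}\bigl(\on{colim}_X F\gamma,\,S\bigr)\simeq \on{Map}_{\Fun(X,\mathscr{S})}\bigl(\gamma^{\ast}F,\gamma^{\ast}\underline{S}_Y\bigr),
\]
where in the second identity I have used $F\gamma=\gamma^{\ast}F$ and $\underline{S}_X=\gamma^{\ast}\underline{S}_Y$. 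Full faithfulness of $\gamma^{\ast}$ makes the two right-hand mapping spaces agree naturally in $S$, so the Yoneda lemma forces $\on{colim}_X F\gamma\to\on{colim}_Y F$ to be an equivalence. By the reduction of the first paragraph, $\gamma$ is cofinal. Coinitiality then follows by the dual argument: since $(-)^{\op}$ commutes with $\infty$-categorical localization, $Y^{\op}$ is the localization of $X^{\op}$ at $W^{\op}$ with localization map $\gamma^{\op}$, and coinitiality of $\gamma$ is exactly cofinality of $\gamma^{\op}$, which is the case already proved.

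The step I expect to be the main obstacle is the very first reduction: fixing the correct form of the cofinality criterion for a map out of a \emph{general} (non-fibrant) simplicial set $X$, and checking that preservation of space-valued colimits is genuinely equivalent to cofinality here, rather than merely implied by it. Once that criterion is in place and applied to corepresentables as above, everything else is a formal consequence of the universal property and the Yoneda lemma.
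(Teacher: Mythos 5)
Your argument is correct, but it is worth noting that the paper does not actually prove this statement: its ``proof'' is a one-line citation to \cite[Prop.~7.1.10]{Cisinski}. What you have written is a self-contained reconstruction along the standard lines. The two pillars are both sound: (i) cofinality of a map into an $\infty$-category $Y$ can be tested on space-valued diagrams, because applying preservation of colimits to the corepresentables $\on{Map}_Y(y,-)$ recovers exactly the Joyal--Lurie contractibility criterion for $X\times_Y Y_{y/}$ (and for this application you only need the two colimits to be \emph{abstractly} equivalent, since being contractible is a property --- so the mild worry about whether the Yoneda equivalence agrees with the canonical comparison map is not even load-bearing, though it does agree, as the comparison map is adjoint to the restriction of the unit $F\to\underline{\on{colim}_Y F}$); and (ii) the universal property of $L_{\mathcal{W}}$ gives full faithfulness of $\gamma^\ast$ on $\mathscr{S}$-valued functors together with constants lying in the image, which is exactly what the $\on{colim}\dashv\on{const}$ adjunction needs. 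The reduction of coinitiality to cofinality of $\gamma^{\op}$ via compatibility of localization with $(-)^{\op}$ is also fine. The one point you flagged as a possible obstacle --- the form of the cofinality criterion for a non-fibrant source --- is not an issue: HTT~4.1.3.1 allows an arbitrary simplicial set as source provided the target is an $\infty$-category, which $L_{\mathcal{W}}(X)$ is by definition. In short: a correct proof where the paper simply defers to the literature.
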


\begin{proof}
	This is \cite[Prop. 7.1.10]{Cisinski}. 
\end{proof}

\begin{remark}
	When we write \emph{cofinal}, we follow the convention of \cite{HTT}, in that cofinal functors are those $f:X\to Y$ such that precomposition with $f$ preserves $\infty$-colimits. We will call the dual notion (regarding preservation of $\infty$-limits) a \emph{coinitial functor}. 
\end{remark}

\section{Another model for the relative 2-nerve}\label{sec:QA}
In this section we  develop the necessary  technology for the proof of \autoref{thm:AdagIntro}. Recall that in \cite{ADS2Nerve} we constructed a Quillen equivalence

\[
\PPhi_{\CC}: (\msSet)_{/\Nsc(\CC)} \llra \Fun_{\msSet}(\CC^{(\op,\op)}, \msSet): \cchi_{\CC}
\]
for every 2-category $\CC$, where the left-hand side is equipped with the \emph{scaled cartesian} model structure and the right-hand side is equipped with the \emph{projective} model structure on $\msSet$-enriched functors. The main goal of this section is to  define a variant of $\cchi$ inducing a Quillen equivalence
\[
\widetilde{\PPhi}_{\CC}: (\msSet)_{/\Nsc(\CC)} \llra \Fun_{\msSet}(\CC^{(\op,\op)}, \msSet): \widetilde{\cchi}_{\CC}
\]
The functor $\widetilde{\cchi}$ will play a crucial role in the proof of \autoref{thm:AdagIntro} by allowing us to handle 2-categorical information in a more efficient way. In addition we will show that both constructions are related by means of a canonical comparison map $\func{\cchi \nat \widetilde{\cchi}}$. The main theorem of this section identifies a key property of this comparison: 

\begin{theorem}\label{thm:comparison}
	Let $\mathbb{C}$ be a 2-category. Then, for every $\msSet$-enriched functor
	\[
	\func{F: \CC^{(\op,\op)} \to \Cat_{\infty}}
	\]
	the comparison map  $\func{\eta_{\CC}: \cchi_{\CC}(F) \to \widetilde{\cchi}_{\CC}(F)}$ is a weak equivalence of scaled cartesian fibrations over $\Nsc(\CC)$.
\end{theorem}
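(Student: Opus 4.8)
The plan is to reduce the statement to a fiberwise computation. Since $F$ takes values in $\Cat_\infty$, it is a projectively fibrant object of $\Fun_{\msSet}(\CC^{(\op,\op)}, \msSet)$; as $\cchi_{\CC}$ and $\widetilde{\cchi}_{\CC}$ are both right Quillen functors, the two objects $\cchi_{\CC}(F)$ and $\widetilde{\cchi}_{\CC}(F)$ are fibrant in the scaled cartesian model structure on $(\msSet)_{/\Nsc(\CC)}$, i.e.\ honest scaled cartesian fibrations over $\Nsc(\CC)$. I would then invoke the fiberwise detection criterion for this model structure (as set up in \cite{ADS2Nerve}): a morphism between scaled cartesian fibrations over $\Nsc(\CC)$ is a weak equivalence precisely when, for every object $c\in \CC$, the induced map on fibers over the vertex $c$ is an equivalence of $\infty$-categories. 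This turns the theorem into the claim that each $(\eta_{\CC})_c$ is a categorical equivalence.

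Next I would identify the two fibers. For the original relative 2-nerve this is built into its defining property: the fiber of $\cchi_{\CC}(F)$ over $c$ is canonically the $\infty$-category $F(c)$ (cf.\ \cite{ADS2Nerve}). For the thickened construction $\widetilde{\cchi}_{\CC}(F)$, I would read off the fiber directly from the definition of $\widetilde{\cchi}$ given earlier in this section; by design it is a `fattened' model of $F(c)$, assembled from the same mapping-space data of $F$ but recording additional, homotopically redundant simplices coming from the lax structure on $\CC$. The restriction of $\eta_{\CC}$ over the vertex $c$ is then the canonical comparison $F(c)\to \widetilde{\cchi}_{\CC}(F)_c$ between these two models.

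The crux is to show this comparison is a categorical equivalence for each $c$, and this is where I expect the main obstacle to lie. I anticipate producing an explicit deformation retraction collapsing the thickening: the extra cells of $\widetilde{\cchi}_{\CC}(F)_c$ are indexed by strings of `thickening data' that form contractible (poset-like) parameter spaces, and I would exhibit a retraction $r$ with $r\circ (\eta_{\CC})_c = \id$ together with a simplicial homotopy $(\eta_{\CC})_c\circ r \simeq \id$ built from the composition and whiskering operations of $F$. The difficulty is twofold: the homotopy must be natural enough to assemble coherently across all of $\CC$ (so that $\eta_{\CC}$ remains a single map over the base, not just fiberwise comparisons), and the scaled structure forces one to check that the newly filled triangles are sent to thin simplices, so that the retraction is a map of scaled simplicial sets rather than merely of underlying simplicial sets.

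Finally, I would record a consistency check via the adjoint picture: the comparison $\eta_{\CC}$ is mated to a natural transformation $\widetilde{\PPhi}_{\CC}\to \PPhi_{\CC}$ of left adjoints, and since both participate in Quillen equivalences onto the same homotopy category, no obstruction to $\eta_{\CC}$ being a weak equivalence can arise beyond the level of fibers. This does not replace the fiberwise argument, which pins the map down on the nose, but it reassures us that the fiberwise equivalences established above are exactly what is needed. Combining the fiberwise detection criterion with the fiber identifications and the contracting homotopy then yields that $\eta_{\CC}$ is a weak equivalence of scaled cartesian fibrations over $\Nsc(\CC)$.
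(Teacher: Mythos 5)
Your skeleton --- fibrancy of both sides plus fiberwise detection of equivalences between scaled cartesian fibrations --- is the same as the paper's, which also reduces to fibers via \cite[Lemma 3.2.25]{Lurie_Goodwillie}. But the proposal leaves the two genuinely hard points unestablished. First, you assert that $\widetilde{\cchi}_{\CC}$ is right Quillen; in the paper this is not free but is the content of \autoref{prop:trivfib} together with \autoref{thm:Qequivalence}, proved by an explicit combinatorial lifting argument (the cofibrations $\mathfrak{S}^n\to\mathfrak{O}^n_{0\downslash}$) and by showing that the mate $\widetilde{\epsilon}$ is a levelwise weak equivalence. Second, and more seriously, the fiberwise equivalence itself is never proved: you ``anticipate'' a deformation retraction collapsing the thickening and then list the obstacles (coherence over the base, compatibility with the scaling) without resolving them. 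That is a plan, not an argument, and it is precisely the step you yourself identify as the crux. A smaller but relevant inaccuracy: the fiber of $\cchi_{\CC}(F)$ over $c$ is \emph{not} $F(c)$ on the nose; it is $\cchi_{*}(F(c))$, itself a thickened model built from the posets $D^I$, which only receives a weak equivalence from $F(c)$ by \cite[Theorem 4.1.1]{ADS2Nerve}. So the map to be analyzed on fibers is $\cchi_{*}(\EuScript{B})\to\widetilde{\cchi}_{*}(\EuScript{B})$, a comparison of two thickenings, not a retraction of a thickening onto $F(c)$.

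The paper closes both gaps at once and in a way that makes your explicit retraction unnecessary: by \autoref{lem:eq} the levelwise-equivalence property of the mate $\widetilde{\epsilon}$ is detected on the representables $(\Delta^n)^{\flat}\to\Nsc(\OO^n)$ (and $(\Delta^1)^{\sharp}$), where by \autoref{lem:keyquillenlemma} it becomes the map $\func{\theta_n:\mathfrak{O}^n\to\mathfrak{D}^n}$, i.e.\ $\func{\Nerv_2(\OO^I_{i\downslash})\to\Nerv(D^I)}$; this is a Joyal equivalence because the mapping categories $\OO^I_{i\downslash}(S,T)$ are contractible posets (\autoref{lem:Joyal}). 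This yields the Quillen equivalence for $\widetilde{\PPhi}_{\CC}\dashv\widetilde{\cchi}_{\CC}$, and then the fiberwise statement follows by passing to adjoints in the chain $\func{\EuScript{B}\to\cchi_{*}(\EuScript{B})\to\widetilde{\cchi}_{*}(\EuScript{B})}$ and applying 2-out-of-3 against the already known equivalence of the first map. In other words, the adjoint picture that you relegate to a ``consistency check'' in your final paragraph is in fact the paper's actual mechanism for pinning down the fiber map; the contractibility of the posets $\OO^I_{i\downslash}(S,T)$ is the rigorous stand-in for the contracting homotopy you hoped to construct by hand. To repair your write-up, either carry out the retraction in full (including the scaling and base-coherence issues you flagged) or replace that step with the reduction-to-representables and 2-out-of-3 argument.
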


We will review the basic definitions involved in the construction of the relative 2-nerve.

\begin{definition}
	We define the homotopy category functor 
	\[
	\func{\ho: 2\Cat \to \Cat }
	\]
	which sends a 2-category $\CC$ to the 1-category $\ho(\CC)$ having the sames objects as $\CC$ and with $\Hom$-sets given by $\ho(\CC)(c,c')=\pi_{0}\CC(c,c')$.
\end{definition}

\begin{definition}
	Let $I$ be a linearly ordered finite set such that $i=\min(I)$. We  denote by $D^{I}$ the homotopy category of $\OO^{I}_{i \downslash}$.
\end{definition}

The objects of $\OO^{I}_{i \downslash}$ can be identified with  subsets $S \subseteq I$ such that $\min(S)=i$. Recall that the mapping category $\OO^{I}_{i \downslash}(S,T)$ is a poset whose objects $\func{\mathcal{U}:S \to T}$ are given by subsets $\mathcal{U} \subseteq I$ such that
\[
\min(\mathcal{U})=\max(S), \enspace \max(\mathcal{U})=\max(T), \enspace T \subseteq S \cup \mathcal{U},
\] 
ordered by inclusion. Since $\mathcal{U} \subseteq [\max(S),\max(T)]$ it follows that $\OO^{I}_{i \downslash}(S,T)$ is contractible. In particular we obtain that the homotopy category $D^{I}$ is a poset. We have also proved the following lemma.

\begin{lemma}\label{lem:Joyal}
	The canonical map $\func{\Nerv_2(\OO^{I}_{i \downslash} )\to \Nerv(D^{I})}$ is  a weak equivalence in the Joyal model structure.
\end{lemma}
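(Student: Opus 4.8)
The plan is to recognise the canonical map as the Duskin nerve of the projection $2$-functor $p\colon \OO^{I}_{i\downslash}\to \ho(\OO^{I}_{i\downslash})=D^{I}$ onto the homotopy category, and then to verify that this nerve is a categorical equivalence by checking that its Joyal-fibrant replacement is essentially surjective and fully faithful. Since $D^{I}$ is a $1$-category, its Duskin nerve coincides with the ordinary nerve $\Nerv(D^{I})$, which is already an $\infty$-category; by contrast, the source $\Nerv_2(\OO^{I}_{i\downslash})$ need not be fibrant, since the hom-posets of $\OO^{I}_{i\downslash}$ carry non-invertible $2$-morphisms. Recall that a map of simplicial sets is a weak equivalence in the Joyal model structure precisely when the induced functor on fibrant replacements is an equivalence of $\infty$-categories, i.e.\ is essentially surjective and fully faithful.

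Essential surjectivity is immediate: $p$ is the identity on objects, so $\Nerv_2(p)$ is a bijection on vertices. For full faithfulness I would pass to mapping spaces. The key input is that, for any $2$-category $\mathcal{E}$ and objects $c,c'$, the mapping space of the $\infty$-category underlying $\Nerv_2(\mathcal{E})$ is naturally weakly equivalent to the classifying space $|\Nerv(\mathcal{E}(c,c'))|$ of the hom-category. The reason is that mapping spaces in an $\infty$-category are $\infty$-groupoids, so the Joyal-fibrant replacement of the Duskin nerve necessarily inverts all $2$-morphisms; consequently it is the $\infty$-groupoid completion of $\mathcal{E}(c,c')$ that appears, rather than the nerve of $\mathcal{E}(c,c')$ on the nose. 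Granting this, the map that $\Nerv_2(p)$ induces on mapping spaces between $S$ and $T$ is identified with $|\Nerv(p_{S,T})|$, where $p_{S,T}\colon \OO^{I}_{i\downslash}(S,T)\to D^{I}(S,T)$ is the projection onto $\pi_0$.

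It then remains to see that each $p_{S,T}$ is a weak homotopy equivalence of categories, which is exactly the content of the discussion preceding the lemma. When it is nonempty, the hom-poset $\OO^{I}_{i\downslash}(S,T)$ has a maximal element, namely the full interval $I\cap[\max S,\max T]$, hence a terminal object, so its classifying space is contractible; and $D^{I}(S,T)=\pi_0\OO^{I}_{i\downslash}(S,T)$ is then a point, being empty exactly when the source is. Thus $|\Nerv(p_{S,T})|$ is either a map between contractible spaces or the map $\emptyset\to\emptyset$, a weak equivalence in either case. Hence $\Nerv_2(p)$ is fully faithful, and being essentially surjective it is a Joyal equivalence.

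The main obstacle is the mapping-space identification of the second paragraph. It is worth emphasising that this step is genuinely necessary rather than cosmetic: the hom-posets $\OO^{I}_{i\downslash}(S,T)$ are \emph{not} equivalent to the point as categories (they may, for instance, look like $\Nerv([1])$), only weakly contractible as spaces, so the argument cannot proceed by claiming that $p$ is a local equivalence of $\infty$-categories. One must instead use that the Duskin nerve only records $2$-morphisms up to the homotopy visible after groupoidification. I would either cite this computation of the hom-spaces of the Duskin nerve from the literature on the $2$-nerve (in the spirit of the Bullejos--Cegarra analysis of the classifying space of $\Nerv_2$, or the treatment in \cite{kerodon}), or, failing a clean reference, establish it directly by exhibiting the left-pinched mapping space of $\Nerv_2(\mathcal{E})$ at $(c,c')$ as a model for $|\Nerv(\mathcal{E}(c,c'))|$.
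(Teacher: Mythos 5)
Your proof is correct and follows essentially the same route as the paper's: the paper's entire argument consists of the observation, in the paragraph preceding the lemma, that the hom-posets $\OO^{I}_{i\downslash}(S,T)$ are contractible (each nonempty one having a maximum), with the identification of mapping spaces in the Joyal-fibrant replacement of $\Nerv_2$ with classifying spaces of hom-categories left entirely implicit. Your write-up makes that implicit step explicit and correctly flags it as the one input requiring a citation or a direct argument, so it is if anything more complete than the original.
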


\begin{remark}\label{rem:lift}
	Recall from \cite{ADS2Nerve} that given an  inclusion of finite linearly ordered sets $J \subseteq I$ we obtain the following fully faithful pullback functors
	\[
	\func{ \rho_{J,I}: \OO^{I}(\min(I),\min(J))^{\op} \times D^{J} \to D^I;  (L,S) \mapsto L \cup S  }
	\]
	We observe that we can produce a lift of the previous functor to a commutative diagram
	\[
	\begin{tikzcd}[ampersand replacement=\&]
	\OO^{I}(\min(I),\min(J))^{\op} \times \OO^{J}_{j \downslash} \arrow[d] \arrow[r,"\widetilde{\rho}_{J,I}"] \& \OO^{I}_{i \downslash} \arrow[d] \\
	\OO^{I}(\min(I),\min(J))^{\op} \times D^{J} \arrow[r] \& D^I
	\end{tikzcd}
	\]
	One immediately checks that the functor $\widetilde{\rho}_{J,I}$ is injective on objects, 1-morphisms and 2-morphisms.
\end{remark}

We can now give the definition of the relative 2-nerve and its new variant.

\begin{definition}\label{def:rel2nerve}	\glsadd{cchi}
	Let $\CC$ be a $2$-category and let 
	\[
	F: \CC^{(\op,\op)} \lra \msSet
	\]
	be a $\msSet$-enriched functor. 
	We define a marked simplicial set $\cchi_{\CC}(F)$, called the {\em relative $2$-nerve of $F$}, as follows.
	An $n$-simplex of $\cchi_{\CC}(F)$ consists of 
	\begin{enumerate}
		\item an $n$-simplex $\sigma: \Delta^n \to \Nsc(\CC)$,
		\item for every nonempty subset $I \subset [n]$, a map of marked simplicial sets
		\[
		\theta_I: \Nerv(D^{I})^{\flat} \to  F(\sigma(\min(I))),
		\]
	\end{enumerate}
	such that, for every $J \subset I \subset [n]$, the diagram 
	\[
	\begin{tikzcd}
	\Nerv(\OO^{I}(\min(I),\min(J))^{\op})^{\flat} \times \Nerv(D^{J})^{\flat}
	\arrow{r}{\rho_{J,I}} \arrow[d,"\Nerv(\sigma) \times \theta_J"] & 
	\Nerv(D^{I})^{\flat} \arrow[d,"\theta_I"] \\
	\Nerv(\CC(\sigma(\min(I)),\sigma(\min(J)))^{\op})^{\flat} \times F(\sigma(\min(J)))
	\arrow{r}{F(-)} & 
	F(\sigma(\min(I)))
	\end{tikzcd}
	\]
	commutes. The marked edges of $\cchi_{\CC}(F)$ are defined as follows: An edge $e$ of $\cchi_{\CC}(F)$ consists of a
	morphism $f: x \to y$ in $\CC$, together with vertices $A_x \in F(x)$, $A_y \in F(y)$, and
	an edge $\widetilde{e}: A_x \to F(f)(A_y)$ in $F(x)$. We declare $e$ to be marked if
	$\widetilde{e}$ is marked. Finally, we consider $\cchi_{\CC}(F)$ as a simplicial set over
	$\Nsc(\CC)$ by means of the forgetful functor.
\end{definition}

Our strategy is to mirror \autoref{def:rel2nerve}, employing as our building blocks the 2-categories $\OO^{I}_{i \downslash}$. We take \autoref{rem:lift} as a guiding principle to lift the compatibility  conditions to this new version. 

\begin{definition}\label{def:chitilde}\glsadd{tilcchi}
	Let $\CC$ be a $2$-category and let 
	\[
	F: \CC^{(\op,\op)} \lra \msSet
	\]
	be a $\msSet$-enriched functor. 
	We define a marked simplicial set $\widetilde{\cchi}_{\CC}(F)$ as follows.
	An $n$-simplex of $\widetilde{\cchi}_{\CC}(F)$ consists of
	\begin{enumerate}
		\item an $n$-simplex $\sigma: \Delta^n \to \Nsc(\CC)$,
		\item for every nonempty subset $I \subset [n]$, a map of marked simplicial sets
		\[
		\theta_I: \NN\left(\OO^{I}_{i \downslash}\right)^{\flat} \to  F(\sigma(\min(I))),
		\]
	\end{enumerate}
	such that, for every $J \subset I \subset [n]$, the diagram 
	\[
	\begin{tikzcd}
	\Nerv(\OO^{I}(\min(I),\min(J))^{\op})^{\flat} \times \NN\left(\OO^{J}_{j \downslash}\right)^{\flat}
	\arrow{r}{\widetilde{\rho}_{J,I}} \arrow[d,"\Nerv(\sigma) \times \theta_J"] & 
	\NN\left(\OO^{I}_{i \downslash}\right)^{\flat} \arrow[d,"\theta_I"] \\
	\Nerv(\CC(\sigma(\min(I)),\sigma(\min(J)))^{\op})^{\flat} \times F(\sigma(\min(J)))
	\arrow{r}{F(-)} & 
	F(\sigma(\min(I)))
	\end{tikzcd}
	\]
	commutes. The marked edges are defined in a way totally analogous to those of $\cchi_{\CC}(F)$. Finally, we consider $\widetilde{\cchi}_{\CC}(F)$ as a simplicial set over $\Nsc(\CC)$ by means of the forgetful functor.
\end{definition}

\begin{remark}
	Observe that the canonical maps $\func{\OO^{I}_{i \downslash} \to D^{I}}$ induce a natural transformation of functors $\func{\widetilde{\eta}: \cchi \nat \widetilde{\cchi}}$. We denote the adjoint of $\widetilde{\eta}$ by $\func{\widetilde{\epsilon}: \widetilde{\PPhi} \nat \PPhi}$.
\end{remark}

\begin{remark}
	Let $\func{f: \CC \to \DD}$ be a 2-functor. Then, we have a diagram
	\[
	\begin{tikzcd}[ampersand replacement=\&]
	\Fun_{\msSet}(\DD^{(\op,\op)},\msSet) \arrow[d,"\widetilde{\cchi}_{\DD}"] \arrow[r] \& \Fun_{\msSet}(\CC^{(\op,\op)},\msSet) \arrow[d,"\widetilde{\cchi}_{\CC}"] \\
	\left(\msSet\right)_{\big/ \Nsc(\DD)} \arrow[r] \& \left(\msSet\right)_{\big/ \Nsc(\CC)}
	\end{tikzcd}
	\]
	which commutes up to natural isomorphism, where the top horizontal morphism is given by restriction and the bottom horizontal morphism is given by pullback.
\end{remark}

In order to prepare ourselves for the proof of \autoref{thm:comparison} we show that $\widetilde{\cchi}$ preserves trivial fibrations, paralleling the strategy followed in \cite{ADS2Nerve}.

Let $\CC$ be 2-category and consider a pointwise trivial fibration of $\msSet$-enriched functors $\func{ F \to G}$. Note that the canonical map $\func{\widetilde{\eta}_{\CC}: \cchi_{\CC} \nat \widetilde{\cchi}_{\CC}}$ is an isomorphism  on 0-simplices, 1-simplices and marked edges. Therefore, we obtain solutions to lifting problems of  the form

\begin{equation}\label{eq:boundary}
\begin{tikzcd}[ampersand replacement=\&]
(\partial \Delta^n)^{\flat} \arrow[d] \arrow[r] \& \widetilde{\cchi}_{\CC}(F) \arrow[d] \\
(\Delta^{n})^{\flat} \arrow[r] \arrow[ur, dotted] \& \widetilde{\cchi}_{\CC}(G)
\end{tikzcd}
\end{equation}
\begin{equation}\label{eq:markedlift}
\begin{tikzcd}[ampersand replacement=\&]
\partial (\Delta^1)^{\sharp} \arrow[d] \arrow[r] \& \widetilde{\cchi}_{\CC}(F) \arrow[d] \\
(\Delta^{1})^{\sharp} \arrow[r] \arrow[ur, dotted] \& \widetilde{\cchi}_{\CC}(G)
\end{tikzcd}
\end{equation}
for $n\leq 1$. We are left to show the case $\func{(\partial \Delta^n)^{\flat} \to (\Delta^n)^{\flat}}$ for $n\geq 2$ so we can systematically ignore the markings.

Let $n \geq 0$. We note that \autoref{rem:lift} implies that for any inclusion of finite linearly ordered sets $I \subseteq [n]$ we obtain a cofibration of simplicial sets
\[
\func{\Nerv\left(\OO^{n}(0,\min(I))^{\op}\right) \times \NN\left(\OO^{I}_{i \downslash}\right) \to \NN\left(\OO^{n}_{0 \downslash}\right)}
\]
and denote its image by $\mathfrak{A}(I)$. We also denote by $\mathfrak{O}^{I}_{i \downslash}$ the nerve of $\OO^{I}_{i \downslash}$. Let us define the following simplicial set
\[
\mathfrak{S}^{n}=\bigcup\limits_{\partial \Delta^{I} \subset \partial \Delta^n}\mathfrak{A}(I) \subset \mathfrak{O}^{n}_{0 \downslash}.
\]
A totally analogous argument to that in \cite[Proposition 3.1.1]{ADS2Nerve} shows that lifting problems of the form (1) are in bijection with lifting problems of the form

\begin{equation}
\begin{tikzcd}[ampersand replacement=\&]
\mathfrak{S}^{n} \arrow[d] \arrow[r] \&  F(\sigma(0)) \arrow[d] \\
\mathfrak{O}^{n}_{0 \downslash} \arrow[r] \arrow[ur, dotted] \& G(\sigma(0))
\end{tikzcd}
\end{equation}
which can be solved since left column map is a cofibration. We have now proved \autoref{prop:trivfib} below.

\begin{proposition}\label{prop:trivfib}
	The functor $\func{\widetilde{\cchi}_{\CC}: \Fun_{\msSet}\left(\CC^{(\op,\op)},\msSet\right) \to \left(\msSet\right)_{\big/ \Nsc(\CC)}}$ preserves trivial fibrations.
\end{proposition}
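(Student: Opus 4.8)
The plan is to show that $\widetilde{\cchi}_{\CC}$ carries a pointwise trivial fibration $\func{F\to G}$ to a map possessing the right lifting property against the generating cofibrations of $\msSet$, namely the boundary inclusions $\func{(\partial\Delta^n)^{\flat}\to(\Delta^n)^{\flat}}$ for $n\geq 0$ together with the marking cofibration $\func{(\Delta^1)^{\flat}\to(\Delta^1)^{\sharp}}$. I would first clear away the low-dimensional and marking cases. Since the comparison $\func{\widetilde{\eta}_{\CC}:\cchi_{\CC} \nat \widetilde{\cchi}_{\CC}}$ is an isomorphism on $0$-simplices, $1$-simplices, and marked edges, every lifting problem for $\widetilde{\cchi}_{\CC}$ against a generating cofibration with $n\leq 1$, or against the marking cofibration, is carried isomorphically to the corresponding lifting problem for $\cchi_{\CC}$; and the latter is solvable because $\cchi_{\CC}$ is already known to preserve trivial fibrations by \cite{ADS2Nerve}. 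This reduces the claim to the boundary inclusions $\func{(\partial\Delta^n)^{\flat}\to(\Delta^n)^{\flat}}$ with $n\geq 2$, where the markings are forced and can be ignored.

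For the remaining case the key move is to transpose the lifting problem into one living entirely in the target functors. By \autoref{def:chitilde} an $n$-simplex of $\widetilde{\cchi}_{\CC}(G)$ is an $n$-simplex $\func{\sigma:\Delta^n\to\Nsc(\CC)}$ together with a compatible family $\{\theta_I\}$ indexed by nonempty $I\subseteq[n]$. In the lifting problem the simplex $\sigma$ is pinned down by the map to $\Nsc(\CC)$; the boundary $\partial\Delta^n$ provides all components $\theta_I$ with $I\subsetneq[n]$ valued in $F$, while the bottom edge provides $\theta_{[n]}$ valued in $G$. Solving the lift thus means producing a lift of $\theta_{[n]}$ to $F(\sigma(0))$ compatible with the prescribed faces. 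Writing $\mathfrak{O}^{n}_{0\downslash}=\NN(\OO^{n}_{0\downslash})$, the compatibility conditions across the proper faces of $\partial\Delta^n$ assemble --- via the lifted functors $\widetilde{\rho}_{J,I}$ of \autoref{rem:lift} --- into a single map out of the subcomplex $\mathfrak{S}^{n}=\bigcup\mathfrak{A}(I)\subseteq\mathfrak{O}^{n}_{0\downslash}$ with values in $F(\sigma(0))$, whereas $\theta_{[n]}$ becomes a map $\func{\mathfrak{O}^{n}_{0\downslash}\to G(\sigma(0))}$. The original lifting problem is therefore equivalent to
\[
\begin{tikzcd}[ampersand replacement=\&]
\mathfrak{S}^{n} \arrow[d] \arrow[r] \& F(\sigma(0)) \arrow[d] \\
\mathfrak{O}^{n}_{0\downslash} \arrow[r] \arrow[ur,dotted] \& G(\sigma(0)).
\end{tikzcd}
\]

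It then remains to solve this transposed problem. As $\func{F\to G}$ is a pointwise trivial fibration, the map $\func{F(\sigma(0))\to G(\sigma(0))}$ is a trivial fibration of underlying simplicial sets, so a lift exists as soon as the inclusion $\func{\mathfrak{S}^{n}\to\mathfrak{O}^{n}_{0\downslash}}$ is a cofibration, i.e.\ a monomorphism. Here I would invoke the closing observation of \autoref{rem:lift}: each $\widetilde{\rho}_{J,I}$ is injective on objects, $1$-morphisms and $2$-morphisms, which ensures that every piece $\mathfrak{A}(I)$ embeds as a genuine subcomplex of $\mathfrak{O}^{n}_{0\downslash}$; hence so does their union $\mathfrak{S}^{n}$, the inclusion is a cofibration, and the lift exists. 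Tracing the transposition backwards then produces the desired filler in the original diagram, completing all cases.

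The main obstacle is the bookkeeping hidden inside the word \enquote{assemble}: one must verify that the family of compatibility squares of \autoref{def:chitilde} ranging over the proper faces of $\partial\Delta^n$ corresponds \emph{exactly} --- with neither missing nor redundant constraints --- to a single map out of $\mathfrak{S}^{n}$, and that $\mathfrak{S}^{n}$ really is the colimit of the pieces $\mathfrak{A}(I)$ glued along their overlaps. This is the direct analogue of \cite[Proposition 3.1.1]{ADS2Nerve}, with the building blocks $D^{I}$ there replaced by the lax slices $\OO^{I}_{i\downslash}$; it is precisely the compatible lift furnished by \autoref{rem:lift} that allows the combinatorial argument of that proposition to be transported to the present setting essentially unchanged.
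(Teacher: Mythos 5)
Your proposal is correct and follows essentially the same route as the paper: dispatch the $n\leq 1$ and marked-edge cases via the isomorphism $\widetilde{\eta}_{\CC}$ in low degrees together with the known result for $\cchi_{\CC}$, then for $n\geq 2$ transpose the boundary-filling problem to a lifting problem of $\func{\mathfrak{S}^{n}\to\mathfrak{O}^{n}_{0\downslash}}$ against the pointwise trivial fibration $\func{F(\sigma(0))\to G(\sigma(0))}$, using the injectivity of the $\widetilde{\rho}_{J,I}$ from \autoref{rem:lift} to see that the left map is a cofibration. The paper likewise defers the assembly bookkeeping to the analogue of \cite[Proposition 3.1.1]{ADS2Nerve}, so nothing essential is missing.
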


\begin{definition}
	Let $n \geq 0$, we define $\msSet$-enriched functor
	\[
	\func{\mathfrak{O}^{n}:(\OO^{n})^{(\op,\op)} \to \msSet; i \mapsto \left(\mathfrak{O}^{[i,n]}\right)^{\flat}_{i \downslash}.}
	\]
	We also have marked variant of the previous definition
	\[
	\func{\left(\mathfrak{O}^{1}\right)^{\sharp}:(\OO^{1})^{(\op,\op)} \to \msSet; i \mapsto \left(\mathfrak{O}^{[i,1]}\right)^{\sharp}_{i \downslash}.}
	\]
	Finally, we define another marked simplically enriched functor
	\[
	\func{\mathfrak{D}^{n}:(\OO^{n})^{(\op,\op)} \to \msSet; i \mapsto \left(\D^{[i,n]}\right)^{\flat}}
	\]
	where $\D^{I}$ denotes the nerve of the poset $D^{I}$.
\end{definition}

\begin{remark}
	Let us note that the abovementioned functors come equipped with a canonical natural transformations 
	\[
	\func{\theta_n:\mathfrak{O}^{n} \to \mathfrak{D}^{n}}, \enspace \func{\theta_1^{\sharp}:\left(\mathfrak{O}^{1}\right)^{\sharp} \to \left(\mathfrak{D}^{1}\right)^{\sharp}}
	\]
	which are a levelwise weak equivalence of marked simplicial sets as observed in \autoref{lem:Joyal}.
\end{remark}

\begin{lemma}\label{lem:keyquillenlemma}
	Consider $\func{(\Delta^n)^{\flat} \to \Delta^n}$ as an object of $\left( \msSet \right)_{\big/ \Nsc(\OO^n)}$. Then the following holds
	\begin{enumerate}
		\item $\widetilde{\PPhi}_{\OO^n}((\Delta^n)^{\flat})\isom \mathfrak{O}^{n}$, $\PPhi_{\OO^n}((\Delta^n)^{\flat})\isom \mathfrak{D}^{n}$.
		\item The canonical map $\func{\theta_n: \mathfrak{O}^{n} \to \mathfrak{D}^{n}}$ (resp. $\theta_1^{\sharp}$) can be identified with $\widetilde{\epsilon}_{\OO^n}((\Delta^n)^{\flat})$ (resp. $\widetilde{\epsilon}_{\OO^1}((\Delta^1)^{\sharp})$) under the isomorphisms above.
	\end{enumerate}
\end{lemma}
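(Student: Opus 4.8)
The plan is to compute both left adjoints on the universal object $(\Delta^n)^\flat \to \Nsc(\OO^n)$ by co-representability, exploiting the adjunctions $\PPhi_{\OO^n} \dashv \cchi_{\OO^n}$ and $\widetilde{\PPhi}_{\OO^n} \dashv \widetilde{\cchi}_{\OO^n}$. Since $\PPhi_{\OO^n}((\Delta^n)^\flat)$ corepresents the functor $F \mapsto \Hom_{/\Nsc(\OO^n)}((\Delta^n)^\flat, \cchi_{\OO^n}(F))$, it suffices to produce a bijection, natural in $F$, between this hom-set and $\Hom(\mathfrak{D}^n, F)$, and then invoke the Yoneda lemma. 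First I would observe that the structure map $\Delta^n \to \Nsc(\OO^n)$ is the nondegenerate top simplex, i.e.\ the one corresponding to $\id_{\OO^n}$, so that a map $(\Delta^n)^\flat \to \cchi_{\OO^n}(F)$ over $\Nsc(\OO^n)$ is exactly a single $n$-simplex of $\cchi_{\OO^n}(F)$ lying over this simplex; the flat marking imposes no further condition. By \autoref{def:rel2nerve}, such an $n$-simplex is precisely a family of maps $\theta_I\colon \Nerv(D^I)^\flat \to F(\min I)$, indexed by the nonempty $I \subseteq [n]$, subject to the compatibility squares built from the functors $\rho_{J,I}$.

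The heart of the argument is then to identify this family with an $\msSet$-enriched natural transformation $\mathfrak{D}^n \to F$. Recall that $\mathfrak{D}^n(i) = \Nerv(D^{[i,n]})^\flat$. I would send a natural transformation $(\eta_i)_{i}$ to the family $\theta_I := \eta_{\min I} \circ \iota_I$, where $\iota_I \colon \Nerv(D^I)^\flat \to \Nerv(D^{[\min I, n]})^\flat$ is induced by the subposet inclusion $D^I \hookrightarrow D^{[\min I,n]}$; conversely, a compatible family $(\theta_I)$ is sent to its maximal members $\eta_i := \theta_{[i,n]}$. The key point is that the enriched naturality square of $(\eta_i)$ for a pair $i \leq j$ is literally the compatibility square of \autoref{def:rel2nerve} for $J = [j,n] \subset I = [i,n]$, since the functoriality map of $\mathfrak{D}^n$ from $j$ to $i$ is exactly $\rho_{[j,n],[i,n]}$ and $\OO^{[i,n]}(i,j) = \OO^n(i,j)$. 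Moreover, the compatibility square for the pair $I \subset [\min I, n]$ degenerates --- because $\OO^{[\min I,n]}(\min I,\min I)$ is a point and $\rho$ there restricts to $\iota_I$ --- showing that $\theta_I$ is determined by $\eta_{\min I}$, so that the two assignments are mutually inverse. I expect the main obstacle to be checking that the remaining compatibility squares (for non-maximal $I$) follow from naturality; this is a diagram chase resting on the cocycle coherence of the $\rho_{J,I}$, which I would extract from \autoref{rem:lift} and the constructions of \cite{ADS2Nerve}.

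The identification $\widetilde{\PPhi}_{\OO^n}((\Delta^n)^\flat) \isom \mathfrak{O}^n$ is then proven verbatim, replacing $\Nerv(D^I)^\flat$ by $\NN(\OO^I_{i\downslash})^\flat$ and $\rho_{J,I}$ by its lift $\widetilde{\rho}_{J,I}$ throughout, using \autoref{def:chitilde} in place of \autoref{def:rel2nerve}; here $\mathfrak{O}^n(i) = \NN(\OO^{[i,n]}_{i\downslash})^\flat$ and $\eta_i := \widetilde{\theta}_{[i,n]}$. For part (2), rather than chase mates abstractly, I would track the transformation through the hom-set bijections: under the identifications of part (1), the map $\Hom(\mathfrak{D}^n, F) \to \Hom(\mathfrak{O}^n, F)$ corresponding to postcomposition with $\widetilde{\eta}_F\colon \cchi_{\OO^n}(F) \to \widetilde{\cchi}_{\OO^n}(F)$ is precomposition with $\widetilde{\epsilon}_{\OO^n}((\Delta^n)^\flat)$, by the very definition of the mate. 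Since $\widetilde{\eta}$ is induced levelwise by the canonical maps $\NN(\OO^I_{i\downslash}) \to \Nerv(D^I)$, tracing a transformation $(\eta_i)$ shows that this map sends $(\eta_i)$ to $(\eta_i \circ p_{[i,n]})$, where $p_{[i,n]}\colon \NN(\OO^{[i,n]}_{i\downslash})^\flat \to \Nerv(D^{[i,n]})^\flat$ is the weak equivalence of \autoref{lem:Joyal}. But precomposition with $\theta_n$, whose $i$-component is exactly $p_{[i,n]}$, has the same effect; hence $\widetilde{\epsilon}_{\OO^n}((\Delta^n)^\flat)$ is identified with $\theta_n$. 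The marked statement for $\theta_1^\sharp$ is obtained identically, using $(\Delta^1)^\sharp$ and the sharp-marked cells $(\mathfrak{O}^{[i,1]})^\sharp_{i\downslash}$.
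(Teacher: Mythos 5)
Your argument is correct, and it coincides with the paper's approach: the paper's entire proof is ``Immediate from unraveling the definitions,'' and your corepresentability/Yoneda computation --- identifying a map $(\Delta^n)^{\flat}\to\cchi_{\OO^n}(F)$ over $\Nsc(\OO^n)$ with a compatible family $(\theta_I)$, observing that the degenerate square for $I\subset[\min I,n]$ forces $\theta_I=\theta_{[\min I,n]}\circ\iota_I$, and matching the maximal squares with enriched naturality --- is exactly that unraveling carried out in detail, with the mate computation in part (2) likewise reducing to $\iota_{[i,n]}=\id$. No gaps.
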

\begin{proof}
	Immediate from unraveling the definitions.
\end{proof}

\begin{theorem}\label{thm:Qequivalence}
	Let $\CC$ be a 2-category. Then, then functor $\widetilde{\cchi}_{\CC}$ extends to a Quillen equivalence
	\[
	\widetilde{\PPhi}_{\CC}: (\msSet)_{/\Nsc(\CC)} \llra \Fun_{\msSet}(\CC^{(\op,\op)}, \msSet): \widetilde{\cchi}_{\CC}.
	\]
\end{theorem}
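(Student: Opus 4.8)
The plan is to deduce the result from the known Quillen equivalence $\PPhi_{\CC} \dashv \cchi_{\CC}$ of \cite{ADS2Nerve} by comparing the two adjunctions through the natural transformation $\widetilde{\epsilon}\colon \widetilde{\PPhi} \nat \PPhi$. Since \autoref{thm:comparison} is itself to be proved using the present theorem, I must not invoke it; instead the comparison on the \emph{left}-adjoint side, governed by \autoref{lem:keyquillenlemma}, does the work. Throughout I use that in the scaled cartesian model structure on $(\msSet)_{/\Nsc(\CC)}$ the cofibrations are the monomorphisms, so that every object is cofibrant.

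First I would show that $\widetilde{\epsilon}_X\colon \widetilde{\PPhi}_{\CC}(X) \to \PPhi_{\CC}(X)$ is a (projective, i.e.\ levelwise) weak equivalence for every $X \in (\msSet)_{/\Nsc(\CC)}$. This is the technical heart. The base case is $\CC = \OO^n$ and $X = (\Delta^n)^{\flat}$: by \autoref{lem:keyquillenlemma} the map $\widetilde{\epsilon}_{\OO^n}((\Delta^n)^{\flat})$ is identified with $\theta_n\colon \mathfrak{O}^n \to \mathfrak{D}^n$, a levelwise weak equivalence by \autoref{lem:Joyal}. To pass to general $X$ over general $\CC$ I would use two ingredients. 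The base-change compatibility of $\widetilde{\cchi}$ (and its analogue for $\cchi$) under $2$-functors recorded above shows that $\widetilde{\PPhi}$ and $\PPhi$ commute, up to natural isomorphism, with pushforward (left Kan extension) along a $2$-functor $\sigma\colon \OO^n \to \CC$; since such a left Kan extension is left Quillen for the projective structures and $\mathfrak{O}^n, \mathfrak{D}^n$ are projectively cofibrant, it carries $\theta_n$ to a weak equivalence, so $\widetilde{\epsilon}$ is a weak equivalence on each simplex $(\Delta^n)^{\flat} \to \Nsc(\CC)$. A general $X$ is the colimit of its simplices along a skeletal filtration whose successive stages are pushouts along the cell inclusions $(\partial\Delta^n)^{\flat} \hookrightarrow (\Delta^n)^{\flat}$; as both $\widetilde{\PPhi}$ and $\PPhi$ are left adjoints, hence preserve these colimits, and preserve cofibrations (for $\widetilde{\PPhi}$ by \autoref{prop:trivfib}, for $\PPhi$ since it is left Quillen), left properness of the projective structure lets me propagate the equivalence $\widetilde{\epsilon}$ through the filtration by transfinite induction and the gluing lemma.

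With $\widetilde{\epsilon}$ a natural weak equivalence in hand, the adjunction is Quillen: $\widetilde{\PPhi}_{\CC}$ preserves cofibrations by \autoref{prop:trivfib}, and for a trivial cofibration $i\colon A \to B$ the square with vertical maps $\widetilde{\epsilon}_A, \widetilde{\epsilon}_B$ and bottom edge $\PPhi_{\CC}(i)$ --- a trivial cofibration since $\PPhi_{\CC}$ is left Quillen --- forces $\widetilde{\PPhi}_{\CC}(i)$ to be a weak equivalence by two-out-of-three, hence a trivial cofibration. Thus $\widetilde{\cchi}_{\CC}$ is right Quillen.

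Finally I would upgrade this to a Quillen equivalence. Since $\widetilde{\epsilon}$ is a weak equivalence on all cofibrant objects, the derived left adjoints agree, $\mathbb{L}\widetilde{\PPhi}_{\CC} \isom \mathbb{L}\PPhi_{\CC}$. As $\PPhi_{\CC} \dashv \cchi_{\CC}$ is a Quillen equivalence, $\mathbb{L}\PPhi_{\CC}$ is an equivalence of homotopy categories, hence so is $\mathbb{L}\widetilde{\PPhi}_{\CC}$; and a Quillen adjunction whose derived left adjoint is an equivalence is a Quillen equivalence. The main obstacle is the first step --- promoting the representable-level equivalence $\theta_n$ to a natural weak equivalence on all objects --- since it requires carefully controlling the interaction of the left Kan extensions along the classifying simplices $\sigma$ with the skeletal filtration, and verifying the relevant projective cofibrancy so that the gluing argument computes genuine homotopy colimits.
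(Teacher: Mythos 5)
Your proposal is correct and follows essentially the same route as the paper: the paper likewise reduces, via base-change compatibility of $\widetilde{\eta}$ and a cell-induction lemma (\autoref{lem:eq}, quoted from \cite{ADS2Nerve}), to the representable cases handled by \autoref{lem:keyquillenlemma}, and then concludes the Quillen equivalence from the resulting identification $\mathbb{L}\widetilde{\PPhi}_{\CC}\simeq\mathbb{L}\PPhi_{\CC}$. The one point to tighten in your cell-induction is that in $\msSet$ the skeletal filtration must also attach marking cells $(\Delta^1)^{\flat}\to(\Delta^1)^{\sharp}$, not only the boundary inclusions $(\partial\Delta^n)^{\flat}\to(\Delta^n)^{\flat}$ --- which is exactly why \autoref{lem:eq} additionally demands the equivalence on objects $(\Delta^1)^{\sharp}\to K$, the case supplied by $\theta_1^{\sharp}$ in \autoref{lem:keyquillenlemma}.
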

\begin{proof}
	We will show that for any object $\func{X \to \Nsc(\CC)}$ the map $\widetilde{\epsilon}_{\CC}(X)$ is a levelwise weak equivalence. Since $\widetilde{\cchi}_{\CC}$ preserves trivial fibrations by \autoref{prop:trivfib} this will in turn imply that $\widetilde{\cchi}_{\CC}$ preserves fibrations as well. In addition, we will have constructed an equivalence of left derived functors $\func{\mathbb{L}\widetilde{\PPhi}_{\CC} \nat \mathbb{L}\PPhi_{\CC}}$ yielding the result.
	
	It is not hard to show that the natural transformation $\widetilde{\eta}$ is compatible with base change. Therefore, invoking \autoref{lem:eq} below we reduce the problem to checking that $\widetilde{\epsilon}_{\OO^n}((\Delta^n)^{\flat})$ is a levelwise weak equivalence for $n\geq 0$ as well as $\widetilde{\epsilon}_{\OO^1}((\Delta^1)^{\sharp})$. This follows immediately from \autoref{lem:keyquillenlemma}.
\end{proof}

The main result of the section now follows as a corollary of the previous theorem.

\begin{corollary}\label{cor:fibres}
	Let $\mathbb{C}$ be a 2-category. Then, for every $\msSet$-enriched functor
	\[
	\func{F: \CC^{(\op,\op)} \to \Cat_{\infty}}
	\]
	the comparison map  $\func{\eta_{\CC}: \cchi_{\CC}(F) \to \widetilde{\cchi}_{\CC}(F)}$ is a weak equivalence of scaled cartesian fibrations over $\Nsc(\CC)$.
\end{corollary}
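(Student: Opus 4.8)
The plan is to deduce this formally from \autoref{thm:Qequivalence} by the calculus of mates, so that the only genuine input is the levelwise weak equivalence $\widetilde{\epsilon}$ produced in its proof.

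First I would observe that since $F$ takes values in $\Cat_\infty$---i.e. each $F(c)$ is a fibrant object of $\msSet$---the functor $F$ is fibrant in the projective model structure. As $\cchi_\CC$ and $\widetilde{\cchi}_\CC$ are right Quillen functors, both $\cchi_\CC(F)$ and $\widetilde{\cchi}_\CC(F)$ are then fibrant in the scaled cartesian model structure over $\Nsc(\CC)$, i.e. genuine scaled cartesian fibrations. Thus it suffices to show that the comparison map $\eta_\CC(F)=\widetilde{\eta}_F$ is a weak equivalence; as a map between fibrant objects this is exactly what is meant by a weak equivalence of scaled cartesian fibrations, so the scaled-cartesian decorations require no separate treatment.

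Next I would recall from the proof of \autoref{thm:Qequivalence} that $\widetilde{\epsilon}_\CC(X)$ is a levelwise weak equivalence for every $X\to \Nsc(\CC)$; in particular $\widetilde{\epsilon}$ is a weak equivalence on cofibrant objects, so $\mathbb{L}\widetilde{\PPhi}_\CC\nat \mathbb{L}\PPhi_\CC$ is a natural isomorphism of left derived functors on homotopy categories. The pair $\widetilde{\epsilon}\colon\widetilde{\PPhi}_\CC\nat\PPhi_\CC$ and $\widetilde{\eta}\colon\cchi_\CC\nat\widetilde{\cchi}_\CC$ is a mate pair for the adjunctions $\widetilde{\PPhi}_\CC\dashv\widetilde{\cchi}_\CC$ and $\PPhi_\CC\dashv\cchi_\CC$. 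Since both adjunctions are Quillen equivalences, the mate correspondence descends to derived functors, and the derived transpose of the natural isomorphism $\mathbb{L}\widetilde{\PPhi}_\CC\nat\mathbb{L}\PPhi_\CC$ is a natural isomorphism of right derived functors $\mathbb{R}\cchi_\CC\nat\mathbb{R}\widetilde{\cchi}_\CC$. Because right derived functors are computed directly on fibrant objects, without further replacement, this isomorphism is represented at $F$ by $\widetilde{\eta}_F$ itself, which is therefore a weak equivalence.

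The step I expect to require the most care is the identification of $\widetilde{\eta}_F$ with the derived transpose on fibrant objects. Expanding $\widetilde{\eta}_F$ as the mate composite $\widetilde{\cchi}_\CC(\epsilon^{\PPhi}_F)\circ\widetilde{\cchi}_\CC(\widetilde{\epsilon}_{\cchi_\CC F})\circ\eta^{\widetilde{\PPhi}}_{\cchi_\CC F}$, one must verify that the unit and counit factors become weak equivalences after suitable (co)fibrant replacement---this is exactly where the Quillen-equivalence hypothesis on both adjunctions enters---while the middle factor is a weak equivalence because $\widetilde{\epsilon}$ is levelwise and $\widetilde{\cchi}_\CC$ preserves weak equivalences between fibrant objects. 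A cleaner alternative, which I would prefer, is to invoke the general lemma that for a mate pair of two Quillen equivalences, one transformation is a weak equivalence on all cofibrant objects if and only if its mate is a weak equivalence on all fibrant objects, and apply it directly to $\widetilde{\epsilon}$ and $\widetilde{\eta}$. Everything else in the argument is formal.
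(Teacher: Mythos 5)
Your argument is correct, but it is a genuinely different route from the one taken in the paper. The paper proves this corollary by reducing to fibers via \cite[Lemma 3.2.25]{Lurie_Goodwillie}, specializing to $\CC=\ast$, and then comparing the chain $\EuScript{B}\to \cchi_{\ast}(\EuScript{B})\to\widetilde{\cchi}_{\ast}(\EuScript{B})$ with the $1$-categorical relative nerve, invoking \cite[Theorem 4.1.1]{ADS2Nerve} for the first map and concluding by 2-out-of-3; this has the side benefit of an explicit identification of the fibers over a point, which the paper reuses later when relating the section $\hat{s}_{\CC}$ to the localization map. Your proof instead stays global and purely formal: since every object of $(\msSet)_{/\Nsc(\CC)}$ is cofibrant, the statement that $\widetilde{\epsilon}_{\CC}$ is a levelwise weak equivalence (established in the proof of \autoref{thm:Qequivalence}) together with the conjugate/mate calculus yields that $\widetilde{\eta}$ is a weak equivalence on all projectively fibrant $F$, which is exactly the class of functors valued in $\Cat_\infty$; this avoids both the fiberwise criterion and the external input from \cite{ADS2Nerve}. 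The one ingredient you must still make precise is the lemma that a conjugate pair $\alpha\colon L_1\nat L_2$, $\beta\colon R_2\nat R_1$ of Quillen adjunctions has $\beta$ a weak equivalence on fibrant objects whenever $\alpha$ is one on cofibrant objects. This is true and has a short direct proof that sidesteps the derived-mate bookkeeping you were worried about: for $X$ cofibrant and $Y$ fibrant the conjugacy square identifies $(\beta_Y)_\ast\colon[X,R_2Y]\to[X,R_1Y]$ with $(\alpha_X)^\ast\colon[L_2X,Y]\to[L_1X,Y]$ on homotopy classes, the latter is a bijection because $\alpha_X$ is a weak equivalence between cofibrant objects and $Y$ is fibrant, and then the Yoneda lemma in $\on{ho}(\msSet)_{/\Nsc(\CC)}$ plus saturation of weak equivalences shows $\beta_Y$ is a weak equivalence. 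Note that this argument does not actually use that either adjunction is a Quillen equivalence, so your appeal to that hypothesis is harmless but unnecessary. With that lemma stated (or cited), your proof is complete.
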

\begin{proof}
	It follows from \autoref{thm:Qequivalence} that $\widetilde{\cchi}_{\CC}$ preserves fibrants objects. Let $\func{F:\CC^{(\op,\op)} \to \Cat_{\infty}}$ be a marked simplicially enriched functor. It will suffice by \cite[Lemma 3.2.25]{Lurie_Goodwillie} to show that the map,
	\[
	\func{\cchi_{\CC}(F) \to \widetilde{\cchi}_{\CC}(F)}
	\]
	is an equivalences upon passage to fibers. This allows us to reduce to the case where $\CC=*$ is the terminal category. For the rest of the proof $\EuScript{B}$ will denote the image of $F$ at the unique object of $*$.
	
	Recall that when $\CC=C$ is a 1-category we constructed in \cite{ADS2Nerve} a comparison map $\func{\chi_{C} \nat \cchi_{C}}$ with the relative nerve. Passing to adjoints we obtain the following natural transformations,
	\[
	\func{\widetilde{\PPhi}_{C} \nat \PPhi_{C} \nat \phi_C}
	\] 
	which are levelwise weak equivalences. Specializing to the case of $C=*$ and passing again to right adjoints we obtain the following morphisms
	\[
	\func{\EuScript{B} \to \cchi_{*}(\EuScript{B}) \to \widetilde{\cchi}_{*}(\EuScript{B})}.
	\]
	We proved in \cite[Theorem 4.1.1]{ADS2Nerve} that the first map is a weak equivalence. To check that the composite map is a weak equivalence we can pass to adjoints. Therefore, the result follows from 2-out-of-3.
\end{proof}

\begin{lemma}\label{lem:eq}
	Let $\overline{K}=(K,K^{\operatorname{t}})$ be a scaled simplicial set. Suppose we are given two left adjoint functors, 
	\[
	L_1,L_2: (\msSet)_{/\overline{K}} \to C,
	\]
	where $C$ is a left proper combinatorial model category and $L_2$ is a left Quillen functor. Suppose further that $L_1$ preserves cofibrations. Given a natural transformation
	$\eta: L_1 \Rightarrow L_2$ which is a weak equivalence on objects of the form
	\[
	(\Delta^n)^{\flat} \to K \quad \text{$n \geq 0$,}
	\]
	and	
	\[
	(\Delta^n)^{\sharp} \to K, \quad \text{$n=1$.}
	\]
	Then $\eta$ is a levelwise weak equivalence.
\end{lemma}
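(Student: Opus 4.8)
The plan is to run a cellular induction, using that $L_1$ and $L_2$ are both cocontinuous and preserve cofibrations, together with the left properness of $C$. The first thing I would record is a cofibrancy observation: in $(\msSet)_{/\overline{K}}$ the cofibrations are exactly the monomorphisms, so every object is cofibrant, and since $L_1,L_2$ preserve cofibrations and send the initial object $(\emptyset \to \overline{K})$ to the initial object of $C$, both $L_1(X)$ and $L_2(X)$ are cofibrant in $C$ for every $X$. This is the key point that will let me apply gluing-type arguments on the nose: because $L_1$ and $L_2$ preserve the strict pushouts and transfinite composites appearing below, and because all objects involved are cofibrant, those strict colimits compute the corresponding homotopy colimits, so no cofibrant replacement is ever needed.

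Next I would introduce the class $\mathcal{S}$ of objects $X \to \overline{K}$ for which $\eta_X \colon L_1(X) \to L_2(X)$ is a weak equivalence, and show it is closed under the colimit operations that assemble cell complexes: (i) coproducts, since $L_1,L_2$ preserve them and a coproduct of weak equivalences between cofibrant objects is a weak equivalence; (ii) pushouts along cofibrations, since applying $L_1$ and $L_2$ produces two pushout squares in $C$ along cofibrations of cofibrant objects, whence the left-proper gluing lemma (see \cite{Cisinski}) upgrades a levelwise weak equivalence of spans to a weak equivalence of pushouts; and (iii) transfinite composition of cofibrations, again because $L_1,L_2$ preserve these colimits and a levelwise weak equivalence between towers of cofibrations of cofibrant objects induces a weak equivalence on colimits in a left proper combinatorial model category.

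I would then check that a generating set of cofibrations has all its domains and codomains in $\mathcal{S}$. Over the base $\overline{K}$ the cofibrations are generated by $(\partial\Delta^n)^\flat \to (\Delta^n)^\flat$ for $n\geq 0$ and $(\Delta^1)^\flat \to (\Delta^1)^\sharp$, each equipped with its structure map to $\overline{K}$ (cf. \cite{HTT}). The codomains $(\Delta^n)^\flat$ and $(\Delta^1)^\sharp$ lie in $\mathcal{S}$ by hypothesis; the domain $(\Delta^1)^\flat$ is itself a flat simplex, hence in $\mathcal{S}$; and each boundary $(\partial\Delta^n)^\flat$ is built from the flat faces $(\Delta^m)^\flat$, $m<n$, by iterated pushouts along cofibrations, so closure properties (i)--(ii) place it in $\mathcal{S}$ as well. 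Since by the small object argument every object $X \to \overline{K}$ is a transfinite composite $\emptyset = X_0 \to X_1 \to \cdots$ of pushouts of coproducts of these generating cofibrations with $X = \on{colim}_\alpha X_\alpha$, a transfinite induction using (i)--(iii) yields $X \in \mathcal{S}$, so $\eta$ is a levelwise weak equivalence.

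The only point that requires genuine care --- and hence the main obstacle --- is the cofibrancy bookkeeping underpinning steps (ii) and (iii): one must be sure that all the objects $L_i(X_\alpha)$ are cofibrant, so that the strict pushouts and transfinite composites preserved by $L_1,L_2$ really do model homotopy colimits, and that left properness of $C$ is invoked in exactly the right place. Everything past that is a formal cellular induction, made possible by the fortunate fact that the listed test objects $(\Delta^n)^\flat$ and $(\Delta^1)^\sharp$ are precisely the codomains (and, after a one-step boundary argument, the domains) of a generating set of cofibrations.
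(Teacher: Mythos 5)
Your argument is correct, but it is worth noting that the paper does not prove this lemma at all: its entire proof is the citation \enquote{This is a special case of \cite[Lemma 4.3.3]{ADS2Nerve}.} What you have written is a direct reconstruction of the kind of cellular induction that the cited lemma encapsulates, and all the essential ingredients are in place: every object of $(\msSet)_{/\overline{K}}$ is cofibrant and $L_1,L_2$ preserve cofibrations and colimits, so the images of skeletal filtrations and cell attachments are genuine homotopy colimit presentations in $C$; the class $\mathcal{S}$ is closed under coproducts, pushouts along cofibrations, and transfinite composition by the cube lemma and its tower analogue; and the test objects in the hypothesis are exactly the codomains of the generating cofibrations $(\partial\Delta^n)^\flat\to(\Delta^n)^\flat$ and $(\Delta^1)^\flat\to(\Delta^1)^\sharp$, with the domains reached by a secondary induction on boundaries. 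Two small remarks. First, the small object argument only exhibits a general object as a \emph{retract} of a cell complex; here this is harmless either because every monomorphism of marked simplicial sets is literally a relative cell complex (skeletal filtration followed by marking edges), or because $\mathcal{S}$ is closed under retracts since weak equivalences are --- but one of these two sentences should be said. Second, once you have observed that every object in sight is cofibrant, left properness of $C$ is not actually needed: the cube lemma and the tower lemma hold for cofibrant objects in any model category, so your proof in fact uses slightly weaker hypotheses than the statement provides.
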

\begin{proof}
	This is a special case of \cite[Lemma 4.3.3]{ADS2Nerve}.
\end{proof}

\section{Fibrations and sections}\label{sec:sec}

The proof of the main theorem of this paper will depend heavily on the properties of fibrations of simplicial sets. In the 1-categorical case, these would be Cartesian fibrations, but in full 2-categorical generality, our Grothendieck construction produces \emph{scaled Cartesian fibrations}. 

\begin{definition}
	Let $p:X\to Y$ be a map of simplicial sets. We call $p$ a \emph{locally Cartesian fibration} if $p$ is an inner fibration and, for every edge $\sigma:\Delta^1\to S$, the pullback $\func{p\times_Y \sigma: X\times_Y \Delta^1\to \Delta^1}$ is a Cartesian fibration. 
	
	We call an edge $f:\Delta^1\to X$ \emph{locally Cartesian} if it is a Cartesian edge of $p\times_Y (p\circ f)$.
\end{definition} 

\begin{remark}
	Dualizing the discussion in Example 3.2.9 of \cite{Lurie_Goodwillie}, we note that every scaled Cartesian fibration is in particular locally Cartesian. Also note that given a locally Cartesian fibration $p:X\to Y$, a morphism $f:a\to b$ in $Y$, and an object $\tilde{y}\in X_y$, there is a locally Cartesian morphism $\tilde{f}:\tilde{x}\to \tilde{y}$ lifting $f$. 
\end{remark}

We can now formulate and prove the property of locally Cartesian fibrations which will form the backbone of our proof of the main theorem. 

\begin{proposition}\label{prop:locCartTrivKanFib}
	Let $p:X\to S$ be a locally Cartesian fibration of simplicial sets. Assume that for each vertex $s\in S$, the $\infty$-category $X_s$ has an initial object. Denote by $X^\prime\subset X$ the full simplicial subset of $X$ spanned by those $x$ which are initial objects in $X_{p(x)}$. Then 
	\[
	p|_{\scr{X}^\prime}:\scr{X}^\prime\to S
	\]
	is a trivial Kan fibration of simplicial sets. Moreover, a section $q$ of $p:X\to S$ is initial in the $\infty$-category $\on{Map}_S(S,X)$ if and only if $q$ factors through $X^\prime$.  
\end{proposition}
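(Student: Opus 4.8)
The plan is to establish the two assertions in turn, deriving the characterization of initial sections from the trivial-fibration statement together with the uniqueness of initial objects. For the first assertion I would not verify the right lifting property against boundary inclusions directly, but instead show that $p|_{X'}$ is a \emph{left fibration with contractible fibers} and invoke the standard fact that such a map is automatically a trivial Kan fibration. Since $X'\subseteq X$ is a full simplicial subset and $p$ is an inner fibration, $p|_{X'}$ is itself an inner fibration, so all inner horns already lift; moreover its fibers are the full subcategories $X'_s\subseteq X_s$ spanned by the initial objects, which are nonempty by hypothesis and hence contractible Kan complexes. It therefore remains only to lift the outer horns $\Lambda^n_0\hookrightarrow\Delta^n$ for $n\geq 1$.

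The key computation — where the initiality hypothesis enters — concerns exactly these outer horns. After pulling an arbitrary lifting problem back along the given map $\Delta^n\to S$ we may assume $S=\Delta^n$, and the initial edge of the horn $\Lambda^n_0$ then emanates from a vertex $x$ that is initial in its fiber. For an edge $e:s\to s'$ of $S$ and \emph{any} object $y\in X_{s'}$, pulling $p$ back to $\Delta^1$ along $e$ and using the transition functor $e^{*}:X_{s'}\to X_s$ supplied by the locally Cartesian lifts identifies the space of edges $x\to y$ lying over $e$ with $\on{Map}_{X_s}(x,e^{*}y)$, which is contractible because $x$ is initial in $X_s$. Taking $y$ to be the initial object of $X_{s'}$ produces the required lift into $X'$, and the same contractibility statement applied over higher simplices fills the outer horns. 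I expect the main technical obstacle to be precisely this variance bookkeeping: the transition functors of a locally Cartesian fibration run opposite to the edges of $S$, so one must be careful about directions — the saving grace is that initiality of the \emph{source} annihilates the relevant mapping spaces regardless of the direction of transport.

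Granting the first assertion, a section $q$ of $p$ factors through $X'$ exactly when it is a section of the trivial Kan fibration $p|_{X'}$, and such sections form a full subcategory $\on{Map}_S(S,X')\subseteq \on{Map}_S(S,X)$ which is a contractible Kan complex (postcomposition with the fully faithful inclusion $X'\hookrightarrow X$ is fully faithful on section $\infty$-categories, and the space of sections of a trivial Kan fibration is contractible). To prove the nontrivial implication, that any such $q$ is \emph{initial} in $\on{Map}_S(S,X)$, I would compute, for an arbitrary section $q'$, the mapping space in the section category as the end $\int_{s\in S}\on{Map}_{X_s}(q(s),q'(s))$ of the fiberwise mapping spaces. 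Since $q$ factors through $X'$, each factor $\on{Map}_{X_s}(q(s),q'(s))$ is contractible, and an end of a diagram valued in contractible spaces is contractible; hence $\on{Map}_{\on{Map}_S(S,X)}(q,q')\simeq\ast$ and $q$ is initial.

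Finally, the converse follows formally. If $q$ is initial in $\on{Map}_S(S,X)$, then the first assertion produces a section $q_0$ factoring through $X'$, which is initial by the previous paragraph; by uniqueness of initial objects $q\simeq q_0$. Equivalent sections agree pointwise up to equivalence in each fiber, and initiality of an object of $X_s$ is invariant under equivalence, so $q(s)$ is initial in $X_s$ for every vertex $s$. As $X'$ is the full simplicial subset spanned by such vertices, every simplex of $q$ has all vertices in $X'$ and therefore lies in $X'$, so $q$ factors through $X'$, completing the proof.
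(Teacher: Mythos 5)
Your argument is self-contained where the paper's is not: the paper simply observes that the proof of \cite[Prop.~2.4.4.9]{HTT} goes through verbatim once one notes that the only place the Cartesian hypothesis is used is to produce a (locally) Cartesian lift of an edge ending at a prescribed object, to which Lemma~2.4.4.2 of loc.~cit.\ is then applied. Your first half is a genuinely different decomposition of that argument: instead of filling $\partial\Delta^n\hookrightarrow\Delta^n$ directly (which is what initiality of the image of the vertex $0$ in the pullback $X\times_S\Delta^n$ buys immediately), you route through ``left fibration with contractible fibers implies trivial fibration''. This can be made to work, and the key computation --- $\on{Map}^e_X(x,y)\simeq\on{Map}_{X_s}(x,e^*y)\simeq\ast$ for $x$ fiberwise initial, via a locally Cartesian lift of $e$ --- is exactly the point the paper's remark about local Cartesianness addresses. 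But note that for $n\geq 2$ the horn $\Lambda^n_0$ is \emph{not} filled merely because the image of $0$ is initial in $X\times_S\Delta^n$: the missing face $d_0\Delta^n$ does not contain that vertex. You need Joyal's criterion that an outer horn fills along an inner fibration when $\Delta^{\{0,1\}}$ maps to a coCartesian edge, together with a verification that edges of $X'$ become coCartesian after pullback to $\Delta^n$. Your mapping-space computation does supply this, but the sentence ``the same contractibility statement applied over higher simplices fills the outer horns'' is hiding that step; the direct $\partial\Delta^n$-filling route avoids it entirely.

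The second half has a genuine gap. You compute $\on{Map}_{\on{Map}_S(S,X)}(q,q')$ as an end of the \emph{fiberwise} mapping spaces $\on{Map}_{X_s}(q(s),q'(s))$. Even for an honest functor category the correct integrand is indexed by the twisted arrow category and involves mapping spaces $\on{Map}(Fc,Gc')$ for \emph{morphisms} $c\to c'$; for sections of a fibration the analogue is the space of morphisms $q(s)\to q'(s')$ in $X$ lying over each edge $e\colon s\to s'$ of $S$, not the pointwise fiber mapping spaces. (Those over-an-edge spaces are still contractible by your own computation, so the conclusion would survive the correction.) More seriously, such a limit formula for mapping spaces between sections of a locally Cartesian fibration over an arbitrary simplicial set is itself a nontrivial theorem --- at least as hard as the statement being proved --- and cannot be taken off the shelf here. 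This is precisely where the source proof does its combinatorial work: it reduces the lifting problem for $\partial\Delta^n\times S\hookrightarrow\Delta^n\times S$, cell by cell in $S$, to fillings of $\partial\Delta^k\hookrightarrow\Delta^k$ whose initial vertex lands in $X'$, i.e.\ to the relative form of the first half. Either supply a proof of the end formula in this generality or replace that paragraph with the cell-by-cell argument; the converse direction in your final paragraph is fine once this is repaired.
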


Before we continue with the proof, note that this is identical to \cite[Prop. 2.4.4.9]{HTT} in every way except that we only require $p:X\to Y$ to be locally Cartesian. 

\begin{proof}
	The proof is effectively the same as that of \cite[Prop. 2.4.4.9]{HTT}. We comment on the points which differ. Tracing back through a sequence of lemmata\footnote{The dependency graph is 2.4.4.2$\to$2.4.4.7$\to$2.4.4.8$\to$2.4.4.9} we find that the only point where the fact that $p$ is a Cartesian fibration is used is to find a Cartesian lift of a morphism ending at a given object, and then apply Lemma 2.4.4.2. However, Lemma 2.4.4.2 only requires a \emph{locally} Cartesian lift, and so the proof runs through.  
\end{proof}

\subsection{Constructing the section}

We are now in the setting that we need, we can begin to perfom the key constructions needed in our proof . For the rest of this section, we fix a marked 2-category $\CC^{\dagger}$.

Consider the $\msSet$-enriched functor \glsadd{CCslash}
\[
\func*{\mathfrak{C}_{\CC \downslash}: \CC^{(\op,\op)} \to \msSet; c \mapsto \NN \left(\CC^{\dagger}_{c \downslash}\right) }
\]
and denote its image under  $\widetilde{\cchi}_{\CC}$ by $\func{p:\widetilde{\cchi}\left(\mathfrak{C}_{\CC \downslash}\right)\to \Nsc(\CC)}$. We will  define a  canonical section to  the map $p$.  

Let $\sigma:\OO^n\to \CC$ represent a simplex in $\Nsc(\CC)$. Given $I \subseteq [n]$ denote as usual $i=\min(I)$ and consider the following commutative diagram
\[
\begin{tikzcd}[ampersand replacement=\&]
\left(\OO^{I}_{i\downslash}\right)^{\flat} \arrow[rr,"\tau^{I}_{\sigma}"] \arrow[d] \&  \& \CC^{\dagger}_{\sigma(i)\downslash} \arrow[d] \\
\left(\OO^{I}\right)^{\flat} \arrow[r] \& \left(\OO^{n}\right)^{\flat} \arrow[r,"\sigma"] \&  \CC^{\dagger}
\end{tikzcd}
\]

The properties of the previous construction can be sumarized in the following proposition.
\begin{proposition}
	The assignment $\func{(\sigma: \OO^n \to \CC) \to (\sigma,\set{\tau_{\sigma}^{I}}_{I \subseteq [n]})}$ defines a map of simplicial sets
	\[
	\func{s_{\CC}:\Nsc(\CC) \to \widetilde{\cchi}_{\CC}(\mathfrak{C}_{\CC \downslash})}
	\]
	such that $p \circ s_{\CC}=\operatorname{id}$. The map $s_{\CC}$ sends an object $c\in \CC$ to the pair $(c,\on{id}_c)$.
\end{proposition}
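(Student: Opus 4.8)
The plan is to verify that $s_{\CC}$ is well defined by exhibiting each $(\sigma,\{\tau^I_\sigma\}_I)$ as a genuine simplex in the sense of \autoref{def:chitilde}, to check that this assignment is natural in $[n]$, and then to read off the two remaining assertions. First I would pin down $\tau^I_\sigma$ conceptually. Writing $\sigma_I$ for the composite $\OO^I\hookrightarrow\OO^n\xrightarrow{\sigma}\CC$, which carries $i=\min(I)$ to $\sigma(i)$, the functoriality of lax slices produces a normal lax functor $(\sigma_I)_{\downslash}\colon \OO^I_{i\downslash}\to\CC_{\sigma(i)\downslash}$, and $\tau^I_\sigma$ is its $2$-nerve. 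The defining square of $\tau^I_\sigma$ commutes exactly because the forgetful $2$-functors $\CC_{c\downslash}\to\CC$ assemble into a natural transformation out of the slice construction; and since the source carries the flat marking, $\tau^I_\sigma$ preserves markings automatically, so it is a legitimate map into $\NN(\CC^\dagger_{\sigma(i)\downslash})=\mathfrak{C}_{\CC\downslash}(\sigma(i))$. (Because $\sigma$ is strict, its compositor is trivial, so $(\sigma_I)_\downslash$ is in fact strict; we will not need this.)

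The substantive step is the coherence condition of \autoref{def:chitilde}: for every $J\subseteq I\subseteq[n]$ the square assembled from $\widetilde{\rho}_{J,I}$, from $\theta_I=\tau^I_\sigma$ and $\theta_J=\tau^J_\sigma$, and from the structure map of $\mathfrak{C}_{\CC\downslash}$ must commute. My approach is to note that every arrow in this square is the $2$-nerve of a (lax) functor of $2$-categories --- $\widetilde{\rho}_{J,I}$ by \autoref{rem:lift}, $\Nerv(\sigma)$ tautologically, the two $\theta$'s by the previous paragraph, and the structure map because the enrichment of $\mathfrak{C}_{\CC\downslash}$ is itself induced by the (precomposition) functoriality of slices. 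Since $\NN$ is fully faithful and carries products to products, commutativity of the square of marked simplicial sets is equivalent to commutativity of the corresponding square of lax functors between the $2$-categories $\OO^I(\min I,\min J)^{\op}\times\OO^J_{j\downslash}$ (with $j=\min J$), $\OO^I_{i\downslash}$, and so on. This I would check by unwinding the explicit description of $\widetilde{\rho}_{J,I}$ from \autoref{rem:lift} (on objects, $(L,S)\mapsto L\cup S$) against precomposition, separately on objects, $1$-morphisms and $2$-morphisms, using strictness of $\sigma$ to control the compositors and keeping track of the $2$-morphism dual that produces the $\op$ on hom-categories. I expect this to be the main obstacle: it is the point where the combinatorics of the slices $\OO^I_{i\downslash}$ meet both the lax-slice structure and the variance of $\mathfrak{C}_{\CC\downslash}$, and it is the exact analogue of the compatibility check carried out for $\cchi$ in \cite{ADS2Nerve}, with \autoref{rem:lift} supplying precisely the lift that makes it go through.

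To see that $s_\CC$ is a map of simplicial sets I would verify naturality under an operator $\alpha\colon[m]\to[n]$. The induced map on $\widetilde{\cchi}_\CC(\mathfrak{C}_{\CC\downslash})$ sends $(\sigma,\{\theta_I\})$ to $\sigma\circ\OO^\alpha$ together with the family obtained by restricting each $\theta_{\alpha(K)}$ along the slice map induced by $\OO^\alpha\colon\OO^K\to\OO^{\alpha(K)}$. Since $\tau^K_{\sigma\circ\OO^\alpha}$ is built from $(\sigma\circ\OO^\alpha)_K=\sigma_{\alpha(K)}\circ\OO^\alpha|_{\OO^K}$, the required identity is precisely the composability of the pointed slice functor $\Lcat_\ast\to\Lcat$. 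The equality $p\circ s_\CC=\id$ is then immediate, as $p$ is the forgetful map discarding the data $\{\theta_I\}$. Finally, a $0$-simplex is the choice of an object $c=\sigma(0)$; the only nonempty subset is $\{0\}$, the slice $\OO^{\{0\}}_{0\downslash}$ is the terminal $2$-category, and $(\sigma_{\{0\}})_\downslash$ carries its unique object $\id_0$ to $\sigma(\id_0)=\id_c$, so $s_\CC(c)=(c,\id_c)$ as claimed.
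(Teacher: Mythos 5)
Your argument is correct, and the two remaining assertions ($p\circ s_{\CC}=\id$ and the value on objects) are handled exactly as the paper would handle them. Where you differ is in the organization: the paper proves this proposition in one line, as a special case of \autoref{prop:homotopyofsections}, i.e.\ it first builds the homotopy $H$ out of the partial collapses $\on{P}^{i}\OO^n$ of \autoref{cons:partialcollapse} and verifies the compatibility squares there (on objects, then 1-morphisms, then 2-morphisms, with simplicial functoriality coming from \autoref{lem:collapsefunctoriality}); the section $s_{\CC}$ is then the extremal case $\on{P}^{n+1}\OO^n=\OO^n$. You instead prove the statement directly and self-containedly: identifying $\tau^I_\sigma$ as the 2-nerve of the slice functor $(\sigma|_{\OO^I})_{\downslash}$, reducing the coherence square of \autoref{def:chitilde} to a square of 2-functors via full faithfulness and product-preservation of $\Nerv_2$, and extracting simplicial naturality from the functoriality of $\Lcat_\ast\to\Lcat$. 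This avoids the collapse machinery entirely (the paper needs that machinery anyway for \autoref{cor:sectiongivesID}, which is why it gets the section essentially for free), and your route is arguably cleaner for this statement in isolation. One point to tighten: in your 2-morphism step the target $\CC_{\sigma(\min I)\downslash}$ is not poset-enriched, so agreement of the two composites on 1-morphisms does not by itself force agreement on 2-morphisms. You need the additional (easy) observation that a 2-morphism of a lax slice is determined by its underlying 2-morphism in $\CC$; after projecting to $\CC$ both composites factor through the poset-enriched $\OO^n$, and the 2-morphism level becomes automatic --- this is precisely the device the paper deploys in the proof of \autoref{prop:homotopyofsections}. The object/1-morphism computation itself (both composites send $(L,S)$ to $\sigma(L\cup S)$, and similarly for inclusions) is only sketched in your write-up, but it is routine and goes through as you describe.
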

\begin{proof}
	This is a special case of \autoref{prop:homotopyofsections} below. 
\end{proof}

%\begin{lemma}
%	For every $\sigma:\OO^n\to \CC$ and every $\emptyset\neq J\subset I\subset [n]$, the diagrams 
%	\[
%	\begin{tikzcd}
%	\OO^I(\min(I),\min(J))^{\op} \times \tilde{D}^J\arrow[r]\arrow[d,"\sigma\times \tau^J"'] & \tilde{D}^I\arrow[d,"\tau^I"]\\
%	\CC(\sigma(\min(I)),\sigma(\min(J))^\op \times \CC_{(\sigma(\min(J)))\downslash}\arrow[r] & \CC_{(\sigma(\min(I)))\downslash}
%	\end{tikzcd}
%	\]
%	commute. 
%\end{lemma}

%\begin{proof}
%	The proof is a straightforward diagram chase, using the strict 2-functoriality of $\sigma$. 
%\end{proof}

%\begin{lemma}
%	Let $[m]\overset{f}{\to}[n]$, $\sigma:\OO^n\to \CC$ and $I\subset [n]$. Let $\hat{f}:\tilde{D}^{f^{-1}(I)}\to \tilde{D}^{I}$ be the induced map. Then $\tau^{I}\circ\hat{f}=\tau^{f^{-1}(I)}$. 
%\end{lemma}

%\begin{proof}
%	Immediate from the definitions. 
%\end{proof}

%\begin{corollary}
%	For the functor $F$ described in Construction \ref{const:overcatrel2nerv}, there is a canonical section $\func{s_{\CC}:N_2(\CC)\to \widetilde{\cchi}(F)}$ of the relative 2-nerve $p:\widetilde{\cchi}(F)\to \Nerv_2(\CC)$. The functor $s_{\CC}$ sends an object $c\in \CC$ to the pair $(c,\on{id}_c)$.   
%\end{corollary} 

We then compose the $\infty$-categorical localization functor $L_{\mathcal{W}}$ with the functor $\mathfrak{C}_{\CC \downslash}$ to get a projectively fibrant functor $\hat{\mathfrak{C}}_{\CC \downslash}=L_{\mathcal{W}}\circ \mathfrak{C}_{\CC \downslash}$. Denote by  $\func{\hat{p}:  \hat{\mathfrak{C}}_{\CC \downslash} \to \Nsc(\CC)}$ its image under $\widetilde{\cchi}_{\CC}$ and observe  that $\hat{p}$ is a scaled cartesian fibration. Moreover, we obtain a natural transformation
\[
\func{\theta: \mathfrak{C}_{\CC \downslash} \to \hat{\mathfrak{C}}_{\CC \downslash}}
\] 
of $\msSet$-enriched functors. We define our desired section to be $\hat{s}_{\CC}=\widetilde{\cchi}_{\CC}(\theta) \circ s_{\CC}$.

\begin{corollary}\label{cor:inital}
	There is a map of simplicial sets $\func{\hat{s}_{\CC}:\Nsc(\CC)\to \widetilde{\cchi}\left(\hat{\mathfrak{C}}_{\CC \downslash}\right)}$ such that $\hat{p} \circ \hat{s}_{\CC}=\on{id}$. The section sends an object $c\in \CC$ to the pair $(c,\on{id}_c)$.   
\end{corollary}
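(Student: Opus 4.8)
The plan is to treat this as a formal consequence of the construction of $\hat{s}_{\CC}$, together with the functoriality of $\widetilde{\cchi}_{\CC}$ and the preceding proposition, which already supplies the honest section $s_{\CC}$ with $p\circ s_{\CC}=\id$. The essential input is that $\widetilde{\cchi}_{\CC}$ is, by \autoref{thm:Qequivalence}, a functor landing in the slice category $(\msSet)_{/\Nsc(\CC)}$; applying it to the natural transformation $\theta\colon \mathfrak{C}_{\CC \downslash}\to \hat{\mathfrak{C}}_{\CC \downslash}$ therefore produces a morphism $\widetilde{\cchi}_{\CC}(\theta)$ \emph{over} $\Nsc(\CC)$.

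First I would record the compatibility with the projections. Since $\widetilde{\cchi}_{\CC}(\theta)$ is a map in the slice, it commutes with the structure maps, i.e.\ $\hat{p}\circ \widetilde{\cchi}_{\CC}(\theta)=p$. Concretely, inspecting \autoref{def:chitilde}, the map $\widetilde{\cchi}_{\CC}(\theta)$ acts on an $n$-simplex $(\sigma,\{\theta_I\})$ by post-composing each fibre datum $\theta_I$ with the appropriate component of $\theta$, while leaving the underlying simplex $\sigma$ of $\Nsc(\CC)$ untouched. Granting this, the section property is immediate:
\[
\hat{p}\circ \hat{s}_{\CC}=\hat{p}\circ \widetilde{\cchi}_{\CC}(\theta)\circ s_{\CC}=p\circ s_{\CC}=\id.
\]

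It then remains to identify the image of an object $c\in\CC$. By the preceding proposition, $s_{\CC}$ sends $c$ to the $0$-simplex $(c,\id_c)$, whose only datum is the map $\theta_{\{0\}}\colon \NN(\OO^{\{0\}}_{0\downslash})^{\flat}\cong \Delta^0\to \NN(\CC^{\dagger}_{c\downslash})$ selecting the object $\id_c$ of the lax slice $\CC^{\dagger}_{c\downslash}$. Applying $\widetilde{\cchi}_{\CC}(\theta)$ post-composes this datum with the component of $\theta$ at $c$, which is exactly the localization map $\gamma\colon \NN(\CC^{\dagger}_{c\downslash})\to L_{\mathcal{W}}(\NN(\CC^{\dagger}_{c\downslash}))$. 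Hence $\hat{s}_{\CC}$ carries $c$ to the pair $(c,\gamma(\id_c))$, which under the standing convention of denoting the image of an object along the localization by the same symbol is precisely $(c,\id_c)$.

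I do not expect any genuine obstacle: the content is purely formal, flowing from the naturality of $\theta$ and the slice-valued functoriality of $\widetilde{\cchi}_{\CC}$ secured in \autoref{thm:Qequivalence}. The only step requiring a moment of care is unwinding \autoref{def:chitilde} on a $0$-simplex so as to pin down the effect of $\widetilde{\cchi}_{\CC}(\theta)$ on objects, and this is a direct check rather than a difficulty.
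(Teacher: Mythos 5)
Your argument is correct and is exactly the (implicit) reasoning the paper relies on: $\hat{s}_{\CC}$ is defined as $\widetilde{\cchi}_{\CC}(\theta)\circ s_{\CC}$, the map $\widetilde{\cchi}_{\CC}(\theta)$ lives over $\Nsc(\CC)$ so the section property is inherited from $p\circ s_{\CC}=\id$, and the value on objects follows by unwinding the $0$-simplex data. The paper states the corollary without a separate proof precisely because it is this formal consequence of the construction, so your write-up matches its approach.
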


\subsection{The section as a localization map}

We now want to associate a composite of the section $\hat{s}_{\CC}:\Nsc(\CC)\to \widetilde{\cchi}\left(\hat{\mathfrak{C}}_{\CC \downslash}\right)$ with the localization map $\Nsc(\CC)\to L_W(\Nsc(\CC))$. Observe that we have the following commutative diagram of marked simplicial sets over $\Nsc(\CC)$

\[
\begin{tikzcd}[ampersand replacement=\&]
\widetilde{\cchi}_{\CC}\left( \mathfrak{C}_{\CC \downslash} \right) \arrow[r] \arrow[d] \& \NN\left(\CC\right) \times \widetilde{\cchi}_{*}\left(  \NN\left(\CC\right) \right) \arrow[d] \\
\widetilde{\cchi}_{\CC}\left( \hat{\mathfrak{C}}_{\CC \downslash} \right) \arrow[r] \&  \NN\left(\CC\right) \times \widetilde{\cchi}_{*}\left(L_W(\CC^{\dagger})\right)
\end{tikzcd}
\]
where we are implicity using the fact that for every constant $\msSet$-enriched functor $\widetilde{\cchi}_{\CC}(\underline{X})\isom \CC \times \widetilde{\cchi}_{*}(X)$. Our goal is to show that the composite map
\[
\begin{tikzcd}[ampersand replacement=\&]
\NN(\CC^{\dagger}) \arrow[r,"\hat{s}_{\CC}"] \& \widetilde{\cchi}_{\CC}\left( \hat{\mathfrak{C}}_{\CC \downslash} \right) \arrow[r,"\hat{\rho}_{\CC}"] \&  \widetilde{\cchi}_{*}\left(L_W(\CC^{\dagger})\right) 
\end{tikzcd}
\]
is a weak equivalence of marked simplicial sets. Let $\CC=*$, in \autoref{cor:fibres} we saw that the map of marked simplicial sets $\func{\NN(\CC^{\dagger}) \to L_W\left(\NN(\CC^{\dagger})\right)}$ induces a commutative diagram in $\msSet$

\[
\begin{tikzcd}[ampersand replacement=\&]
\NN(\CC^{\dagger}) \arrow[r] \arrow[d,"\simeq"] \& \widetilde{\cchi}_{*}\left( \NN(\CC^{\dagger})\right) \arrow[d] \\
L_W\left( \CC^{\dagger} \right) \arrow[r,"\simeq"] \& \widetilde{\cchi}_{*}\left(L_W\left( \CC^{\dagger} \right) \right)
\end{tikzcd}
\]
where the bottom row is a weak equivalence. To show that $\rho_{\CC}\circ s_{\CC}$ is a weak equivalence it will suffice to show that the following diagram commutes up to natural equivalence

\[
\begin{tikzcd}
& \widetilde{\cchi}_\CC\left(\mathfrak{C}_{\CC\downslash}\right)\arrow[d]\arrow[dd,out=-30, in=30, "\rho_{\CC}"]\\
N_2( \CC)\arrow[ur,"s_{\CC}"]\arrow[r]\arrow[d,"\simeq"] & \widetilde{\cchi}_{\ast}\left(\NN( \CC)\right)\arrow[d]\\
L_W \left( N_2( \CC)\right)\arrow[r,"\simeq"'] & \widetilde{\cchi}_\ast\left(L_W(\CC^{\dagger}) \right) 		
\end{tikzcd}
\]
The rest of this section is consequently devoted to produce a natural equivalence exhibiting commutativitivy of the upper triangle in the diagram above.

\begin{construction}\label{cons:partialcollapse}
	A simplex $i:\Delta^{n}\to \Delta^1$ is specified by a sequence 
	\[
	\overset{0}{0}\;0\;\cdots \;0\;\overset{i}{1}\;\cdots \; \overset{n}{1}
	\] 
	or simply by the number $0\leq i\leq n+1$. Given such a simplex, we define a 2-category $\on{P}^i\OO^n$, the \emph{$i$\textsuperscript{th} partial collapse of $\OO^n$} as follows.
	\begin{itemize}
		\item The objects of $\on{P}^i\OO^n$ are the objects of $\OO^n$ 
		\item The hom-categories of $\on{P}^i\OO^n$ are given by 
		\[
		\on{P}^i\OO^n(\ell,j)=\begin{cases}
		\emptyset & \ell> j\\
		\OO^n(\ell,j) & \ell \leq j < i\\
		\ast & \ell \leq i\leq j 
		\end{cases}
		\]
	\end{itemize} 
	The composition functors are those of $\OO^n$ where applicable, and the unique functor to $\ast$ everywhere else. 
	
	There is a canonical (strict) projection 2-functor $p_{i,n}:\func{\OO^n\to \on{P}^i\OO^n}$, and a canonical normal lax functor 
	\[
	\func*{\ell_{i,n}:\on{P}^i\OO^n\to \OO^n}
	\]
	which acts as the identity on objects, the identity on $\OO^n$ hom-categories, and, for $j\geq i$, sends 
	\[
	\func{({\ast:\ell\to j})\mapsto ({\{\ell,j\}:\ell\to j}).}
	\]
	Note that this functor fits into a (strictly) commutative diagram 
	\[
	\begin{tikzcd}
	\OO^{[0,i)}\arrow[dr,hookrightarrow]\arrow[d,hookrightarrow] & \\
	\on{P}^i\OO^n\arrow[r, "\ell_{i,n}"'] & \OO^n \\
	\Delta^{[i,n]}\arrow[u,hookrightarrow]\arrow[r, "\text{lax}"'] & {\OO^{[i,n]}}\arrow[u,hookrightarrow]
	\end{tikzcd}
	\]
\end{construction}

\begin{remark}
	Note that in the two extremal cases, we have $\on{P}^0\OO^n=\Delta^n$ and $\on{P}^{n+1}\OO^n=\OO^n$. We additionally have $p_{n+1,n}=\ell_{n+1,n}=\id_{\OO^n}$. 
\end{remark}

\begin{proposition}\label{prop:collapsedelta1}
	The assignment $\func{(i:\Delta^n \to \Delta^1) \to \on{P}^{i}\OO^n}$ described in \autoref{cons:partialcollapse} extends to a functor
	\[
	\func{\on{P}\OO^{\bullet}: \Delta_{/[1]} \to \Lcat.}
	\]
\end{proposition}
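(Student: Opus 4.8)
The plan is to use the cosimplicial structure of $\OO^\bullet$ from \autoref{defn:OI} on the ``uncollapsed part'' of each partial collapse, and the uniqueness of maps into a terminal category on the ``collapsed part,'' checking along the way that the resulting 2-functors are in fact strict. First I would spell out $\Delta_{/[1]}$: its objects are the monotone maps $u\colon [n]\to[1]$, which correspond bijectively to thresholds $0\leq i\leq n+1$ via $u^{-1}(0)=\{0,\dots,i-1\}$ — exactly the indexing of \autoref{cons:partialcollapse}. A morphism from $u\colon[n]\to[1]$ (threshold $i$) to $v\colon[m]\to[1]$ (threshold $j$) is a monotone map $\alpha\colon[n]\to[m]$ with $v\circ\alpha=u$. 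The crucial observation is that this last relation is equivalent to the \emph{threshold-preservation} condition $\alpha(\ell)\geq j \iff \ell\geq i$; equivalently, $\alpha$ carries $\{0,\dots,i-1\}$ into $\{0,\dots,j-1\}$ and $\{i,\dots,n\}$ into $\{j,\dots,m\}$.

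Using this, I would define $\on{P}\OO(\alpha)\colon \on{P}^i\OO^n\to\on{P}^j\OO^m$ to be $\alpha$ on objects, and on a hom-category $\on{P}^i\OO^n(\ell,k)$ (here $\ell,k$ denote objects and $i,j$ the two thresholds) I distinguish two cases according to the target object $k$. If $k<i$, then by threshold-preservation $\alpha(k)<j$ as well, so both source and target hom-categories coincide with the genuine hom-categories of $\OO^n,\OO^m$, and I use the functor $\OO(\alpha)\colon \OO^n(\ell,k)\to\OO^m(\alpha\ell,\alpha k)$, $S\mapsto\alpha(S)$, supplied by the cosimplicial structure of $\OO^\bullet$. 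If $k\geq i$, then $\alpha(k)\geq j$, both hom-categories are $\ast$, and the map is forced. Threshold-preservation is precisely what makes this case distinction consistent between source and target. To see that $\on{P}\OO(\alpha)$ is a strict (hence a fortiori normal lax) 2-functor, I check compatibility with composition $\on{P}^i\OO^n(\ell,k)\times\on{P}^i\OO^n(k,k')\to\on{P}^i\OO^n(\ell,k')$: when $k'\geq i$ the target hom-category is terminal and the relevant square commutes automatically, while when $k'<i$ all three hom-categories are genuine $\OO$-hom-categories, composition is union of subsets, and commutativity is the identity $\alpha(S\cup T)=\alpha(S)\cup\alpha(T)$. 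Normality, i.e. $\id_\ell\mapsto\id_{\alpha\ell}$, is immediate in both cases (in the uncollapsed case $\{\ell\}\mapsto\{\alpha\ell\}$). Thus all compositors are identities.

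Finally, functoriality follows from the same two mechanisms. Threshold-preservation is transitive along composable morphisms $[n]\xrightarrow{\alpha}[m]\xrightarrow{\beta}[m']$ (since $w\circ\beta\circ\alpha=u$ for the structure map $w$ of $[m']$), so the collapsed/uncollapsed dichotomy is preserved under composition; on uncollapsed hom-categories $\on{P}\OO(\beta\alpha)$ restricts to the cosimplicial identity $\OO(\beta\alpha)=\OO(\beta)\circ\OO(\alpha)$, and on collapsed hom-categories both composites are the unique map into $\ast$. Preservation of identities is the corresponding statement for $\OO^\bullet$ together with uniqueness of maps to $\ast$. I expect the only genuine content to be the bookkeeping of the case distinction, and the single nontrivial point is the translation of $v\circ\alpha=u$ into threshold-preservation; once that is in hand, every verification reduces either to the cosimplicial functoriality of $\OO^\bullet$ established in \autoref{defn:OI} or to the terminality of $\ast$, with the extremal identifications $\on{P}^0\OO^n=\Delta^n$ and $\on{P}^{n+1}\OO^n=\OO^n$ serving as consistency checks.
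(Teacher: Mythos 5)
Your proof is correct and is essentially the paper's own argument with the omitted details filled in: the paper defines $\on{P}\OO^{\bullet}(f)$ as the unique map $\overline{f}$ through which $p_{j,m}\circ f$ factors, asserting that this composite is ``compatible with the collapses,'' which is exactly your threshold-preservation observation that $v\circ\alpha=u$ is equivalent to $\alpha(\ell)\geq j \Leftrightarrow \ell\geq i$. The remaining verifications, which the paper omits, you carry out by the same two mechanisms the construction is built on --- cosimplicial functoriality of $\OO^{\bullet}$ on the uncollapsed hom-categories and terminality of $\ast$ on the collapsed ones --- so there is nothing to add.
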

\begin{proof}
	Let $i,j  \in \Delta_{/[1]}$ and consider a morphism $\func{i \to[f] j}$. Let $[n]$ (resp. $[m]$) denote the source of the morphism $i$ (resp. $j$). Observe that we have the following commutative diagram
	\[
	\begin{tikzcd}[ampersand replacement=\&]
	\OO^{n} \arrow[r,"f"] \arrow[d,"p_{i,n}"] \& \OO^{m} \arrow[d,"p_{j,m}"] \\
	\on{P}^{i}\OO^n \arrow[r,dotted,"\overline{f}"] \& \on{P}^{j}\OO^{m}
	\end{tikzcd}
	\]
	where the upper row map is the usual action on morphims of the cosimplicial object $\OO^{\bullet}$. We note that the dotted arrow exists and it is unique due to fact $p_{j,m} \circ f$ is compatible with the collapses defining $\on{P}^{i}\OO^n$. Therefore we just set $\on{P}\OO^{\bullet}(f)=\overline{f}$. We omit the rest of the details.
\end{proof}

\begin{lemma}\label{lem:collapsefunctoriality}
	Let $\OO^{\bullet}$ denote the cosimplicial object of (REF) and define
	\[
	\begin{tikzcd}[ampersand replacement=\&]
	\OO^{\bullet}_{/[1]}: \Delta_{/[1]} \arrow[r] \& \Delta \arrow[r,"\OO^{\bullet}"] \& \Lcat.
	\end{tikzcd}
	\]
	Then, the various maps $p_{i,n}$ (resp. $\ell_{i,n}$) assemble into natural transformations 
	\[
	\func{\OO^{\bullet}_{/[1]} \nat[p] \on{P}\OO^{\bullet} \nat[l] \OO^{\bullet}_{/[1]}}
	\]
\end{lemma}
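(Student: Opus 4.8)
The plan is to check the two naturality conditions separately. For each object $(i\colon[n]\to[1])$ of $\Delta_{/[1]}$ the prescribed components are the strict projection $p_{i,n}\colon\OO^n\to\on{P}^i\OO^n$ and the normal lax section $\ell_{i,n}\colon\on{P}^i\OO^n\to\OO^n$ of \autoref{cons:partialcollapse}; the first is a strict $2$-functor and the second is a normal lax functor, so both are legitimate morphisms in $\Lcat$ and the only thing to verify is that they commute with the image under each functor of a morphism $f\colon(i\colon[n]\to[1])\to(j\colon[m]\to[1])$ of $\Delta_{/[1]}$. I would dispatch $p$ first, since it is essentially already proved, and then concentrate the work on $l$.

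For the transformation $p$, the required naturality square at $f$ is exactly
\[
\begin{tikzcd}[ampersand replacement=\&]
\OO^{n} \arrow[r,"f"] \arrow[d,"p_{i,n}"'] \& \OO^{m} \arrow[d,"p_{j,m}"] \\
\on{P}^{i}\OO^n \arrow[r,"\overline{f}"'] \& \on{P}^{j}\OO^{m}
\end{tikzcd}
\]
and its commutativity is precisely the defining property of $\overline{f}=\on{P}\OO^\bullet(f)$ exhibited in the proof of \autoref{prop:collapsedelta1}. Hence $p$ is natural with no additional argument, and all of the substantive content lies in the section maps $\ell_{i,n}$.

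For the transformation $l$, naturality at $f$ amounts to the equality $f\circ\ell_{i,n}=\ell_{j,m}\circ\overline{f}$ of normal lax functors $\on{P}^i\OO^n\to\OO^m$. The key simplification I would exploit is that the codomain $\OO^m$ has posetal hom-categories: as recorded in \autoref{cons:partialcollapse} (the observation that poset-valued hom-categories force the compositors), a normal lax functor with target $\OO^m$ is determined by its effect on objects and $1$-morphisms, and two such functors agreeing there agree as lax functors. It therefore suffices to compare the two composites on objects and $1$-morphisms. On objects both send $k\mapsto f(k)$. On $1$-morphisms I would split according to the two regimes governed by the compatibility constraint $j\circ f=i$ (equivalently, $f$ carries $\{0,\dots,i-1\}$ into $\{0,\dots,j-1\}$ and $\{i,\dots,n\}$ into $\{j,\dots,m\}$): on a hom-category $\on{P}^i\OO^n(\ell,k)$ with $k<i$ the whole hom equals $\OO^n(\ell,k)$, both $\ell_{i,n}$ and $\ell_{j,m}$ act as the identity there, $f$ and $\overline{f}$ both act by $S\mapsto f(S)$ (with $f(S)$ again in the free region since $f(k)<j$), so the two sides agree; on a hom-category with $k\ge i$, which is the terminal category $\ast$, the unique morphism $\ast\colon\ell\to k$ is sent by $\ell_{i,n}$ to $\{\ell,k\}$ and then by $f$ to $\{f(\ell),f(k)\}$, whereas $\overline{f}$ sends $\ast$ to the unique morphism of $\on{P}^j\OO^m(f(\ell),f(k))$ (which is again $\ast$, since $f(k)\ge j$) and $\ell_{j,m}$ then returns $\{f(\ell),f(k)\}$, so the two sides again agree.

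The main apparent obstacle is the compositor bookkeeping for the genuinely lax functors $\ell_{i,n}$, which at first glance must be tracked across the naturality square. I expect this difficulty to dissolve entirely once the posetal-codomain observation is invoked, reducing the claim to the elementary object-and-$1$-morphism comparison above; the remaining verification is then routine case analysis using $j\circ f=i$, and the unitality and composability statements follow from the same two regimes. I would omit these final bookkeeping details as in \autoref{prop:collapsedelta1}.
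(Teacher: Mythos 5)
Your proposal is correct; note, however, that the paper itself gives no argument here --- its proof of this lemma reads ``Left as an exercise to the reader'' --- so there is no authorial route to compare against. Your solution is complete and is surely the intended one: the $p$ half is indeed immediate from the defining property of $\overline{f}$ in \autoref{prop:collapsedelta1}, and for the $l$ half the decisive observation is exactly the one you isolate, namely that since the hom-categories of $\OO^m$ are posets, a normal lax functor into $\OO^m$ is determined by its values on objects and $1$-morphisms (both the action on $2$-morphisms and the compositors, being morphisms and natural transformations valued in posets, are unique if they exist). This is the same device the paper already uses to define $\xi_n$ in the construction following \autoref{defn:OI}, and it correctly disposes of the only genuinely delicate point, the compositor bookkeeping for the lax sections $\ell_{i,n}$. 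Your two-regime case analysis on $1$-morphisms, governed by the constraint $j\circ f=i$ (so that $f(k)<j$ exactly when $k<i$), is accurate in both branches.
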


\begin{proof}
	Left as an exercise to the reader.
\end{proof}

\begin{definition}
	Let $\func{i:[n] \to[] [1]}$ be a monotone map and consider the 2-category $\on{P}^{i}\OO^{n}$. Given $I \subset [n]$ we define a 2-category $(\on{P}^{i} \OO^n )^{I}$ as follows
	\begin{itemize}
		\item The objects of $(\on{P}^i\OO^n)^{I}$ are the objects of $\OO^I$ 
		\item The hom-categories of $(\on{P}^i\OO^n)^{I}$ are given by 
		\[
		(\on{P}^i\OO^n )^{I}(\ell,j)=\begin{cases}
		\emptyset & \ell> j\\
		\OO^I(\ell,j) & \ell \leq j < i\\
		\ast & \ell \leq i\leq j 
		\end{cases}
		\]
	\end{itemize}
\end{definition}

\begin{remark}
	We observe that as a consequence of \ref{lem:collapsefunctoriality}  we have the following commutative diagram
	\[
	\begin{tikzcd}
	\OO^I\arrow[r]\arrow[d] & (\on{P}^i\OO^n)^I\arrow[d]\arrow[r] & \OO^I\arrow[d] \\
	\OO^n\arrow[r] & \on{P}^i\OO^n\arrow[r] & \OO^n.
	\end{tikzcd}
	\]
\end{remark}

\begin{construction}\label{const:homotopyofsections}
	Let $ \CC$ be a 2-category. We define a map of simplicial sets 
	\[
	H:\Nerv_2([1]\times  \CC)\to \widetilde{\cchi}_\ast(\NN( \CC))
	\]
	as follows. For a simplex in $\NN([1]\times  \CC)$ presented by a strict functor $\OO^n\to [1]\times  \CC$, we consider the component maps $i:\OO^n\to [1]$ and $\sigma:\OO^n\to  \CC$. We then send the simplex $(i,\sigma)$ to the data $\{\func{\tau^I:\OO^I_{\min(I)\downslash}\to  \CC}\}_{I\subset [n]}$ defined to be (the 2-nerve of) any of the composites in the diagram
	\[
	\begin{tikzcd}
	\OO^I_{\min(I)\downslash}\arrow[r]\arrow[d ] & (\on{P}^i\OO^n)^I_{\min(I)\downslash}\arrow[d]\arrow[r] & \OO^I_{\min(I)\downslash}\arrow[d] \\
	\OO^I\arrow[r ]\arrow[d] & (\on{P}^i\OO^n)^I\arrow[d]\arrow[r ] & \OO^I\arrow[d ] \\
	\OO^n\arrow[r] & \on{P}^i\OO^n\arrow[r] & \OO^n\arrow[r ,"\sigma"] &  \CC
	\end{tikzcd}
	\]
	going from $\OO^I_{\min(I)\downslash}$ in the upper left to $\CC$. 
\end{construction}

\begin{proposition}\label{prop:homotopyofsections}
	The map $H$ defined in Construction \ref{const:homotopyofsections} is a well-defined map of simplicial sets. 
\end{proposition}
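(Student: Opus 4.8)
The plan is to unwind what it means for $H$ to be a well-defined simplicial map and to verify the two requirements in turn: first, that for each $n$-simplex $(i,\sigma)$ of $\Nerv_2([1]\times\CC)$ the family $\{\tau^I\}_{I\subset[n]}$ genuinely assembles into an $n$-simplex of $\widetilde{\cchi}_\ast(\NN(\CC))$ in the sense of \autoref{def:chitilde}; and second, that this assignment is natural in $[n]$, i.e.\ commutes with faces and degeneracies. Since the base $2$-category of $\widetilde{\cchi}_\ast$ is terminal and the functor $\NN(\CC)$ is constant, the mapping-space factor $\Nerv(\CC(\sigma(\min I),\sigma(\min J))^{\op})$ is a point and $F(-)$ is the second projection, so the compatibility condition of \autoref{def:chitilde} collapses to the single family of equalities $\NN(\tau^I)\circ\widetilde{\rho}_{J,I}=\NN(\tau^J)\circ\on{pr}_2$ for $J\subset I\subset[n]$. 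Because $\NN$ is fully faithful, each such equality may be checked at the level of lax functors $\OO^I(\min I,\min J)^{\op}\times\OO^J_{j\downslash}\rightrightarrows\CC$.

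First I would establish that each individual $\tau^I$ is well defined, i.e.\ that the $3\times 3$ grid of Construction~\ref{const:homotopyofsections} commutes, so that "any of the composites" from $\OO^I_{\min(I)\downslash}$ to $\CC$ agree. The bottom two rows commute by the Remark following the definition of $(\on{P}^i\OO^n)^I$, which is itself a consequence of \autoref{lem:collapsefunctoriality}; the top row is obtained from the middle one by applying the lax-slice construction, which is functorial (the proposition exhibiting $(\CC,c)\mapsto\CC_{c\downslash}$ as a functor $\Lcat_\ast\to\Lcat$), so the forgetful squares between rows one and two commute as well. Post-composing with $\sigma$ then produces a single lax functor $\tau^I\colon\OO^I_{\min(I)\downslash}\to\CC$ independent of the chosen path, its nontrivial compositors coming from the lax inclusion $\ell_{i,n}$.

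The heart of the argument — and the step I expect to be the main obstacle — is the cocycle condition relating $\tau^I$ and $\tau^J$ for $J\subset I$. Here I would show that the gluing map $\widetilde{\rho}_{J,I}$ of \autoref{rem:lift} intertwines the partial-collapse data for $I$ with that for $J$: concretely, that the restriction functors $(\on{P}^i\OO^n)^I\to\OO^I$ and $(\on{P}^i\OO^n)^J\to\OO^J$, together with their slice versions, fit into a prism over $\widetilde{\rho}_{J,I}$, so that $\widetilde{\rho}_{J,I}$ carries the path defining $\tau^I$ to the path defining $\tau^J$ on the $\OO^J_{j\downslash}$-coordinate after projecting away the $\OO^I(\min I,\min J)^{\op}$-factor. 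Granting this, composing with $\sigma$ yields $\tau^I\circ\widetilde{\rho}_{J,I}=\tau^J\circ\on{pr}_2$ and hence the required commuting square. The delicate point is that $\widetilde{\rho}_{J,I}(L,S)=L\cup S$ mixes the two coordinates, so one must verify the identity on objects, $1$-morphisms, and — crucially — $2$-morphisms, by a case analysis on the position of the collapse threshold $i$ relative to $\min I$ and $\min J$. The $2$-morphism check is where the compositors of the lax functor $\ell_{i,n}$ enter, and constitutes the genuinely $2$-categorical content of the statement.

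Finally, naturality in $\Delta$ is a formal consequence of \autoref{prop:collapsedelta1} and \autoref{lem:collapsefunctoriality}: a morphism $[m]\to[n]$ acts on $(i,\sigma)$ by precomposition, and since $\on{P}\OO^\bullet$ is a functor on $\Delta_{/[1]}$ with $p$ and $\ell$ natural, the associated families $\{\tau^I\}$ transform compatibly with restriction along $I$; this yields compatibility of $H$ with faces and degeneracies. Markings require no separate argument, since the sources $\NN(\OO^I_{i\downslash})^{\flat}$ carry only their degenerate edges as marked, which are automatically preserved by any simplicial map. Assembling these four points shows that $H$ is a well-defined map of simplicial sets.
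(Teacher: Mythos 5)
Your overall skeleton matches the paper's proof: reduce well-definedness to the cocycle condition for $\emptyset\neq J\subset I\subset[n]$ (which, over the terminal $2$-category, collapses to $\tau^I\circ\widetilde{\rho}_{J,I}=\tau^J\circ\on{pr}_2$), verify it separately on objects, $1$-morphisms and $2$-morphisms, and obtain simplicial functoriality from \autoref{lem:collapsefunctoriality}. Your reduction of the compatibility square to a single equality of lax functors, and your treatment of objects and $1$-morphisms, are in line with what the paper does (the paper records the explicit formulas: a pair $(S,U)$ goes to $\sigma(\max(S\cup U))$ in both composites, and a $1$-morphism $(T\subset S,\,W)$ goes to $\sigma(\ell_{i,n}(p_{i,n}(W)))$ in both).

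However, there is a gap at exactly the step you yourself flag as the main obstacle. You announce that the $2$-morphism verification requires chasing the compositors of $\ell_{i,n}$ through a case analysis on the position of the collapse threshold $i$ relative to $\min I$ and $\min J$, and you call this ``the genuinely $2$-categorical content of the statement'' --- but you never carry it out, so as written the proof is incomplete precisely where an argument is needed. The paper avoids this computation entirely with one observation: both composites factor through $\on{P}^i\OO^n$, and by \autoref{cons:partialcollapse} every hom-category of $\on{P}^i\OO^n$ is either a poset (a hom-category of $\OO^n$) or a point. In a $2$-category whose hom-categories are posets, a $2$-functor's (or lax functor's) values on $2$-morphisms are completely determined by its values on $1$-morphisms, so agreement on $1$-morphisms --- which you have --- already forces agreement on $2$-morphisms, and no compositor bookkeeping is required. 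You should either supply that observation or actually perform the case analysis you defer; with the poset remark added, your argument closes and coincides with the paper's.
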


\begin{proof}
	We first check that the data given do, indeed, form a simplex. in $\widetilde{\cchi}_\ast (\Nerv_2(\CC))$. This amounts to taking $\emptyset\neq J\subset I\subset [n]$ and showing that the diagram
	\[
	\begin{tikzcd}
	\OO^I(i,j)^\op\times \OO^J_{\min(J)\downslash}\arrow[d,"\tau^J"']\arrow[r] & \OO^I_{\min(I)\downslash}\arrow[d,"\tau^I"]\\
	\ast \times \CC\arrow[r,"\id"'] & \CC
	\end{tikzcd}
	\]
	commutes. We can check this on 0-, 1-, and 2-morphisms. On 0-morphisms, it is easy to see that both composites map a pair $(S,U)$ to $\sigma(\max(U)):=\sigma(\max(S\cup U))$. Likewise, on 1-morphisms, a pair $(T\subset S, W\cup U\supset V)$ is sent in both cases to $\sigma(\ell_{i,n}(p_{i,n}(W)))$. Finally, on 2-morphisms, we note that both composites will factor through $\on{P}^i\OO^n$. However, the hom-categories in $\on{P}^i\OO^n$ are either posets or points. Therefore, the values on 1-morphisms completely determine those on 2-morphisms. 
	
	The functoriality in $[n]$ follows immediately from \autoref{lem:collapsefunctoriality}. 
\end{proof}

\begin{remark}
	If $\sigma:\OO^1\to  \CC$ is constant on an object and $i:\OO^1\to [1]$  is the unique non-degenerate simplex, all of the data of the corresponding 1-simplex of $\widetilde{\cchi}_\ast(N_2( \CC))$ will factor through a constant map $\ast\to  \CC$. That is $H(i,\sigma)$ will be degenerate. Consequently, $H$ defines a natural equivalence. 
\end{remark}

Note that $H$ is thus a map of simplicial sets $\Delta^1\times \Nerv_2( \CC)\to \widetilde{\cchi}_\ast(\Nerv_2( \CC))$. On $0\times \Nerv_2( \CC)$, it sends a simplex $\sigma:\OO^n\to  \CC$ to the composite data 
\[
\{\tau^I: \OO^I_{\min(I)\downslash}\to \OO^n\overset{\sigma}{\to} \CC\}
\]
i.e. $H(0,-)$ is the composite 
\[
\begin{tikzcd}[ampersand replacement=\&]
\NN(\CC) \arrow[r,"s_{\CC}"] \& \widetilde{\cchi}_{\CC}(\mathfrak{C}_{\CC \downslash}) \arrow[r] \& \NN(C) \times \widetilde{\cchi}_{\ast}(\NN(\CC)) \arrow[r] \& \widetilde{\cchi}_{\ast}(\NN(\CC))
\end{tikzcd}
\]
Moreover, $H(1,-)$ sends a simplex $\sigma:\OO^n\to  \CC$ to the composite data 
\[
\{\func{\NN\left(\OO^I_{\min(I)\downslash}\right)\to \NN(\OO^n)\to\Delta^n \to \NN( \CC) } \}_{I \subset [n]}
\]
i.e. $H(1,-)$ is the composite 
\[
\begin{tikzcd}
N_2( \CC)\arrow[r,equals] &\chi_\ast(N_2( \CC)) \arrow[r] &\widetilde{\cchi}_\ast(N_2( \CC))
\end{tikzcd}
\]
and thus fits into a commutative square 
\[
\begin{tikzcd}
N_2( \CC)\arrow[r]\arrow[d,"\simeq"] & \widetilde{\cchi}_\ast(N_2( \CC))\arrow[d]\\
L_W ( \CC^{\dagger})\arrow[r,"\simeq"] & \widetilde{\cchi}_\ast(L_W ( \CC^{\dagger})) 		
\end{tikzcd}
\]
what we have thus shown is:

\begin{corollary}\label{cor:sectiongivesID}
	Let $ \CC$ be a 2-category. Then the diagram 
	\[
	\begin{tikzcd}
	& \widetilde{\cchi}_\CC(\mathfrak{C}_{\CC\downslash})\arrow[d]\arrow[dd,out=-30, in=30, "\rho_{\CC}"]\\
	\NN( \CC)\arrow[ur,"s_{\CC}"]\arrow[r]\arrow[d,"\simeq"] & \widetilde{\cchi}_ \CC(\NN( \CC))\arrow[d]\\
	L_W( \CC^{\dagger})\arrow[r,"\simeq"'] & \widetilde{\cchi}_\ast(L_W ( \CC^{\dagger})) 		
	\end{tikzcd}
	\]
	commutes up to natural equivalence.
\end{corollary}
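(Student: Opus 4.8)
The plan is to read off the statement by decomposing the outer diagram into its upper triangle --- with vertices $\NN(\CC)$, $\widetilde{\cchi}_\CC(\mathfrak{C}_{\CC\downslash})$ and $\widetilde{\cchi}_\CC(\NN(\CC))$ --- and its lower square, and to treat the two halves independently. The lower square requires no new argument: it is exactly the commutative square produced just above from \autoref{cor:fibres} in the case $\CC=\ast$, comparing the localization map $\NN(\CC)\to L_W(\CC^\dagger)$ with the canonical map into the $\widetilde{\cchi}_\ast$-nerve, and it commutes once one invokes the identification $\widetilde{\cchi}_\CC(\underline{X})\isom\CC\times\widetilde{\cchi}_\ast(X)$ for constant functors. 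Thus the only substantive content is that the upper triangle commutes up to natural equivalence, i.e. that $s_\CC$ followed by the vertical projection agrees with the canonical inclusion $\NN(\CC)\to\widetilde{\cchi}_\CC(\NN(\CC))$.

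For the triangle I would use the homotopy $H\colon \Delta^1\times\NN(\CC)\to\widetilde{\cchi}_\ast(\NN(\CC))$ of \autoref{const:homotopyofsections}, whose well-definedness is \autoref{prop:homotopyofsections}, and identify its two endpoint restrictions. At $i=0$ the partial collapse degenerates to $\on{P}^0\OO^n=\Delta^n$, so unwinding the diagram of \autoref{const:homotopyofsections} shows that $H(0,-)$ is precisely the composite of $s_\CC$ with the projection $\widetilde{\cchi}_\CC(\mathfrak{C}_{\CC\downslash})\to\NN(\CC)\times\widetilde{\cchi}_\ast(\NN(\CC))$ onto the second factor. At $i=n+1$ we have $\on{P}^{n+1}\OO^n=\OO^n$, and the same unwinding identifies $H(1,-)$ with the standard inclusion $\NN(\CC)=\chi_\ast(\NN(\CC))\to\widetilde{\cchi}_\ast(\NN(\CC))$ --- which is exactly the horizontal leg of the triangle. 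Hence $H$ is a natural transformation filling the triangle, and to see it is a natural \emph{equivalence} I would invoke the observation that for a constant simplex the data of $H$ factor through a constant map $\ast\to\CC$, so every edge of $H$ in the $\Delta^1$-direction is degenerate, and hence an equivalence.

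Pasting the two halves then yields the claim: the natural equivalence filling the upper triangle, composed with the strict commutativity of the lower square, exhibits commutativity of the whole diagram, in particular along the long arrow $\rho_\CC$ from $\widetilde{\cchi}_\CC(\mathfrak{C}_{\CC\downslash})$ down to $\widetilde{\cchi}_\ast(L_W(\CC^\dagger))$. I expect the only real obstacle to be bookkeeping --- matching the endpoint restrictions of $H$ with the two legs of the triangle under the constant-functor identification $\widetilde{\cchi}_\CC(\underline{X})\isom\CC\times\widetilde{\cchi}_\ast(X)$, and keeping track of the fact that all maps in sight live over $\Nsc(\CC)$, so that $H$ only homotopes the $\widetilde{\cchi}_\ast$-component while fixing the base.
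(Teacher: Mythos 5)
Your proposal follows the paper's own argument essentially verbatim: the lower square is the one already established via \autoref{cor:fibres} at $\CC=\ast$ together with the constant-functor identification, and the upper triangle is filled by the homotopy $H$ of \autoref{const:homotopyofsections}, whose two endpoint restrictions are identified with $s_\CC$ followed by the projection and with the canonical inclusion, and which is a natural equivalence because its edges in the $\Delta^1$-direction are degenerate. The only slip is in the endpoint bookkeeping: the restriction of $H$ to $0\times\NN(\CC)$ corresponds to the constant simplex at $0\in\Delta^1$, hence to the collapse parameter $i=n+1$ and the \emph{uncollapsed} $\on{P}^{n+1}\OO^n=\OO^n$ (this is the $s_\CC$ leg, since it uses the full data $\OO^I_{\min(I)\downslash}\to\OO^n\to\CC$), while the restriction to $1\times\NN(\CC)$ corresponds to $i=0$ and the full collapse $\on{P}^0\OO^n=\Delta^n$ (the inclusion leg) --- you have these two pairings reversed, though this does not affect the conclusion.
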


\section{The main theorem}

The goal of this section will be to prove the main theorem of this paper: 

\begin{theorem}[Theorem A\textsuperscript{$\dagger$}]\label{thm:thebigone}
	Let $F:\CC^\dagger \to \DD^\dagger$ be a functor of marked 2-categories. Suppose that,
	\begin{enumerate}
		\item\label{asump:init} For every object $d\in \DD$, there exists a morphism $\func{g_d: d\to F(c)}$ which is initial in both $L_{\mathcal{W}}(\CC_{d\downslash}^\dagger)$ and $L_{\mathcal{W}}(\DD^\dagger_{d\downslash})$.
		\item\label{assump:markedinit} Every marked morphism \begin{tikzcd}
			d \arrow[r,circled] & F(c)
		\end{tikzcd}
		is initial in $L_{\mathcal{W}}(\CC_{d\downslash}^\dagger)$. 
		\item\label{asump:presundermarked}  For any marked morphism \begin{tikzcd}f:b\arrow[r,circled] & d\end{tikzcd} in $\DD$, the induced functors $\func{f^\ast: L_{\mathcal{W}}(\CC^\dagger_{d\downslash}) \to L_{\mathcal{W}}(\CC^\dagger_{b\downslash})}$ preserve initial objects.
	\end{enumerate}
	Then the induced functor $\func{F_{\mathcal{W}}: L_\mathcal{W}(\Nerv_2(\CC^\dagger))\to L_\mathcal{W}(\Nerv_2(\DD^\dagger))}$ is an equivalence of $\infty$-categories. 
\end{theorem}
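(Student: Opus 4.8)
The plan is to construct an explicit weak inverse to $F_{\mathcal{W}}$ by transporting the section machinery of \autoref{sec:sec} to $\DD$ and feeding it the marked slices of $F$. Consider the $\msSet$-enriched functor
\[
\func*{\mathfrak{C}_{F\downslash}: \DD^{(\op,\op)}\to\msSet; d\mapsto \NN\!\left(\CC^\dagger_{d\downslash}\right)}
\]
built from the marked slices $\CC^\dagger_{d\downslash}$ of \autoref{def:markedslice}, localize it to $\hat{\mathfrak{C}}_{F\downslash}=L_{\mathcal{W}}\circ\mathfrak{C}_{F\downslash}$, and set $\func{\hat q: \widetilde{\cchi}_{\DD}(\hat{\mathfrak{C}}_{F\downslash})\to\Nsc(\DD)}$; exactly as in \autoref{cor:inital} this is a scaled (hence locally) Cartesian fibration with fiber $L_{\mathcal{W}}(\CC^\dagger_{d\downslash})$ over $d$. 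The forgetful $2$-functors $\func{\CC^\dagger_{d\downslash}\to\CC^\dagger}$ assemble into a natural transformation $\func{\hat{\mathfrak{C}}_{F\downslash}\to\underline{L_{\mathcal{W}}(\CC^\dagger)}}$ which, upon applying $\widetilde{\cchi}_{\DD}$ and using $\widetilde{\cchi}_{\DD}(\underline X)\isom\DD\times\widetilde{\cchi}_\ast(X)$, gives a projection $\func{\rho_F: \widetilde{\cchi}_{\DD}(\hat{\mathfrak{C}}_{F\downslash})\to\widetilde{\cchi}_\ast(L_{\mathcal{W}}(\CC^\dagger))}$.

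By hypothesis \ref{asump:init} every fiber $L_{\mathcal{W}}(\CC^\dagger_{d\downslash})$ has an initial object, so \autoref{prop:locCartTrivKanFib} applies: the full subcategory $E'$ of fiberwise-initial objects maps down to $\Nsc(\DD)$ by a trivial Kan fibration, yielding a section $\func{t:\Nsc(\DD)\to\widetilde{\cchi}_{\DD}(\hat{\mathfrak{C}}_{F\downslash})}$ through $E'$; and any two sections factoring through $E'$ agree, being initial among sections. I would take $\rho_F\circ t$, viewed via $\widetilde{\cchi}_\ast(L_{\mathcal{W}}(\CC^\dagger))\simeq L_{\mathcal{W}}(\CC^\dagger)$, as the candidate inverse. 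This descends to the localization precisely because of hypothesis \ref{asump:presundermarked}: for a marked $\func{f:b\to d}$ the transition functor $f^\ast$ preserves initial objects, so the Cartesian part of the edge $t(f)$ joins two initial objects of $L_{\mathcal{W}}(\CC^\dagger_{b\downslash})$ and is an equivalence; hence $\rho_F\circ t$ sends marked edges of $\DD$ to equivalences, inverts $W_{\DD}$, and factors through the localization map $\func{\gamma_{\DD}:\Nerv_2(\DD^\dagger)\to L_{\mathcal{W}}(\DD^\dagger)}$ to define $\func{G:L_{\mathcal{W}}(\DD^\dagger)\to L_{\mathcal{W}}(\CC^\dagger)}$.

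Both triangle identities now reduce, via the uniqueness clause of \autoref{prop:locCartTrivKanFib}, to \autoref{cor:sectiongivesID}. For $F_{\mathcal{W}}\circ G\simeq\id$, the slice functoriality $\func{F_\downslash:\CC^\dagger_{d\downslash}\to\DD^\dagger_{d\downslash}}$ induces a map $\Phi$ over $\Nsc(\DD)$ from $\hat q$ to the fibration $\func{\hat p_{\DD}:\widetilde{\cchi}_{\DD}(\hat{\mathfrak{D}}_{\DD\downslash})\to\Nsc(\DD)}$ obtained by applying \autoref{sec:sec} to $\DD$, compatible with the projections so that $F_{\mathcal{W}}\circ\rho_F\simeq\rho_{\DD}\circ\Phi$. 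By hypothesis \ref{asump:init} the $\Phi$-images of initial objects are initial, so $\Phi\circ t$ is an initial section of $\hat p_{\DD}$; and since $\id_d$ is initial in $\Nerv_2(\DD_{d\downslash})$ and the localization map is coinitial, the canonical section $\hat s_{\DD}$ of \autoref{cor:inital} is also initial, whence $\Phi\circ t\simeq\hat s_{\DD}$. Applying $\rho_{\DD}$ and \autoref{cor:sectiongivesID} over $\DD$ gives $F_{\mathcal{W}}\circ G\circ\gamma_{\DD}\simeq\rho_{\DD}\circ\hat s_{\DD}\simeq\gamma_{\DD}$, so $F_{\mathcal{W}}\circ G\simeq\id$ by the universal property of $\gamma_{\DD}$. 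For $G\circ F_{\mathcal{W}}\simeq\id$, pulling $\hat q$ back along $\Nerv_2(F)$ yields, by the base-change compatibility of $\widetilde{\cchi}$ (the remark after \autoref{def:chitilde}), the fibration $\widetilde{\cchi}_{\CC}(F^\ast\hat{\mathfrak{C}}_{F\downslash})$; the comparison $\func{\iota:\mathfrak{C}_{\CC\downslash}\to F^\ast\mathfrak{C}_{F\downslash}}$ sending $\func{u:c\to c'}$ to $\func{F(u):F(c)\to F(c')}$ carries $\hat s_{\CC}$ to the section $c\mapsto(c,\id_{F(c)})$, which lands in initial objects by hypothesis \ref{assump:markedinit}. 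Thus $t\circ\Nerv_2(F)$ and $\widetilde{\cchi}_{\CC}(\iota)\circ\hat s_{\CC}$ are both initial sections of the pullback and hence equivalent; since $\iota$ is compatible with the projections to $L_{\mathcal{W}}(\CC^\dagger)$, we get $G\circ F_{\mathcal{W}}\circ\gamma_{\CC}\simeq\rho_{\CC}\circ\hat s_{\CC}\simeq\gamma_{\CC}$ by \autoref{cor:sectiongivesID}, and so $G\circ F_{\mathcal{W}}\simeq\id$.

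The main obstacle is not conceptual --- everything rests on the uniqueness of initial objects in \autoref{prop:locCartTrivKanFib} together with the identification of projected canonical sections in \autoref{cor:sectiongivesID} --- but rather the coherent bookkeeping of the fibrations, projections and base changes. One must verify that $\Phi$, $\rho_F$, $\iota$ and the base-change isomorphism $\widetilde{\cchi}_{\CC}(F^\ast\hat{\mathfrak{C}}_{F\downslash})\isom\Nerv_2(F)^\ast\widetilde{\cchi}_{\DD}(\hat{\mathfrak{C}}_{F\downslash})$ all commute with the forgetful maps down to $L_{\mathcal{W}}(\CC^\dagger)$, and that the sections $\Phi\circ t$ and $t\circ\Nerv_2(F)$ genuinely factor through the relevant subcategories of fiberwise-initial objects --- the precise points where hypotheses \ref{asump:init}, \ref{assump:markedinit} and \ref{asump:presundermarked} are consumed. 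Carrying these out rigorously requires invoking the scaled-Cartesian machinery of \autoref{sec:QA}, and in particular \autoref{cor:fibres} to pass between the fibers of $\cchi$ and of $\widetilde{\cchi}$, with some care.
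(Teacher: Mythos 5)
Your proposal follows essentially the same route as the paper's proof: you build the scaled Cartesian fibration $\widetilde{\cchi}_{\DD}$ of the localized slice functor (your $\mathfrak{C}_{F\downslash}$ is the paper's $\mathfrak{C}_{\DD\downslash}$), extract an initial section via \autoref{prop:locCartTrivKanFib} using hypothesis \ref{asump:init}, descend it to the localization using hypothesis \ref{asump:presundermarked}, and verify both composites by comparing initial sections (via \autoref{lem:IDis2cofinal} and hypothesis \ref{assump:markedinit} respectively) and then invoking \autoref{cor:sectiongivesID}. The constructions, the role of each hypothesis, and the reduction of the two triangle identities to uniqueness of initial sections all match the paper's argument.
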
 

\begin{remark}
	Assumptions \ref{assump:markedinit} and \ref{asump:presundermarked} may seem a bit asymmetrical compared with assumption \ref{asump:init}, in that they do not involve $L_{\mathcal{W}}(\DD^\dagger_{d\downslash})$ However, as the next lemma shows, marked morphisms are \emph{always} initial in $L_{\mathcal{W}}(\DD^\dagger_{d\downslash})$. Since the identity is always initial in $L_{\mathcal{W}}(\DD^\dagger_{d\downslash})$, this immediately shows that precomposition with marked morphisms preserves one initial object, and thus preserves all initial objects.
\end{remark}

\begin{lemma}\label{lem:IDis2cofinal}
	Let $\DD^\dagger$ be a marked 2-category. Then, for every object $d\in \DD$,  every marked morphism \begin{tikzcd}
		d \arrow[r,circled] & b
	\end{tikzcd} 
	is an initial object in $L_{\mathcal{W}}(\Nerv_2(\DD_{d\downslash}))$.
\end{lemma}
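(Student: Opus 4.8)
The plan is to reduce everything to the single object $\id_d\in\DD_{d\downslash}$ and then transport initiality first through the localization and afterwards along a marked edge. Concretely, I will (i) show that $\id_d$ is an initial object of the $\infty$-category $\Nerv_2(\DD_{d\downslash})$ \emph{before} localizing, (ii) deduce that $\gamma(\id_d)$ is initial in $L_{\mathcal{W}}(\Nerv_2(\DD_{d\downslash}))$ using that the localization map is coinitial, and (iii) show that an arbitrary marked morphism $h\colon d\to b$ becomes equivalent to $\id_d$ after localization, whence it too is initial.

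For step (i), fix an object $g\colon d\to c$ of $\DD_{d\downslash}$ and analyze the hom-category $\DD_{d\downslash}(\id_d,g)$. By \autoref{def:laxslice}, a $1$-morphism $\id_d\to g$ is a pair $(u\colon d\to c,\ \beta\colon g\nat u)$, and a $2$-morphism $(u,\beta)\to(v,\alpha)$ is a $2$-morphism $\nu\colon u\nat v$ satisfying $\alpha=\nu\cdot\beta$ (the whiskering by $\id_d$ being trivial). The distinguished pair $(g,\id_g)$ then admits a \emph{unique} morphism to every $(v,\alpha)$, namely $\nu=\alpha$, so $(g,\id_g)$ is initial in $\DD_{d\downslash}(\id_d,g)$ and hence $\Nerv(\DD_{d\downslash}(\id_d,g))$ is contractible. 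Since mapping spaces in the Duskin nerve are computed by $\on{Map}_{\Nerv_2(\DD_{d\downslash})}(\id_d,g)\simeq \Nerv(\DD_{d\downslash}(\id_d,g))$, all mapping spaces out of $\id_d$ are contractible, i.e. $\id_d$ is initial in $\Nerv_2(\DD_{d\downslash})$.

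Step (ii) is the conceptual heart of the argument. An object is initial in an $\infty$-category precisely when the inclusion of the corresponding point is coinitial (limit-preserving), so initiality of $\id_d$ says that $\{\id_d\}\hookrightarrow\Nerv_2(\DD_{d\downslash})$ is coinitial. The localization map $\gamma$ is coinitial by the proposition on $\gamma_X$ recalled above, and coinitial functors are closed under composition; hence the composite $\{\id_d\}\to L_{\mathcal{W}}(\Nerv_2(\DD_{d\downslash}))$ is coinitial, which is exactly the statement that $\gamma(\id_d)$ is initial.

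Finally, for step (iii), given a marked morphism $h\colon d\to b$, the pair $(h,\id_h)$ is a $1$-morphism $\id_d\to h$ in $\DD_{d\downslash}$ whose underlying $2$-morphism $\id_h$ is invertible and whose underlying $1$-morphism $h$ is marked; by \autoref{def:markedslice} it is therefore a marked edge of $\DD^\dagger_{d\downslash}$. Consequently $\gamma$ carries it to an equivalence, so $\gamma(h)\simeq\gamma(\id_d)$, and since being initial is invariant under equivalence, $\gamma(h)$ is initial as well. The main obstacle I anticipate is making step (ii) airtight: one must be careful that it is \emph{coinitiality}, not cofinality, that transports \emph{initial} objects, that the identification of initial objects with coinitial point-inclusions is applied in the correct variance, and that the Duskin-nerve mapping-space computation in step (i) is valid so that initiality can genuinely be detected on hom-categories.
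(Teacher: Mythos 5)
Your proposal is correct and follows essentially the same route as the paper's proof: you verify that the hom-category $\DD_{d\downslash}(\id_d,g)$ has an initial object so that $\id_d$ is initial in the (fibrant replacement of the) Duskin nerve, transport initiality along the localization map using its (co)initiality, and then observe that the marked edge $(h,\id_h)$ identifies any marked morphism with $\id_d$ in $L_{\mathcal{W}}(\Nerv_2(\DD_{d\downslash}))$. Your explicit attention to the coinitial-versus-cofinal variance in step (ii) is in fact slightly more careful than the paper's wording, which invokes cofinality where coinitiality is what is used.
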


\begin{proof}
	Consider the identity morphism \begin{tikzcd}
		d \arrow[r,equal] & d
	\end{tikzcd}  and another morphism $\func{f:d\to b}$. Note that the category $\DD_{d\downslash}\left(\id_d,f\right)$ has an initial object given by the diagram 
	\[
	\begin{tikzcd}
	d\arrow[dr,"f"]\arrow[d,equal] & \\
	d\arrow[r,"f"'] & b
	\end{tikzcd}
	\]
	where the 2-morphism filling the diagram is the identity on $f$. Consequently, $\id_d$ is an initial object in the Joyal fibrant replacement of $N_2(\DD_{d\downslash})$. Since localization is cofinal, the image of $\id_d$ is still an initial object in $L_{\mathcal{W}}(N_2(\DD_{d\downslash}))$. We then need only note that, given a marked morphism \begin{tikzcd}
		g:d \arrow[r,circled] & b
	\end{tikzcd} 
	the (strictly commuting) diagram 
	\[
	\begin{tikzcd}
	d\arrow[dr,circled, "g"]\arrow[d,equal] & \\
	d\arrow[r,circled,"g"'] & b
	\end{tikzcd}
	\]
	shows that $g$ is equivalent to $\id_d$ in $L_{\mathcal{W}}(N_2(\DD_{d\downslash}))$, and thus is itself initial. 
\end{proof}

\begin{notation}\glsadd{CCslash}
	Before beginning the proof, we fix some notation for functors which will appear throughout. We consider the functor 
	\[
	\func*{\mathfrak{C}_{\DD\downslash}: \DD\to \Set_\Delta^+; d \mapsto \Nerv_2(\CC_{d\downslash})} 
	\]
	the functor 
	\[
	\func*{\mathfrak{C}_{\CC\downslash}: \CC\to \Set_\Delta^+; c \mapsto \Nerv_2(\CC_{c\downslash})} 
	\]
	and the functor 
	\[
	\func*{\mathfrak{D}_{\DD\downslash}: \DD\to \Set_\Delta^+; d \mapsto \Nerv_2(\DD_{d\downslash}).} 
	\]
	We will denote by, e.g. $\widehat{\mathfrak{C}}_{\DD\downslash}$ the composite $L_{\mathcal{W}}\circ \mathfrak{C}_{\DD\downslash}$ with the fibrant replacement functor (which we also refer to as the localization) on $\Set_\Delta^+$. We will further denote a constant functor $\func{\DD\to \Set_\Delta^+}$ with value $X$ by $\underline{X}$. 
\end{notation}

\begin{construction}
	We note that since $\widehat{\mathfrak{C}}_{\DD\downslash}$ is projectively fibrant, $\func{\widetilde{\cchi}_{\DD}(\widehat{\mathfrak{C}}_{\DD\downslash})\to \Nerv_2(\DD)}$ is a scaled Cartesian fibation. In particular, it is locally Cartesian. By assumption \ref{asump:init}, every fiber of $\func{\widetilde{\cchi}_{\DD}(\widehat{\mathfrak{C}}_{\DD\downslash})\to \Nerv_2(\DD)}$ is non-empty, and has an initial element. We can therefore apply Proposition \ref{prop:locCartTrivKanFib} to find a section 
	\[
	\func{s_F:\Nerv_2(\DD)\to  \widetilde{\cchi}_{\DD}(\widehat{\mathfrak{C}}_{\DD\downslash})} 
	\]
	which is initial in $\on{Map}_{\Nerv_2(\DD)}(\Nerv_2(\DD),\widetilde{\cchi}_{\DD}(\widehat{\mathfrak{C}}_{\DD\downslash}))$. Note that given a marked morphism $g$ in $\DD$, assumption \ref{asump:presundermarked} shows that $s_F(g)$ will be given by a morphism between two initial objects in a fiber, and thus will be an equivalence in $\widetilde{\cchi}_{\DD}(\widehat{\mathfrak{C}}_{\DD\downslash})$. In particular $s_F$ will descend to a functor on $L_{\mathcal{W}}(\DD^\dagger)$. 
\end{construction}

\begin{proof}[Proof (of \autoref{thm:thebigone})]
	Let us briefly outline the proof. The natural transformation $\func{\widehat{\mathfrak{C}}_{\DD\downslash}\nat \underline{L_{\mathcal{W}}(\CC^{\dagger})}}$ induces a map 
	\[
	\begin{tikzcd}
	\gamma:\widetilde{\cchi}_\DD(\widehat{\mathfrak{C}}_{\DD\downslash})\arrow[r] & \widetilde{\cchi}_\DD (\underline{L_{\mathcal{W}}(\CC^{\dagger})})\arrow[r,equal] & \widetilde{\cchi}_\ast(L_{\mathcal{W}}(\CC^{\dagger}))\times \Nerv_2(\DD)\arrow[r] &  \widetilde{\cchi}_\ast(L_{\mathcal{W}}(\CC^{\dagger})).
	\end{tikzcd}
	\]
	We will define $\hat{G}:=\gamma\circ s_F$, and show that $\hat{G}$ defines an inverse to $F_{\mathcal{W}}$. What we mean by this is that, under the identifications induced by the commutative diagram 
	\[
	\begin{tikzcd}
	\Nerv_2(\CC) \arrow[r,"\simeq"]\arrow[d,"F"']&	L_{\mathcal{W}}(\CC^{\dagger})\arrow[d,"F_{\mathcal{W}}"']\arrow[r,"\simeq"] &	\widetilde{\cchi}_\ast(L_{\mathcal{W}}(\CC^{\dagger}))\arrow[d,"\widetilde{\cchi}_\ast(F_{\mathcal{W}})"] \\
	\Nerv_2(\DD) \arrow[r,"\simeq"] &	L_{\mathcal{W}}(\DD^{\dagger})\arrow[r,"\simeq"]&	\widetilde{\cchi}_\ast(L_{\mathcal{W}}(\DD^{\dagger})) 
	\end{tikzcd}
	\]
	The functor $\hat{G}$ provides a lift $\func{L_{\mathcal{W}}(\DD^{\dagger})\to \widetilde{\cchi}_\ast(L_{\mathcal{W}}(\CC^{\dagger}))}$ so that the resulting diagram
	\[
	\begin{tikzcd}
	\Nerv_2(\CC) \arrow[r,"\simeq"]\arrow[d,"F"']&	L_{\mathcal{W}}(\CC^{\dagger})\arrow[r,"\simeq"] &	\widetilde{\cchi}_\ast(L_{\mathcal{W}}(\CC^{\dagger}))\arrow[d,"\widetilde{\cchi}_\ast(F_{\mathcal{W}})"] \\
	\Nerv_2(\DD) \arrow[r,"\simeq"]\arrow[urr,"\hat{G}"]  &	L_{\mathcal{W}}(\DD^{\dagger})\arrow[r,"\simeq"]&	\widetilde{\cchi}_\ast(L_{\mathcal{W}}(\DD^{\dagger})) 
	\end{tikzcd}
	\]
	commutes up to natural equivalence. Since $\hat{G}$ descends to a functor $L_{\mathcal{W}}(\DD^\dagger)\to \tilde{\cchi}_\ast(L_{\mathcal{W}}(\CC^\dagger))$, this immediately implies that $F_{\mathcal{W}}$ is an equivalence. We then note that the bottom horizontal morphism and the top horizontal morphism are equivalent to $\rho_{\DD}\circ s_{\DD}$ and $\rho_{\CC}\circ s_{\CC}$ respectively, by \autoref{cor:sectiongivesID}. We have therefore reduced the problem to showing (1) that $\tilde{\cchi}_\ast(F_{\mathcal{W}})\circ \hat{G}\simeq \rho_{\DD}\circ s_{\DD}$ and (2) that $\hat{G}\circ F\simeq \rho_{\CC}\circ s_{\CC}$. 
	We now embark upon the proof of these facts. 
	\begin{enumerate}
		\item Note that there is a commutative diagram of natural transformations 
		\[
		\begin{tikzcd}
		\mathfrak{C}_{\DD\downslash}\arrow[r,nat]\arrow[d,nat] & \widehat{\mathfrak{C}}_{\DD\downslash}\arrow[d,nat] \arrow[r,nat]\arrow[d,nat]& \underline{L_{\mathcal{W}}(\CC^{\dagger})}\arrow[d,nat, "F_{\mathcal{W}}"]\\ 
		\mathfrak{D}_{\DD\downslash}\arrow[r,nat] & \widehat{\mathfrak{D}}_{\DD\downslash}\arrow[r,nat]& \underline{L_{\mathcal{W}}(\DD^{\dagger})}
		\end{tikzcd}
		\]
		Which induces a commutative diagram 
		\[
		\begin{tikzcd}
		\widetilde{\cchi}_{\DD}(\mathfrak{C}_{\DD\downslash})\arrow[r,]\arrow[d,] & \widetilde{\cchi}_{\DD}(\widehat{\mathfrak{C}}_{\DD\downslash}) \arrow[r,"\gamma"]\arrow[d,"c"]& \widetilde{\cchi}_{\ast}(L_{\mathcal{W}}(\CC^{\dagger}))\arrow[d,"\widetilde{\cchi}_\ast(F_{\mathcal{W}})"]\\ 
		\widetilde{\cchi}_{\DD}(\mathfrak{D}_{\DD\downslash})\arrow[r,"a"]\arrow[rr,bend right, "\rho_{\DD}"'] & \widetilde{\cchi}_{\DD}(\widehat{\mathfrak{D}}_{\DD\downslash})\arrow[r,"b"]&\widetilde{\cchi}_{\ast}(L_{\mathcal{W}}(\DD^{\dagger}))
		\end{tikzcd}
		\]
		
		Applying \autoref{prop:locCartTrivKanFib} and \autoref{lem:IDis2cofinal}, we immediately see that $a\circ s_{\DD}$ is initial in $\on{Map}_{\Nerv_2(\DD)}(\Nerv_2(\DD),\widetilde{\cchi}_{\DD}(\widehat{\mathfrak{C}}_{\DD\downslash}))$. Together, \autoref{prop:locCartTrivKanFib} and \hyperref[asump:init]{condition 1} show that $c\circ s_F$ is another such initial section. Consequently $a\circ s_{\DD}\simeq c\circ s_F$, so  
		\[
		\tilde{\cchi}_\ast(F_{\mathcal{W}})\circ \hat{G}=\tilde{\cchi}_\ast(F_{\mathcal{W}})\circ \gamma\circ s_F=b\circ c\circ s_F\simeq  b\circ a\circ s_{\DD}=\rho_{\DD}\circ s_{\DD}.
		\]
		\item Consider the pullback diagram 
		\[
		\begin{tikzcd}
		\widetilde{\cchi}(F^\ast\mathfrak{C}_{\DD\downslash})\arrow[r]\arrow[d] &  \widetilde{\cchi}(\mathfrak{C}_{\DD\downslash})\arrow[d]\\
		\Nerv_2(\CC)\arrow[r,"F"'] & \Nerv_2(\DD)
		\end{tikzcd}
		\]
		and denote by $\func{\alpha_{F}:N_2(\CC)\to \widetilde{\cchi}(F^\ast\mathfrak{C}_{\DD\downslash})}$ the pullback of the section $s_F$. 
		
		With the aid of the natural transformation $F^\ast\mathfrak{C}_{\DD\downslash}\Rightarrow \underline{N_2(\CC)}$ we get a commutative diagram 
		\[
		\begin{tikzcd}
		\Nerv_2(\CC)\arrow[r,"\alpha_F"]\arrow[d,"F"'] & \widetilde{\cchi}_{\CC}(F^\ast \widehat{\mathfrak{C}}_{\DD\downslash})\arrow[d]\arrow[dr] & \\
		\Nerv_2(\DD)\arrow[r,"s_F"']  &\widetilde{\cchi}_{\DD}(\widehat{\mathfrak{C}}_{\DD\downslash})\arrow[r,"\gamma"'] & \widetilde{\cchi}_\ast(L_{\mathcal{W}} (\CC^{\dagger}))
		\end{tikzcd}
		\]
		It will therefore suffice to show that the top composite is equivalent to $\rho_{\CC}\circ s_{\CC}$. We therefore consider the commutative diagram 
		\[
		\begin{tikzcd}
		\widetilde{\cchi}_{\CC}(\mathfrak{C}_{\CC\downslash})\arrow[r,"a"]\arrow[dr] & \widetilde{\cchi}_{\CC}(\widehat{\mathfrak{C}}_{\CC\downslash})\arrow[d,"c"]\arrow[dr] & \\
		& \widetilde{\cchi}_{\CC}(F^\ast \widehat{\mathfrak{C}}_{\DD\downslash})\arrow[r,"b"'] &\widetilde{\cchi}_\ast(L_{\mathcal{W}} (\CC^{\dagger})) 
		\end{tikzcd}
		\]
		and note that the top composite is $\rho_{\CC}$. By analogous reasoning to that above (now using assumption \ref{assump:markedinit} as well), both $c\circ a\circ s_{\CC}$ and $\alpha_F$ are initial objects in the $\infty$-category of sections of $\widetilde{\cchi}_{\CC}(F^\ast \widehat{\mathfrak{C}}_{\DD\downslash})$. Consequently, we find that the composite of $b$ with $\alpha_F$ is equivalent to the composite of the top path with $s_{\CC}$, i.e. $\rho_{\CC}\circ s_{\CC}$. Therefore, we find that $\hat{G}\circ F=\gamma\circ s_F\circ F\simeq \rho_{\CC}\circ s_{\CC}$. \qedhere
	\end{enumerate}
\end{proof}
\section{Corollaries and applications}
\label{sec:coraps}

The primary purpose of this section is twofold. First, we derive a more computationally tractable condition from \autoref{thm:thebigone}, and second, we provide brief r\'esum\'e of existing results which form special cases of \autoref{thm:thebigone}.

We begin with a corollary using an apparently stronger criterion, which we discussed in the introduction as \autoref{thm:AdagIntro}:

\begin{corollary}\label{cor:equivtobig}
	Let $F:\CC^\dagger \to \DD^\dagger$ be a functor of 2-categories with weak equivalences. Suppose that, for every object $d\in \DD$, 
	\begin{enumerate}
		\item There exists an object $c\in \CC$ and a marked morphism \begin{tikzcd}
			d \arrow[r,circled] & F(c)
		\end{tikzcd}
		\item Every marked morphism \begin{tikzcd}
			d \arrow[r,circled] & F(c)
		\end{tikzcd} 
		is initial in the localization $L_\mathcal{W}(\Nerv_2(\CC_{d\downslash})^\dagger)$. 
	\end{enumerate}
	Then the induced functor $\func{F_{\mathcal{W}}: L_\mathcal{W}(\Nerv_2(\CC^\dagger))\to L_\mathcal{W}(\Nerv_2(\DD^\dagger))}$ is an equivalence of $\infty$-categories. 
\end{corollary}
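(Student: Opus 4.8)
The plan is to derive the corollary directly from \autoref{thm:thebigone} by checking that its two hypotheses, combined with the standing assumption that $\CC^\dagger$ and $\DD^\dagger$ are \emph{2-categories with weak equivalences} (so that the marked morphisms, forming a wide subcategory, are closed under composition), imply all three hypotheses of \autoref{thm:thebigone}. Hypothesis \ref{assump:markedinit} of the theorem is verbatim hypothesis (2) of the corollary, so nothing is needed there.

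For hypothesis \ref{asump:init}, I would fix $d \in \DD$ and use hypothesis (1) to obtain a marked morphism $g_d \colon d \to F(c)$. Hypothesis (2) makes $g_d$ initial in $L_{\mathcal{W}}(\CC_{d\downslash}^\dagger)$, and since $g_d$ is in particular a marked morphism out of $d$ in $\DD$, \autoref{lem:IDis2cofinal} makes it initial in $L_{\mathcal{W}}(\DD_{d\downslash}^\dagger)$ as well. Thus $g_d$ witnesses hypothesis \ref{asump:init}.

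The substantive point is hypothesis \ref{asump:presundermarked}: for a marked morphism $f \colon b \to d$ in $\DD$, the precomposition functor $f^\ast \colon L_{\mathcal{W}}(\CC_{d\downslash}^\dagger) \to L_{\mathcal{W}}(\CC_{b\downslash}^\dagger)$ must preserve initial objects. Since $f^\ast$ is a functor of $\infty$-categories it preserves equivalences, and the initial objects of an $\infty$-category are mutually equivalent, so it suffices to exhibit a single initial object whose image is again initial. Taking the object $g_d$ from the previous step, its image $f^\ast(g_d)$ is the composite $g_d \circ f \colon b \to F(c)$; as $f$ and $g_d$ are both marked and $\DD^\dagger$ is a 2-category with weak equivalences, $g_d \circ f$ is again marked, hence initial in $L_{\mathcal{W}}(\CC_{b\downslash}^\dagger)$ by hypothesis (2) applied at $b$. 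This verifies hypothesis \ref{asump:presundermarked}, and \autoref{thm:thebigone} then yields the conclusion.

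I expect the only genuine obstacle to be the verification of \ref{asump:presundermarked}, and it is precisely there that closure of the weak equivalences under composition is indispensable: this is the property separating a 2-category with weak equivalences from a general marked 2-category, and without it the composite $g_d \circ f$ need not be marked, so the argument would break down.
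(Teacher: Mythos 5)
Your proposal is correct and follows exactly the route the paper takes: the paper's proof is a one-liner observing that, since marked morphisms are initial by hypothesis and the marking is composition-closed (being a 2-category with weak equivalences), all three conditions of \autoref{thm:thebigone} are immediate. Your write-up simply spells out the details the paper leaves implicit, including the correct identification of $f^\ast(g_d)$ with $g_d\circ f$ and the appeal to \autoref{lem:IDis2cofinal} for initiality in $L_{\mathcal{W}}(\DD^\dagger_{d\downslash})$.
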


\begin{proof}
	Since, by assumption, marked morphisms are initial and marked-ness is stable under composition (since we are dealing with 2-categories with weak equivalences), conditions \ref{asump:init}, \ref{assump:markedinit},  and \ref{asump:presundermarked} of \autoref{thm:thebigone} are all immediate. The corollary then follows.  
\end{proof}

\begin{remark}
	We say that this criterion is `apparently stronger' because in fact \autoref{cor:equivtobig} is equivalent to \autoref{thm:thebigone}. To see the reverse implication, let $\func{F:\CC^\dagger \to \DD^\dagger}$ be a functor of marked 2-categories satisfying the criteria of \autoref{thm:thebigone}.
	
	Since $g_d$ becomes an equivalence in $L_{\mathcal{W}}(\DD^\dagger)$, we can add $\{g_d\}_{d\in \DD}$ to the set of marked morphisms of $\DD$ without changing the localization. Since precomposition with $\func{g_d:d\to F(c)}$ sends the initial object $\id_{F(c)}$ to $g_d$, we also note that precompositon with $g_d$ preserves initial objects in the localized slices. More generally, we can close under composition without changing the localizations, so we set $\CC^s:=Q(\CC^\dagger)$ and $\DD^s:=Q(\DD,\mathcal{W}_\DD\cup \{g_d\}_{d\in \DD})$.  We will show that $\CC^s$ and $\DD^s$ satisfy the hypotheses of \autoref{cor:equivtobig}. 
	\begin{enumerate}
		\item Note that $\func{L_{\mathcal{W}}(\CC^\dagger_{d\downslash})\to L_{\mathcal{W}}(\CC^s_{d\downslash})}$ is a localization map, so every initial object in the former is also initial in the latter. In particular, for every $c\in \CC$, the object $\id_{F(c)}$ is inital in $L_{\mathcal{W}}(\CC^s_{F(c)\downslash})$.
		
		\item If we let \begin{tikzcd}
			f:d\arrow[r,circled] & F(c)
		\end{tikzcd} be a marked morphism in $\DD^s$, it is immediate from the construction that $\func{f^\ast:L_{\mathcal{W}}(\CC^\dagger_{d\downslash})\to L_{\mathcal{W}}(\CC^\dagger_{b\downslash})}$ preserves initial objects. A brief consideration of the commutative diagram 
		\[
		\begin{tikzcd}
		L_{\mathcal{W}}(\CC^\dagger_{d\downslash})\arrow[r]\arrow[d,"f^\ast"'] & L_{\mathcal{W}}(\CC^s_{d\downslash})\arrow[d,"f^\ast"]\\
		L_{\mathcal{W}}(\CC^\dagger_{b\downslash})\arrow[r] & L_{\mathcal{W}}(\CC^s_{b\downslash})
		\end{tikzcd}
		\]
		then shows that $\func{f^\ast:L_{\mathcal{W}}(\CC^s_{d\downslash})\to L_{\mathcal{W}}(\CC^s_{b\downslash})}$ preserves initial objects. 
	\end{enumerate}
	
	The hypotheses of \autoref{cor:equivtobig} immediately follow. For every $d\in \DD$, there is a marked morphism \begin{tikzcd}
		g_d: d \arrow[r,circled] & F(c)
	\end{tikzcd} 
	which shows the first hypothesis is satisfied. Moreover, for any marked morphism \begin{tikzcd}
		f: d \arrow[r,circled] & F(c)
	\end{tikzcd} in $\DD^s$, the fact that $f^\ast$ preserves initial objects, and $\id_{F(c)}$ is initial shows that $f=\id_{F(c)}\circ f$ is initial in $L_{\mathcal{W}}(\CC^s_{d\downslash})$, which shows the second hypothesis to be satisfied.
\end{remark}

Having derived this corollary, we turn to the ways in which \autoref{cor:equivtobig} generalizes other results in the literature. We begin with a proposition of Bullejos and Cegarra (from \cite{Bullejos_QuillenA}), the criterion of which is closest in spirit to ours.  

\begin{proposition}[Bullejos and Cegarra]\label{prop:BullejosRealization}
	Let $\func{F:\CC\to \DD}$ be a 2-functor. Suppose that, for every $d\in \DD$, there is a homotopy equivalence $|\Nerv_2(\CC_{d\downslash})|\simeq \ast$. Then $\func{|F|:|\Nerv_2(\CC)|\to |\Nerv_2(\DD)|}$ is a homotopy equivalence. 
\end{proposition}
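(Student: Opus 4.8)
The plan is to deduce this from \autoref{cor:equivtobig} by equipping both $\CC$ and $\DD$ with the \emph{maximal} marking, in which every $1$-morphism is a weak equivalence; then $F$ is tautologically a functor of $2$-categories with weak equivalences. The first thing to pin down is the translation between the $\infty$-categorical conclusion of \autoref{cor:equivtobig} and the space-level conclusion we want. Under the maximal marking, $L_{\mathcal{W}}(\Nerv_2(-))$ inverts every edge of the Duskin nerve, so it is the $\infty$-groupoid completion; since geometric realization factors through $\infty$-groupoid completion, one has $|L_{\mathcal{W}}(\Nerv_2(\CC))|\simeq |\Nerv_2(\CC)|$, and likewise for $\DD$. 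An equivalence of $\infty$-categories between $\infty$-groupoids is a homotopy equivalence of the underlying spaces, so once \autoref{cor:equivtobig} supplies that $F_{\mathcal{W}}$ is an equivalence, applying $|{-}|$ yields that $|F|$ is a homotopy equivalence. It therefore remains only to verify the two hypotheses of \autoref{cor:equivtobig}.

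The first hypothesis is immediate: since $|\Nerv_2(\CC_{d\downslash})|\simeq \ast$ is in particular non-empty, the comma $2$-category $\CC_{d\downslash}$ has an object, i.e.\ a morphism $d\to F(c)$, and this morphism is marked because every morphism is. For the second hypothesis I must show that every such morphism is initial in $L_{\mathcal{W}}(\Nerv_2(\CC_{d\downslash})^\dagger)$; equivalently, that this localization is contractible as an $\infty$-category. As a first step I would use that the localization map is both cofinal and coinitial to transport the hypothesis $|\Nerv_2(\CC_{d\downslash})|\simeq\ast$ into the statement that $L_{\mathcal{W}}(\Nerv_2(\CC_{d\downslash})^\dagger)$ has contractible classifying space.

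The hard part is the remaining gap. Under the maximal marking the marked edges of $\CC_{d\downslash}^\dagger$ are, by \autoref{def:markedslice}, precisely the morphisms whose structural $2$-cell is invertible, so $L_{\mathcal{W}}(\Nerv_2(\CC_{d\downslash})^\dagger)$ is a priori a \emph{finer} invariant than the $\infty$-groupoid completion, and a contractible classifying space does not \emph{formally} force an $\infty$-category to have every object initial (the walking arrow is a counterexample in general). The crux is thus to show that, for the lax comma $2$-category, contractibility of the classifying space does upgrade to contractibility of this localization --- concretely, that every object of $L_{\mathcal{W}}(\Nerv_2(\CC_{d\downslash})^\dagger)$ is initial. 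A natural approach is to exhibit the localization as an $\infty$-groupoid: every morphism $(u,\beta)$ of the comma factors, modulo the marked invertible-$2$-cell morphisms, through the purely ``vertical'' morphisms $(\id_{c'},\beta)$, and one would argue that the lax-slice structure, together with the contractibility hypothesis, renders these vertical morphisms invertible after localization, so that a contractible classifying space forces contractibility outright. This genuinely $2$-categorical step --- rather than the formal bookkeeping around realization --- is where I expect the real work to lie; granting it, \autoref{cor:equivtobig} applies and realization yields the desired homotopy equivalence $|F|$.
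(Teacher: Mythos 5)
Your overall strategy is exactly the paper's: equip $\CC$ and $\DD$ with the maximal marking, identify $L_{\mathcal{W}}$ with the $\infty$-groupoid completion so that the conclusion of \autoref{cor:equivtobig} realizes to the desired homotopy equivalence, and check the two criteria. Your handling of the translation and of the first criterion matches the paper's (much terser) proof. The divergence is at the second criterion: the paper declares it ``immediate'' from non-emptiness and contractibility of the slices, whereas you correctly observe that, with the marking of \autoref{def:markedslice}, the localization $L_{\mathcal{W}}(\Nerv_2(\CC_{d\downslash})^\dagger)$ only inverts edges whose structural $2$-cell is invertible, so contractibility of $|\Nerv_2(\CC_{d\downslash})|$ does not formally force every object to be initial. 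You then leave precisely this step open (``granting it'').

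That is a genuine gap, and the step you propose to supply --- that the vertical morphisms $(\id,\beta)$ become invertible after localizing at the marked edges --- is false in general. Take $\CC=\ast$ and let $\DD$ have two objects $0,1$ with $\DD(0,1)=[1]=\{a\to b\}$, $\DD(1,0)=\emptyset$ and trivial endomorphism categories, with $F\colon\ast\to\DD$ picking out $1$. Then $\CC_{0\downslash}$ has objects $a,b$ and, since a morphism $f\to g$ in the slice is a $2$-cell $g\Rightarrow f$, it is isomorphic to the $1$-category $\{b\to a\}$; hence $|\Nerv_2(\CC_{0\downslash})|\simeq\ast$ and the hypothesis of the proposition holds (as does its conclusion, since $|\Nerv_2(\DD)|\simeq\ast$). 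But the unique non-identity edge $b\to a$ of the slice is the vertical morphism $(\id_{\ast},\beta)$ with $\beta$ non-invertible, so it is unmarked, $L_{\mathcal{W}}(\Nerv_2(\CC_{0\downslash})^\dagger)\simeq\Delta^1$, and the marked morphism $a\colon 0\to F(\ast)$ is \emph{not} initial there. So the second criterion of \autoref{cor:equivtobig} genuinely fails in a situation covered by the Bullejos--Cegarra statement; your worry is well founded, the ``walking arrow'' obstruction you mention actually occurs inside a lax slice, and neither your sketch nor the paper's one-line justification completes the reduction. To salvage the argument one must change what is being localized --- e.g.\ mark \emph{all} edges of the lax slices and verify that the machinery of the main theorem still applies with that marking --- rather than hope that the \autoref{def:markedslice} marking already inverts the vertical morphisms.
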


\begin{proof}
	We view $\CC$ and $\DD$ as having every 1-morphism marked, so that the localizations $L_{\mathcal{W}}(N_2(\CC))$ and $L_{\mathcal{W}}(N_2(\DD))$ coincide with geometric realizations $|\Nerv_2(\CC)|$ and $|\Nerv_2(\DD)|$, respectively. Since the slice 2-categories $\CC_{d\downslash}$ are non-empty and contractible it is immediate that both of the criteria of \autoref{cor:equivtobig} are fulfilled. The proposition follows. 
\end{proof}

\begin{remark}
	The proof of \autoref{prop:BullejosRealization} presented in \cite{Bullejos_QuillenA} uses more classical techniques --- like the yoga of bisimplicial sets --- to obtain the result. In this sense, their proof is not dissimilar to Quillen's original proof of Theorem A. Much as Theorem A has been proved and reproved using new layers of technology (see, e.g. \cite{HTT}), \autoref{prop:BullejosRealization} can be approached either from classical homotopy-theoretic or from $\infty$-categorical viewpoints.
\end{remark}

We now turn to an $\infty$-categorical proposition of Walde (from \cite{Walde}), which at first blush resembles \autoref{thm:thebigone} rather less. It will turn out that this is again a special case of \autoref{cor:equivtobig}, and provides an example of a special case in which the criteria are easier to check 1-categorically. We first develop a bit of notation, following \cite{Walde}

\begin{definition}
	Let $\func{F:\mathcal{C}\to \mathcal{D}}$ be a functor of 1-categories. For each $d\in \mathcal{C}$, we define the \emph{weak fiber}  $\mathcal{C}_d\subset \mathcal{C}_{d/}$ to be the full subcategory on the isomorphisms $\func{d\to[\simeq] f(c)}$. We similarly definte $\mathcal{C}^d\subset \mathcal{C}_{/d}$ to be the full subcategory on the isomorphisms. 
\end{definition}

\begin{proposition}\label{prop:GeneralWalde}
	Let $\func{F:\mathcal{C}\to \mathcal{D}}$ be a functor of 1-categories. Suppose that for each $d\in \mathcal{D}$
	\begin{enumerate}
		\item There is a contractible subcategory $\mathcal{B}_d\subset \mathcal{C}_d$ 
		\item The inclusion $\func{i_d:\mathcal{B}_d\to \mathcal{C}_{d/}}$ is coinitial. 
	\end{enumerate}
	Let $\mathcal{W}\subset \mathcal{C}$ be the wide subcategory of morphisms $f$ such that $F(f)$ is an isomorphism. Then 
	\[
	\func{F_\mathcal{W}:L_{\mathcal{W}}(\mathcal{C})\to \mathcal{D}} 
	\]
	is an equivalence of $\infty$-categories. 
\end{proposition}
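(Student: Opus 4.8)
The plan is to deduce the statement directly from \autoref{cor:equivtobig} by viewing $\mathcal{C}$ and $\mathcal{D}$ as $2$-categories with only identity $2$-morphisms, equipping $\mathcal{C}$ with the marking $\mathcal{W}$ and $\mathcal{D}$ with the marking consisting of all isomorphisms. With these choices $F$ is a homotopical functor (by the very definition of $\mathcal{W}$), one has $L_{\mathcal{W}}(\mathcal{D}^\dagger)\simeq \mathcal{D}$ because localizing a $1$-category at its isomorphisms returns the category itself, and the induced map on localizations is exactly $F_{\mathcal{W}}\colon L_{\mathcal{W}}(\mathcal{C})\to \mathcal{D}$. Moreover the marked lax slice $\mathcal{C}_{d\downslash}^\dagger$ reduces to the ordinary comma category $\mathcal{C}_{d/}$ with a morphism marked precisely when its underlying morphism of $\mathcal{C}$ lies in $\mathcal{W}$, and a \emph{marked morphism} $d\to F(c)$ is exactly an isomorphism $d\xrightarrow{\simeq} F(c)$, i.e.\ an object of the weak fiber $\mathcal{C}_d$. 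It therefore suffices to verify the two hypotheses of \autoref{cor:equivtobig}: (i) for each $d$ the weak fiber $\mathcal{C}_d$ is nonempty, and (ii) every object of $\mathcal{C}_d$ is initial in $L_{\mathcal{W}}(\mathcal{C}_{d/})$.

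Hypothesis (i) is immediate, since $\mathcal{B}_d$ is contractible, hence nonempty, and $\mathcal{B}_d\subset \mathcal{C}_d$. For (ii) the first observation is that \emph{every} morphism of $\mathcal{C}_d$ lies in $\mathcal{W}$: a morphism $(c,g)\to (c',g')$ in $\mathcal{C}_d$ is an $h\colon c\to c'$ with $F(h)=g'\circ g^{-1}$, which is an isomorphism because $g,g'$ are. Thus $\mathcal{C}_d$, and with it its full subcategory $\mathcal{B}_d$, are full subcategories of $\mathcal{C}_{d/}$ all of whose morphisms are marked, so the localization map $\gamma\colon \mathcal{C}_{d/}\to L$, where I write $L:=L_{\mathcal{W}}(\mathcal{C}_{d/})$, carries every morphism of $\mathcal{C}_d$ to an equivalence.

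To prove (ii) I would first produce an initial object of $L$. Since $\gamma$ is coinitial (the localization map is both cofinal and coinitial, as recorded in the preliminaries following \cite{Cisinski}) and $i_d$ is coinitial by hypothesis, the composite $\gamma\circ i_d\colon \mathcal{B}_d\to L$ is coinitial. As $\gamma\circ i_d$ inverts all morphisms of $\mathcal{B}_d$, it factors through the $\infty$-groupoid completion $|\mathcal{B}_d|\simeq \ast$; writing this factorization as $\mathcal{B}_d\to \ast \xrightarrow{\,\overline{g}_0\,} L$ and using that $\mathcal{B}_d\to\ast$ is coinitial (its source being contractible) together with the right-cancellation property of coinitial functors (the dual of \cite[Prop.~4.1.1.3]{HTT}), I conclude that $\overline{g}_0\colon \ast\to L$ is coinitial, i.e.\ that it selects an initial object of $L$. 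It remains to upgrade this to \emph{every} object of $\mathcal{C}_d$, and here the full strength of the coinitiality of $i_d$ enters. Fixing $g\in \mathcal{C}_d$, coinitiality makes the comma category $\mathcal{B}_d\times_{\mathcal{C}_{d/}}(\mathcal{C}_{d/})_{/g}$ weakly contractible, in particular nonempty, so there is a $b\in \mathcal{B}_d$ and a morphism $i_d(b)\to g$ in $\mathcal{C}_{d/}$. As both endpoints lie in $\mathcal{C}_d$, this morphism lies in $\mathcal{C}_d$ and is therefore marked, whence $\gamma(g)\simeq \gamma(i_d(b))=\overline{g}_0$ in $L$; since an object equivalent to an initial object is itself initial, $\gamma(g)$ is initial. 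This establishes (ii), and \autoref{cor:equivtobig} then gives that $F_{\mathcal{W}}$ is an equivalence.

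I expect the main obstacle to be exactly this last step --- promoting initiality from the chosen model $\mathcal{B}_d$ to the whole weak fiber $\mathcal{C}_d$. Mere contractibility of $\mathcal{B}_d$ does not suffice (a contractible category can map coinitially without every image being initial, e.g.\ $\mathrm{id}\colon\Delta^1\to\Delta^1$), and what rescues the argument in our situation is precisely that all morphisms of $\mathcal{C}_d$ are marked, forcing the factorization through $\ast$, combined with extracting the comparison morphisms $i_d(b)\to g$ from the coinitiality hypothesis. The remaining bookkeeping --- that the markings and slices reduce as claimed, and that coinitial functors compose and cancel --- is routine given the results recalled in the preliminaries.
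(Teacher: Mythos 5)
Your proposal is correct and follows essentially the same route as the paper: reduce to \autoref{cor:equivtobig} with the marking $\mathcal{W}$ on $\mathcal{C}$ and the isomorphisms on $\mathcal{D}$, deduce non-emptiness from $\mathcal{B}_d\neq\emptyset$, use contractibility of $\mathcal{B}_d$ plus coinitiality of $i_d$ (and of the localization map) to get an initial object of $L_{\mathcal{W}}(\mathcal{C}_{d/})$ landing in $\mathcal{B}_d$, and then use coinitiality again to connect an arbitrary object of $\mathcal{C}_d$ to $\mathcal{B}_d$ by a morphism that is marked (the paper phrases this via 2-out-of-3, you via fullness of $\mathcal{C}_d$ --- same content). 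Your spelled-out factorization through $|\mathcal{B}_d|\simeq\ast$ with right-cancellation of coinitial maps is just a more explicit version of the step the paper states tersely.
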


\begin{proof}
	We consider $\mathcal{C}$ and $\mathcal{D}$ as categories with weak equivalences, where the marking on $\mathcal{C}$ is given by $\mathcal{W}$, and the marking on $\mathcal{D}$ consists of the isomorphisms. By the first hypothesis, there is at least one isomorphism $\func{d\to[\simeq]F(c)}$ for each $d\in \mathcal{C}$, so the first condition in \autoref{cor:equivtobig} is satisfied. 
	
	Since $\mathcal{B}_d$ is contractible and $i_d$ is coinitial, the induced functor on localizations $i_d: L_{\mathcal{W}}(\mathcal{B}_d)\to L_{\mathcal{W}}(\mathcal{C}_{d/})$ is coinitial. It thus follows that the elements of $\mathcal{B}_d$ are initial in $L_{\mathcal{W}}(\mathcal{C}_{d/})$. Moreover, since $i_d$ is coinitial, given an isomorphism $\func{f:d\to[\simeq]F(c)}$, the slice category $(\mathcal{B}_d)_{/f}$ is non-empty, i.e. we have a commutative diagram 
	\[
	\begin{tikzcd}
	& d\arrow[dr,"f"]\arrow[dl,"g"'] & \\
	F(b)\arrow[rr,"F(h)"'] & &F(c)
	\end{tikzcd}
	\] 
	where $g\in \mathcal{B}_d$. By 2-out-of-3, this means that $F(h)$ is an isomorphism in $\mathcal{D}$, and thus $h\in \mathcal{W}$. Therefore, $g$ and $f$ are equivalent in $L_{\mathcal{W}}(\mathcal{C}_{d/})$, and thus $f$ is initial. 
	
	The criteria of \autoref{cor:equivtobig} are thus satisfied, and the conclusion follows immediately. 
\end{proof}

\begin{corollary}[Walde]
	Let $\func{F:\mathcal{C}\to \mathcal{D}}$ be a functor of 1-categories. Suppose that for each $d\in \mathcal{D}$
	\begin{enumerate}
		\item There is a subcategory $\mathcal{B}_d\subset \mathcal{C}^d$ and an initial object $f_d\in \mathcal{B}_d$. 
		\item The inclusion $\func{i_d:\mathcal{B}_d\to \mathcal{C}_{/d}}$ is cofinal. 
	\end{enumerate}
	Let $\mathcal{W}\subset \mathcal{C}$ be the wide subcategory of morphisms $f$ such that $F(f)$ is an isomorphism. Then 
	\[
	\func{F_\mathcal{W}:L_{\mathcal{W}}(\mathcal{C})\to \mathcal{D}} 
	\]
	is an equivalence of $\infty$-categories. 
\end{corollary}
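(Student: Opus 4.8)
The plan is to deduce the statement from \autoref{prop:GeneralWalde} by passing to opposite categories, so the only real work is careful bookkeeping with dualities. First I would form the opposite functor $F^{\op}\colon \mathcal{C}^{\op}\to \mathcal{D}^{\op}$ and record the relevant identifications. For a fixed $d$, the comma category of $F^{\op}$ at $d$ --- whose objects are maps $d\to F^{\op}(c)$ in $\mathcal{D}^{\op}$, i.e.\ maps $F(c)\to d$ in $\mathcal{D}$ --- is canonically isomorphic to $(\mathcal{C}_{/d})^{\op}$; under this isomorphism the weak fiber $(\mathcal{C}^{\op})_d$, being the full subcategory on the isomorphisms, is identified with $(\mathcal{C}^d)^{\op}$. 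Thus the over-slices and weak fibers governing the hypotheses of the corollary become exactly the under-slices and weak fibers governing \autoref{prop:GeneralWalde}, read in $\mathcal{C}^{\op}$.

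Next I would verify that the hypotheses of \autoref{prop:GeneralWalde} hold for $F^{\op}$ with the subcategories $\widetilde{\mathcal{B}}_d := (\mathcal{B}_d)^{\op}\subset (\mathcal{C}^d)^{\op} = (\mathcal{C}^{\op})_d$. Since $\mathcal{B}_d$ has an initial object $f_d$ it is contractible, and contractibility is preserved under passing to the opposite (the nerve of $\mathcal{B}_d^{\op}$ has the same geometric realization as that of $\mathcal{B}_d$); hence $\widetilde{\mathcal{B}}_d$ is contractible, which gives hypothesis (1). For hypothesis (2), the inclusion $i_d\colon \mathcal{B}_d\to \mathcal{C}_{/d}$ is cofinal by assumption, and a functor is cofinal precisely when its opposite is coinitial, so $i_d^{\op}\colon \widetilde{\mathcal{B}}_d\to (\mathcal{C}_{/d})^{\op}=(\mathcal{C}^{\op})_{d/}$ is coinitial, as required.

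Finally I would transport the conclusion back. The wide subcategory $\mathcal{W}$ of morphisms inverted by $F$ maps, under $(-)^{\op}$, to the wide subcategory $\mathcal{W}^{\op}$ of morphisms inverted by $F^{\op}$, because a morphism is an isomorphism if and only if its opposite is. \autoref{prop:GeneralWalde} applied to $F^{\op}$ therefore shows that $L_{\mathcal{W}^{\op}}(\mathcal{C}^{\op})\to \mathcal{D}^{\op}$ is an equivalence. Using the standard compatibility $L_{\mathcal{W}^{\op}}(\mathcal{C}^{\op})\simeq L_{\mathcal{W}}(\mathcal{C})^{\op}$, this equivalence is precisely $(F_{\mathcal{W}})^{\op}$, and since a functor of $\infty$-categories is an equivalence if and only if its opposite is, we conclude that $F_{\mathcal{W}}\colon L_{\mathcal{W}}(\mathcal{C})\to \mathcal{D}$ is an equivalence. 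The main thing to be careful about is not any single hard step but the coherent tracking of all the dualities at once --- over- versus under-slices, initial versus terminal objects (so that ``has an initial object'' dualizes to ``has a terminal object,'' both special cases of ``contractible''), cofinal versus coinitial functors, and the compatibility of $L_{\mathcal{W}}$ with opposites.
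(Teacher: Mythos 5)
Your proposal is correct and follows exactly the paper's own argument, which simply states that the corollary follows from applying \autoref{prop:GeneralWalde} to $F^{\op}\colon\mathcal{C}^{\op}\to\mathcal{D}^{\op}$. Your careful tracking of the dualities (over- versus under-slices, initial object implying contractibility, cofinal versus coinitial, and compatibility of $L_{\mathcal{W}}$ with opposites) just makes explicit the bookkeeping that the paper leaves implicit.
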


\begin{proof}
	This follows immediately from \autoref{prop:GeneralWalde} applied to $F^\op:\mathcal{C}^\op\to \mathcal{D}^\op$. 
\end{proof}

\begin{remark}
	The criteria of \autoref{prop:GeneralWalde} and those Walde are of particular interest despite being less general for several reasons. Firstly, they provide conditions under which a 1-category can be viewed as an $\infty$-categorical localization of another category. Walde uses the proposition to give a model for invertible cyclic $\infty$-operads in terms of $\infty$-functors out of a 1-category. Secondly, these conditions lend themselves more easily to direct computation.
\end{remark}

\section{The cofinality conjecture}\label{sec:cofinality}

In this section we develop a basic theory of marked colimits of 2-categories, providing a natural framework  to interpret \autoref{thm:thebigone} as a cofinality statement. The larger context of Quillen's Theorem A is that two important properties coincide (though this is no coincidence): the criterion that a functor $F$ must satisfy in Quillen's Theorem A is \emph{equivalent} to precomposition with $F$ preserving $(\infty,1)$-colimits. Moreover, the truncation of the criterion is equivalent to precomposition with $F$ preserving strict 1-categorical colimits. 

Here, we provide an analogue of the latter statement one rung up the ladder of categorification. We show that an appropriately decategorified version of the criterion for Theorem A\textsuperscript{$\dagger$} is equivalent to precomposition with $F$ preserving marked colimits of 2-categories. 

If we view the framework discussed in this section as a reflection of a possible theory of marked $(\infty,2)$-colimits which truncates to the 2-categorical theory, this statement may be taken as evidence that the dual nature of Quillen's criterion generalizes to the 2-categorical case. The precise formulation of this is the \emph{cofinality conjecture} developed at the end of the section.

\subsection{Marked colimits}
We begin with the key definitions and properties necessary to work with marked 2-colimits.

\begin{definition}
	Let $\CC^{\dagger}$ be a marked 2-category. We define a 2-functor
	\[
		\func*{\hat{\mathfrak{C}}^{\dagger}_{\CC \upslash}: \CC^{(\op,\mathblank)} \to \Cat; c \mapsto \on{ho}\left(\CC_{c \upslash}\right)^{\dagger}[W^{-1}]}
	\] 
\end{definition}

\begin{definition}
	Let $\CC^{\dagger}$ be a marked 2-category. Given a 2-category $\mathbb{A}$ and a  2-functor $\func{F: \CC \to \mathbb{A}}$, we define 
	\[
		\func*{N^{\dagger}_{F}: \mathbb{A} \to \Cat; a \mapsto \on{Nat}\left(\hat{\mathfrak{C}}^{\dagger}_{\CC \upslash},\mathbb{A}(F(\mathblank),a)\right)}
	\]
\end{definition}

\begin{definition}\label{def:markedcone}	
	We call a natural transformation $\func{\hat{\mathfrak{C}}^{\dagger}_{\CC \upslash} \nat \mathbb{A}(F(\mathblank),a)}$ a \emph{marked cocone} for $F$. It is easy to check that such natural transformation is uniquely determined by the following data:
	\begin{itemize}[noitemsep]
		\item An object $a \in \mathbb{A}$.
		\item For every object $c \in \CC^{\dagger}$ a 1-morphism $\func{\alpha_c: F(c) \to d}$.
		\item For every morphism $\func{c \to[u] c'}$ in $\CC^{\dagger}$ a 2-morphism $\func{\alpha_u:\alpha_c \nat \alpha_{c'}\circ F(u)}$.
	\end{itemize}
	These data are subject to the conditions:
	\begin{enumerate}
		\item\label{2comma:invertmarked} For every marked morphism $u$ in $\CC^{\dagger}$, the 2-morphism $\alpha_{u}$ is invertible.
		\item\label{2comma:coherent1} Given a pair of 1-morphisms $\func{c \to[u] c' \to[v] c''}$, the composite
		\[
		\begin{tikzcd}[ampersand replacement=\&]
		\alpha_c \arrow[r,nat,"\alpha_u"] \& \alpha_{c'}\circ F(u) \arrow[r,nat,"\alpha_{v}*F(u)"] \& \alpha_{c''}\circ F(vu) 
		\end{tikzcd}
		\]
		equals $\alpha_{vu}$ and similarly we have that $\alpha_{id}$ is the identity 2-cell.
		\item\label{2comma:coherent2} Given a 2-morphism $\func{\beta: u \nat w}$, the 2-morphism $\func{\alpha_c \nat[\alpha_u] \alpha_{c'}\circ F(u) \nat[\alpha_{c'}*\beta] \alpha_{c'}\circ F(w)}$ equals $\alpha_{w}$.
	\end{enumerate}
\end{definition}

\begin{remark}
	It is worth noting that this definition is effectively the same as that of \cite{Dubuc_limits}. In the minimally and maximally marked cases it specializes to the notion of pseudocolimit and lax colimit, respectively.  
\end{remark}

\begin{definition}\glsadd{El}
	Let $\mathbb{A}^{\dagger}$ be a marked 2-category and consider a 2-functor
	\[
	\func{F: \mathbb{A} \to \Cat}
	\]
	We define a marked 2-category $\operatorname{El}(F)^{\dagger}$, as follows:
	\begin{itemize}[noitemsep]
		\item Objects are pairs $(a,x)$  where $a  \in \mathbb{A}$ and $x \in F(a)$.
		\item 1-morphisms from $(a,x)$ to   $(a',y)$  are given by pairs $(u,\varphi)$ where $\func{a  \to[u] a'}$ in $\mathbb{A}$ and $\func{\varphi:F(u)x \to y}$ in $F(a')$.
		\item 2-morphisms $\func{(u,\varphi) \nat (v,\psi)}$ are given by a 2-morphism $\func{u \nat[\theta] v}$ in $\mathbb{A}$ making the following diagram commute
		\[
		\begin{tikzcd}[ampersand replacement=\&]
		F(u)x \arrow[dr] \arrow[rr,"F(\theta)_x"] \& \& F(v)x \arrow[dl]\\
		\&  y \& 
		\end{tikzcd}
		\]
	\end{itemize}
	We equip $\operatorname{El}(F)^{\dagger}$ with a marking by  declaring $(u,\varphi)$ to be marked if and only if $u$  is marked and $\varphi$ is an isomorphism.
\end{definition}

\begin{remark}\label{rem:scaledcocartesian}
	This is just a special case of the 2-categorical Grothendieck construction of \cite{Buckley} decorated with a marking. Observe that the edges marked are locally coCartesian and that the map $\func{\on{El}(F)^{\dagger} \to \mathbb{A}}$ is a scaled coCartesian fibration (cf. Proposition 6.9 and Lemma 6.10 from \cite{Harpaz_Quillen}).
\end{remark}

\begin{definition}
	We define the marked 2-category, $\mathbb{A}_{F \upslash}^{\dagger}$ of marked cocones for $F$ as $\on{El}(N^{\dagger}_{F})$. Unraveling the definitions we obtain the following description:
	\begin{itemize}[noitemsep]
		\item Objects are given by marked cones.
		\item Given two marked cones, $\set{\func{\alpha_c: F(c) \to a}}_{c \in \CC}$, $\set{\func{\beta_c: F(c) \to a'}}_{c \in \CC}$ we define a morphism $\func{\set{\alpha_c}_{c \in \CC} \to \set{\beta_c}_{c \in \CC}}$ to be the data of a 1-morphism $\func{\theta: a \to a'}$ and a family of 2-morphisms,
		\[
		\set{\func{\epsilon_c:\theta \circ \alpha_c \nat \beta_c }}_{c \in \CC},
		\]
		such that the diagram
		\[
		\begin{tikzcd}[ampersand replacement=\&]
		\theta \circ \alpha_c \arrow[d,nat] \arrow[r, nat] \& \beta_c  \arrow[d,nat]\\
		\theta \circ  \alpha_{c'} \circ F(u) \arrow[r,nat] \& \beta_{c'}\circ F(u)
		\end{tikzcd}
		\]
		commutes for every morphism $u$ in $\CC$.
		\item Given two morphisms $\set{\func{\epsilon_c,\theta}}_{c \in \CC}, \set{\func{\eta_c,\gamma}}_{c \in \CC}$ we define a 2-morphism to be the data of a 2-morphism $\func{ \theta \nat \gamma}$ making the  diagram
		\[
		\begin{tikzcd}[ampersand replacement=\&]
		\theta \circ \alpha_c \arrow[d,nat] \arrow[r,nat, "\epsilon_c"] \& \beta_{c} \\
		\gamma \circ \alpha_c \arrow[ur,nat,"\eta_c"'] \& .
		\end{tikzcd}
		\]
		commute
	\end{itemize}
	We declare a morphism to be marked if the 2-morphisms $\epsilon_c$ are all invertible.
\end{definition}

\begin{definition}\glsadd{colimdag}
	Given a marked category $\mathbb{C}^{\dagger}$ and a 2-functor $\func{F: \mathbb{C} \to \mathbb{A}}$, we say that $a \in \mathbb{A}$ is the \emph{marked colimit} of $F$ if there exists a natural transformation $\func{\mathbb{A}(a,\mathblank) \nat N^{\dagger}_{F}}$ which is a levelwise equivalence of categories. We will denote the marked colimit of $F$ by $\on{colim^{\dagger}}F$.
\end{definition}

\begin{example}\label{exam:adjiscolim}
	Let us equip $[2]$ with a marking by declaring all edges  except $\func{1 \to 2}$ to be marked. We will denote this marked category by $[2]^{\diamond}$. Consider a pair of adjoint functors
	\[
	L: \mathcal{C} \llra \mathcal{D}:R
	\]
	such that $L\circ R= \on{id}_D$ is the counit of the adjunction. Let us define a functor
	\[
	\func{T: [2] \to \Cat}
	\]
	by mapping $T(0)=\mathcal{D}$, $T(1)=\mathcal{C}$ and $T(2)=\mathcal{D}$. We will denote the morphisms of $[2]$ by its source and target. Then, we define $T(01)=R$ and $T(12)=L$. We will show that $\on{colim^{\diamond}}T \simeq \mathcal{C}$.
	
	To construct a marked cone for $T$ we set $\alpha_0=R$, $\alpha_1=\on{id}_C$ and $\alpha_{2}=R$. We set $\alpha_{01}=\on{id}_R$, $\alpha_{12}=\eta$ (the unit of the adjunction) and $T(02)=\on{id}_R$. Note that the triangle identities imply that this is indeed a marked cone. Using the 2-categorical Yoneda lemma (cf. e.g. Chapter 8 of \cite{Johnson_Yau}) we obtain a natural transformation 
	\[
		\func{\scr{R}:\Cat(\mathcal{C},\mathblank) \nat N^{\diamond}_{T}.}
	\]
	
	  We check that $\scr{R}$ is a levelwise equivalence of categories. Let $\scr{X} \in \Cat$, and consider the functor
	 \[
	 \func{\scr{R}_{\scr{X}}: \Cat(\mathcal{C},\scr{X}) \to \on{Nat}\left(\widehat{[\frak{2}]}^{\diamond}_{[2]\upslash},\Cat(T(\mathblank),\scr{X})\right). }
	 \]
	 Unwinding the definitions, it easy to check that $\scr{R}_{\scr{X}}$ is fully faithful. To show essential surjectivity let $\set{\beta_i}_{i \in [2]}$ be a marked cone with tip $\scr{X}$. We construct a morphism of marked cones $\set{\func{\epsilon_i:\beta_1 \circ \alpha_i \nat \beta_i}}_{i \in [2]}$ as follows: First we note that we have by assumption an invertible 2-morphism
	\[
	\begin{tikzcd}[ampersand replacement=\&]
	\beta_{01}: \beta_0 \arrow[r,nat,"\simeq"] \& \beta_1 \circ R= \beta_1 \circ \alpha_0
	\end{tikzcd}
	\]
	So we set $\epsilon_0=(\beta_{01})^{-1}$. It is clear that we can set $\epsilon_1=\on{id}_{\beta_1}$. Let us consider the 2-morphism
	\[
	\func{\beta_{12}:\beta_1 \nat \beta_2 \circ L}
	\]
	then whiskering with $R$ we obtain a 2-morphism $\func{\beta_{12}*R: \beta_1 \circ R \nat \beta_2}$. Since the composite
	\[
	\func{\beta_0 \nat \beta_1 \circ R \nat \beta_2}
	\]
	is invertible, so is $(\beta_{12}*R)$ and we set $\epsilon_2= (\beta_{12}*R)$. The only nontrivial verification left to do in order to show that we have defined a morphism of marked cones its to exhibit commutativity of the following diagram
	\[
	\begin{tikzcd}[ampersand replacement=\&]
	\beta_1 \circ \alpha_1 \arrow[r,nat] \arrow[d,nat] \& \beta_1 \arrow[d,nat]\\
	\beta_1 \circ \alpha_2 \circ L \arrow[r,nat] \& \beta_2 \circ L
	\end{tikzcd}
	\]
	which follows immediately from the triangle identities. Since the 2-morphisms $\epsilon_i$ are all invertible we conclude that we have defined a marked edge in the category of marked cones. This finishes the proof. 
\end{example}

\begin{theorem}\label{thm:buckleycomputes}
	Let $\CC^{\dagger}$ be a marked 2-category and consider a 2-functor
	\[
	\func{F: \CC \to \Cat.}
	\]
	Equip $\ho\left(\operatorname{El}(F)^{\dagger}\right)$ with the induced marking and denote its 1-categorical localization by $\ho \left(\operatorname{El}(F)\right)[W^{-1}]$. Then, we have an equivalence of categories $\operatorname{colim}^{\dagger}F \isom \ho\left(\operatorname{El}(F)\right)[W^{-1}]$.
\end{theorem}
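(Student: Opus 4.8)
The plan is to show that $\ho(\on{El}(F))[W^{-1}]$ corepresents the functor $N_F^\dagger$ of marked cocones, which by the definition of marked colimit identifies it with $\on{colim}^\dagger F$. Writing $\scr{X}$ for a variable object of $\Cat$, I will exhibit a natural transformation $\Cat(\ho(\on{El}(F))[W^{-1}],\mathblank)\nat N_F^\dagger$ and check that it is a levelwise equivalence of categories.

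First I would construct the universal marked cocone. For each $c\in\CC$ there is a canonical functor $\iota_c\colon F(c)\to\on{El}(F)$, $x\mapsto(c,x)$, and for each $u\colon c\to c'$ the morphisms $(u,\id)\colon(c,x)\to(c',F(u)x)$ form a natural transformation $\iota_c\nat\iota_{c'}\circ F(u)$. Composing with the localization $\gamma\colon\on{El}(F)\to\ho(\on{El}(F))[W^{-1}]$ gives functors $\lambda_c=\gamma\circ\iota_c$ together with 2-cells $\lambda_u$ whose components are the $\gamma(u,\id)$. Since a marked morphism $(u,\varphi)$ factors as $(\id,\varphi)\circ(u,\id)$ with $\varphi$ invertible and $(u,\id)$ marked, each $\lambda_u$ with $u$ marked is inverted by $\gamma$; and the coherences of \autoref{def:markedcone} hold because composition in $\on{El}(F)$ is strict and $F$ preserves composition on the nose. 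Thus $\lambda=\{\lambda_c,\lambda_u\}$ is a marked cocone, and by the Yoneda lemma it induces the desired natural transformation $\Theta_{\scr{X}}\colon G\mapsto G_\ast\lambda$.

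Next I would factor $\Theta_{\scr{X}}$ through a chain of equivalences
\[
\Cat(\ho(\on{El}(F))[W^{-1}],\scr{X})\;\simeq\;\Cat_W(\ho(\on{El}(F)),\scr{X})\;\simeq\;2\!\Cat_W(\on{El}(F)^\dagger,\scr{X})\;\simeq\;N_F^\dagger(\scr{X}),
\]
where $\Cat_W$ and $2\!\Cat_W$ denote functors (resp.\ strict 2-functors) sending marked morphisms to isomorphisms. The first equivalence is the universal property of the 1-categorical localization; the second is the adjunction between $\ho$ and the inclusion of locally discrete 2-categories, valid because $\scr{X}$ is a $1$-category with only identity 2-cells. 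The third equivalence --- the marked universal property of the Grothendieck construction --- is the heart of the argument.

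For that last step I would set up the correspondence $G\leftrightarrow\{\alpha_c,\alpha_u\}$ by $\alpha_c=G\circ\iota_c$ and $(\alpha_u)_x=G(u,\id)$, with inverse $G(c,x)=\alpha_c(x)$ and $G(u,\varphi)=\alpha_{c'}(\varphi)\circ(\alpha_u)_x$ via the factorization $(u,\varphi)=(\id_{c'},\varphi)\circ(u,\id)$. The content is that the three conditions of \autoref{def:markedcone} are exactly what makes this a well-defined strict 2-functor descending along $\ho$: condition \ref{2comma:coherent1} is functoriality under composition and units, while condition \ref{2comma:coherent2} is precisely constancy of $G$ on connected components of hom-categories, since a 2-morphism $(u,\varphi)\nat(w,\psi)$ is a $\theta\colon u\nat w$ with $\psi\circ F(\theta)_x=\varphi$. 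The marking condition \ref{2comma:invertmarked} matches ``$G$ inverts $W$'' through the same factorization. On morphisms, a natural transformation $\mu\colon G\nat G'$ corresponds to a family $m_c\colon\alpha_c\nat\beta_c$ via $\mu_{(c,x)}=(m_c)_x$, with the naturality of $\mu$ along the edges $(\id_c,\psi)$ and $(u,\id)$ decomposing into the naturality of each $m_c$ and its compatibility with the $\alpha_u,\beta_u$. I expect the main obstacle to be precisely this bookkeeping: confirming that the cocone coherences assemble, with nothing missing or redundant, into a strict marking-inverting 2-functor on $\on{El}(F)^\dagger$, and tracking the whiskering identity of condition \ref{2comma:coherent2} through the factorization of a general morphism. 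Everything else reduces to universal properties already in hand.
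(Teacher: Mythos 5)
Your proposal matches the paper's proof: both construct the same canonical cocone $\alpha_c\colon x\mapsto(c,x)$, $(\alpha_u)_x=(u,\id)$ into $\ho\left(\on{El}(F)\right)[W^{-1}]$ and verify the conditions of \autoref{def:markedcone}, after which the paper declares representability ``a straightforward exercise''; your chain of universal properties (1-categorical localization, the $\ho$-adjunction, the marked Grothendieck correspondence) simply carries that exercise out. One small correction: condition \ref{2comma:coherent2} for the universal cocone does not follow from strictness of composition alone --- the components $(u,F(\beta)_x)$ and $(w,\id)$ are merely connected by a 2-cell of $\on{El}(F)$ and only become equal after applying $\ho$, which is exactly the mechanism you correctly identify later when describing the inverse correspondence.
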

\begin{proof}
	We define functors $\func{\alpha_c: F(c) \to \ho\left(\operatorname{El}(F)\right)[W^{-1}]}$  sending an object $x$ to $(c,x)$ and sending a morphism $\func{x \to[\varphi] y}$ to the pair $(\id_c,\varphi)$. Given a 1-morphism $\func{u: c \to c'}$ in $\CC$ we construct a natural transformation $\func{\alpha_u: \alpha_c \nat \alpha_{c'}\circ F(u)}$ whose component at $x$ is given by 
	\[
	\func{(u,\id_{F(u)x}): (c,x) \to (c',F(u)x)}.
	\]
	We now check that the conditions of \autoref{def:markedcone} are satisfied, thus defining a marked cocone. Note that the components of $\alpha_u$ are marked whenever $u$ is marked morphism in $\CC$. This implies that condition \ref{2comma:invertmarked} is satisfied. Condition \ref{2comma:coherent1} is satisfied immediately by construction. Finally to show that this family of natural transformations is compatible with the 2-morphisms of $\CC$ (condition \ref{2comma:coherent2}), observe that given $\func{u \nat v}$ in $\CC$ the components of the natural transformation
	\[
	\func{\alpha_c \nat[\alpha_u] \alpha_{c'}\circ F(u) \nat \alpha_{c'}\circ F(v)}
	\]
	are related to the components of $\alpha_{v}$ by a 2-morphism in $\operatorname{El}(F)$ and therefore become equal in the homotopy category. It is a straightforward exercise to check that this cone represents the functor $N^{\dagger}_{F}$.
\end{proof}

\begin{example}
	We return again to the setting of \autoref{exam:adjiscolim}, considering the functor 
	\[
	\func{T:[2]\to \Cat}
	\]
	which picks out the adjoint functors $L$ and $R$, together with the composite $L\circ R=\id_{\mathcal{D}}$. Define $\on{El}(T)_{\leq 1}\subset \on{El}(T)$ to be the full subcategory on the objects over $0$ or $1$ (i.e. the Grothendieck construction of the functor picking out $R$ as a morphism in $\Cat$), and equip it with the induced marking $\on{El}(T)_{\leq 1}^\dagger$. 
	
	Since both markings are composition closed, we can apply the dual of \autoref{cor:equivtobig} to the inclusion 
	\[
	\func{I:\on{El}(T)_{\leq 1}^\dagger\to \on{El}(T)^\dagger.}
	\]
	The criteria are trivially satisfied for objects over $0$ or $1$. In the case of an object $(2,d)\in\on{El}(T)^\dagger$, it is immediate that the morphism $(1,R(d))\to (2,d)$ given by $\id_d$ is terminal. For a marked morphism $\phi:(0,b)\to (2,d)$, the diagram 
	\[
	\begin{tikzcd}
	(0,b)\arrow[rr,circled, "R(\phi)"] \arrow[dr,circled,"\phi"']& &  (1,R(d))\arrow[dl,"\id_d"]\\
	& (2,d) & 
	\end{tikzcd}
	\]
	commutes.  It follows that every marked morphism in equivalent to $\id_d$ in the slice category, and thus is terminal. Therefore $I$ induces an equivalence on $\infty$-localizations. 
	
	One can then easily check that the obvious functors 
	\[
	\func{\mathcal{C}^\natural \to[\iota] \on{El}(T)_{\leq1}^\dagger \to[r] \mathcal{C}^\natural} 
	\]
	satisfy $r\circ \iota=\id_{\mathcal{C}}$, and that there is a marked natural transformation $\func{\id_{\on{El}(T)_{\leq1}^\dagger}\nat \iota\circ r}$, so that $r$ and $\iota$ induce an equivalence on $\infty$-categorical localizations. One can also show this by applying the undualized version of \autoref{cor:equivtobig} to $\func{\iota:\mathcal{C}^\natural \to \on{El}(T)_{\leq1}^\dagger}$.

	%		We can then apply (the original version of) \autoref{cor:equivtobig} to the inclusion $\mathcal{C}^\natural\to \on{El}(T)_{\leq 1}^\dagger$. It is immediate that the hypotheses apply in the case of $\mathcal{C}_{(1,c)/}^\natural$. Considering the case of $\mathcal{C}_{(0,d)/}^\natural$, we find that, given a marked object $\func{f:(0,d)\to (1,c)}$ given by an isomorphism $\func{f:R(d)\to c}$, and any other object $\func{g:(0,d)\to (1,b)}$, there is a unique morphism $\func{g\circ f^{-1}: c\to b}$ such that the diagram 
	%		\[
	%		\begin{tikzcd}
	%			& R(d)\arrow[dl, "f"']\arrow[dr,"g"]& \\
	%			c\arrow[rr,"g\circ f^{-1}"'] & & b
	%		\end{tikzcd}
	%		\]
	%		commutes, so $f$ is initial. 
	
	It is worth noting that it is no accident that this is an equivalence of $\infty$-categorical localizations, rather than just a 1-categorical one. In a hypothetical $(\infty,2)$-categorical theory of marked colimits, the adjunction data should still give the $(\infty,2)$-marked colimit of $T$. We would thus expect the Grothendieck construction, once $\infty$-localized, to compute the $(\infty,2)$-marked colimit, to be equivalent to $\mathcal{C}$
\end{example}

\begin{remark}
	Let $C^{\dagger}$ be a marked 1-category and consider the constant functor 
	\[
	\func*{\underline{*}: C \to \Cat}
	\]
	with value the terminal category $*$. Then \autoref{thm:buckleycomputes} shows that its 1-categorical localization $C^\dagger[W^{-1}]$ is the marked colimit of the functor $\underline{*}$.
\end{remark}

\subsection{A criterion for cofinality}

We now come to the decategorified condition and the discussion of (strict) marked cofinality.

\begin{definition}
	Let $\func{f: \CC^{\dagger} \to \DD^{\dagger}}$ be a marked 2-functor. We say that $f$ is \emph{marked cofinal} if and only if, for every 2-functor $\func{F:\DD \to \mathbb{A}}$, the canonical restriction map
	\[
	\func{N^{\dagger}_{F} \nat N^{\dagger}_{f^{*}F}}
	\]
	is a levelwise equivalence of categories.
\end{definition}	

\begin{theorem}\label{thm:decatAdag}
	Let $\func{f: \CC^{\dagger} \to \DD^{\dagger}}$ be a marked 2-functor. Then, $f$ is cofinal if and only if the following conditions hold
	\begin{enumerate}
		\item\label{asump:initstrict} For every object $d\in \DD$, there exists a morphism $\func{g_d: d\to f(c)}$ which is initial in both $\ho \left(\CC_{d\upslash}^\dagger\right)[\mathcal{W}^{-1}]$ and  $\ho \left(\DD_{d\upslash}^\dagger\right)[\mathcal{W}^{-1}]$.
		\item\label{assump:markedinitstrict} Every marked morphism \begin{tikzcd}
			d \arrow[r,circled] & f(c)
		\end{tikzcd}
		is initial in $\ho \left(\CC_{d\upslash}^\dagger\right)[\mathcal{W}^{-1}]$. 
		\item\label{asump:presundermarkedstrict}  For any marked morphism \begin{tikzcd}f:b\arrow[r,circled] & d\end{tikzcd} in $\DD$, the induced functors 
		\[
		\func{f^\ast: \ho \left(\CC_{d\upslash}^\dagger\right)[\mathcal{W}^{-1}] \to \ho \left(\DD_{d\upslash}^\dagger\right)[\mathcal{W}^{-1}]}
		\] 
		preserve initial objects.
	\end{enumerate}
\end{theorem}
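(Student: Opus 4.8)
The plan is to reduce marked cofinality to a statement about $\Cat$-valued diagrams, reinterpret the marked colimits that arise as localized Grothendieck constructions via \autoref{thm:buckleycomputes}, and then recognize conditions \ref{asump:initstrict}--\ref{asump:presundermarkedstrict} as the hypotheses of a strict, $1$-categorical form of Theorem~A\textsuperscript{$\dagger$} applied to a base-change functor. First I would observe that $N^{\dagger}_{F}(a)$ depends on $F\colon\DD\to\mathbb{A}$ only through the $\Cat$-valued functor $\mathbb{A}(F(\mathblank),a)\colon\DD^{(\op,\mathblank)}\to\Cat$; since the $2$-Yoneda embedding of $\mathbb{A}$ is fully faithful and detects levelwise equivalences, it suffices to test the defining condition on $\mathbb{A}=\Cat$. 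Thus $f$ is marked cofinal if and only if, for every $2$-functor $F\colon\DD\to\Cat$, the restriction $N^{\dagger}_{F}\nat N^{\dagger}_{f^{*}F}$ is a levelwise equivalence. As $N^{\dagger}_{F}$ is corepresented by $\on{colim}^{\dagger}F$, which exists and equals $\ho(\on{El}(F)^{\dagger})[W^{-1}]$ by \autoref{thm:buckleycomputes}, this condition translates into the requirement that the base-change functor $\ho(\on{El}(f^{*}F)^{\dagger})[W^{-1}]\to\ho(\on{El}(F)^{\dagger})[W^{-1}]$ induced by $f$ be an equivalence of categories for every such $F$, where $\on{El}(f^{*}F)=\CC\times_{\DD}\on{El}(F)$.

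For the sufficiency direction I would apply the $1$-categorical form of Theorem~A\textsuperscript{$\dagger$} (the strict truncation of \autoref{thm:thebigone}; compare \autoref{thm:Adag1Cat}) to the functor $\ho(\on{El}(f^{*}F))\to\ho(\on{El}(F))$. The crucial computation identifies its comma categories: over an object $(d,x)\in\on{El}(F)$ a slice object consists of $c\in\CC$, a morphism $u\colon d\to f(c)$, and a morphism $\varphi\colon F(u)(x)\to x'$ in $F(f(c))$, so that forgetting $(x',\varphi)$ exhibits the slice as a Grothendieck construction over the oplax comma category $\CC_{d\upslash}$ with fibre the coslice $F(f(c))_{F(u)(x)/}$ over $(c,u)$. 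Each fibre has the initial object $(x',\varphi)=(F(u)(x),\id)$, so the projection becomes an equivalence after localization, identifying the localized slice with $\ho(\CC_{d\upslash}^{\dagger})[\mathcal{W}^{-1}]$ and sending a marked morphism $(d,x)\to(f(c),x')$ to its underlying $u\colon d\to f(c)$. Under this dictionary condition \ref{assump:markedinitstrict} is precisely the hypothesis that marked morphisms be initial, condition \ref{asump:presundermarkedstrict} is the hypothesis that marked base change preserve initial objects, and condition \ref{asump:initstrict} guarantees nonempty fibres; the clause about $\ho(\DD_{d\upslash}^{\dagger})[\mathcal{W}^{-1}]$ is supplied automatically by the oplax analogue of \autoref{lem:IDis2cofinal}.

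For the necessity direction I would feed corepresentable diagrams into the cofinality hypothesis. Taking $F=\DD(d,\mathblank)\colon\DD\to\Cat$, whose Grothendieck construction is the oplax slice $\DD_{d\upslash}$ and whose pullback along $f$ is the comma category $\CC_{d\upslash}$, cofinality provides an equivalence $\ho(\CC_{d\upslash}^{\dagger})[\mathcal{W}^{-1}]\simeq\ho(\DD_{d\upslash}^{\dagger})[\mathcal{W}^{-1}]$ induced by $f$. Transporting the initial object $\id_{d}$ of the target across this equivalence yields an initial object of $\ho(\CC_{d\upslash}^{\dagger})[\mathcal{W}^{-1}]$ represented by a marked morphism $g_{d}\colon d\to f(c)$, giving \ref{asump:initstrict}; since every marked morphism out of $d$ is equivalent to $\id_{d}$ by the oplax analogue of \autoref{lem:IDis2cofinal}, this upgrades to \ref{assump:markedinitstrict}, and naturality of the equivalences along a marked morphism $b\to d$ delivers \ref{asump:presundermarkedstrict}.

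The main obstacle I anticipate is twofold. First, one must secure the strict, $1$-categorical Theorem~A\textsuperscript{$\dagger$} invoked above: \autoref{thm:thebigone} and \autoref{thm:Adag1Cat} are stated for $(\infty,1)$-localizations, whereas both the hypotheses and the conclusion here live at the level of the genuinely $1$-categorical localizations $\ho(\mathblank)[\mathcal{W}^{-1}]$. Because $1$-categorical initiality is strictly weaker than $(\infty,1)$-initiality, one cannot merely truncate the conclusion of \autoref{thm:Adag1Cat}; instead one must re-run its fibration-and-section argument with ordinary Grothendieck (op)fibrations and $1$-categorical initial-object sections in place of the locally Cartesian machinery of \autoref{prop:locCartTrivKanFib}. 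Second, the comma-category computation demands careful bookkeeping to confirm that the $2$-cell convention built into $\on{El}(F)$ reproduces the oplax slice $\CC_{d\upslash}$ rather than its lax counterpart, and that the marking on $\on{El}(F)^{\dagger}$ restricts correctly under base change; the variance conventions forced by the $2$-nerve make the orientation of these slices the most delicate part of the argument.
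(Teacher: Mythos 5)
Your necessity direction (cofinal $\Rightarrow$ conditions) is essentially the paper's argument: test on the representables $R_d = \DD(d,\mathblank)$, identify $\on{El}(R_d) \cong \DD_{d\upslash}$ and $\on{El}(f^*R_d) \cong \CC_{d\upslash}$, and transport the initial object $\id_d$ across the equivalence supplied by \autoref{thm:buckleycomputes}. The sufficiency direction, however, has a genuine gap --- in fact two. First, as you yourself concede, the ``strict, $1$-categorical Theorem~A\textsuperscript{$\dagger$}'' you invoke is not available: conditions \ref{asump:initstrict}--\ref{asump:presundermarkedstrict} only provide initial objects in the $1$-categorical localizations $\ho(\mathblank)[\mathcal{W}^{-1}]$, which is strictly weaker than the $(\infty,1)$-initiality required by \autoref{thm:thebigone}, and the conclusion you need (an equivalence of $1$-categorical localizations from these weaker hypotheses) does not follow from any result proved in the paper; deferring this to ``re-running the fibration-and-section argument'' leaves the main content of the implication unproved. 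Second, the slice identification on which your dictionary rests fails as stated: the projection from the slice of $\on{El}(f^{*}F)^{\dagger}$ under $(d,x)$ down to $\CC_{d\upslash}$ does have fibrewise initial objects $(F(u)x,\id)$, but the comparison morphisms $(\id_{c},\varphi)\colon (c,F(u)x,u,\id)\to(c,x',u,\varphi)$ are marked only when $\varphi$ is invertible, so the projection does \emph{not} become an equivalence after localizing at the marked morphisms, and you cannot read off initiality in the localized slice from condition \ref{assump:markedinitstrict} alone. (The preliminary reduction to $\mathbb{A}=\Cat$ is also asserted rather than proved; it is not a formal consequence of $2$-Yoneda, since the functors $\mathbb{A}(F(\mathblank),a)$ that actually occur do not range over all contravariant $\Cat$-valued functors.)

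The paper avoids all of this by arguing directly and strictly for an arbitrary target $\mathbb{A}$: from the chosen initial morphisms $\gamma_d\colon d\to f(c_d)$ it builds an explicit inverse $f_{!}$ to the restriction functor between the categories of marked cocones $\mathbb{A}^{\dagger}_{F\upslash}$ and $\mathbb{A}^{\dagger}_{f^{*}F\upslash}$, fibrewise over $\mathbb{A}$, by first showing that every marked cocone $\alpha$ for $f^{*}F$ induces a functor $T^{\alpha}_{d}\colon \ho(\CC^{\dagger}_{d\upslash})[W^{-1}]\to\mathbb{A}(F(d),a)$ and then setting $f_{!}(\alpha)_d=T^{\alpha}_{d}(\gamma_d)$; conditions \ref{asump:initstrict}--\ref{asump:presundermarkedstrict} are used precisely to make this well defined and functorial. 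That elementary cocone-level construction is the piece you would have to supply in place of the hypothetical strict Theorem~A\textsuperscript{$\dagger$}.
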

\begin{proof}
	\begin{implications}
		\implication Let $d \in \DD$, and define the functor
		\[
		\func*{R_d: \DD \to \Cat;d^{\prime} \mapsto \DD (d,d^{\prime})}
		\]
		It is immediate that $\on{El}(R_d)=\DD_{d \upslash}$ (resp. $\on{El}(f^*R_d)=\CC_{d \upslash}$). Invoking \autoref{thm:buckleycomputes} and the fact that $f$ is cofinal, we obtain an equivalence of categories
		\[
		\func{f_d: \ho\left(\CC_{d\upslash}^{\dagger}\right)[W^{-1}] \to \ho\left(\DD_{d\upslash}^{\dagger}\right)[W^{-1}].}
		\]
		Using the fact that $f_d$ is essentially surjective we can find some $\func{g_d:d \to f(c)}$ in $\ho\left(\CC_{d\upslash}^{\dagger}\right)[W^{-1}]$ that becomes equivalent to the identity map in $ \ho\left(\DD_{d\upslash}^{\dagger}\right)[W^{-1}]$. It is clear that $g_d$ is initial in both categories since equivalences of categories preserve and reflect initial objects. Since marked morphisms are initial in $\ho \left(\DD_{d\upslash}^\dagger\right)[\mathcal{W}^{-1}]$ the second condition is immediate from the fact that $f_d$ is an equivalence. 
		
		Let $\begin{tikzcd}
		b \arrow[r,circled,"u"] & d
		\end{tikzcd}$
		be a marked morphism and consider the following commutative diagram induced by precomposition with $u$
		\[
		\begin{tikzcd}[ampersand replacement=\&]
		\ho\left(\CC_{d\upslash}^{\dagger}\right)[W^{-1}] \arrow[r,"f_d"] \arrow[d] \& \ho\left(\DD_{d\upslash}^{\dagger}\right)[W^{-1}] \arrow[d] \\
		\ho\left(\CC_{b\upslash}^{\dagger}\right)[W^{-1}] \arrow[r,"f_b"] \& \ho\left(\DD_{b\upslash}^{\dagger}\right)[W^{-1}]
		\end{tikzcd}
		\]
		Then it is clear that $u \sim u \circ g_d$. This implies that $u \circ g_d$ is initial in both categories, so the third condition holds. 
		\backimplication Let $\func{F: \DD \to \mathbb{A}}$ be a 2-functor. We will show that the restriction functor
		\[
		\func{f^{*}:\mathbb{A}^{\dagger}_{F\upslash} \to \mathbb{A}^{\dagger}_{f^{*}F\upslash}}
		\]
		is an equivalence of categories after passages to fibers. We fix once and for all a choice of morphisms 
		\[
		\set{\gamma_d: \begin{tikzcd}[ampersand replacement=\&]
			d \arrow[r] \& f(c_d)
			\end{tikzcd}}_{d \in \DD}
		\] 
		satisfying the conditions of the theorem. Let $\set{\alpha_c}_{c \in \CC}$ be a marked cocone for $f^{*}F$. For every $d \in \DD$ we define a functor
		\[
		\func*{\hat{T}^{\alpha}_d: \CC^{\dagger}_{d\upslash} \to \mathbb{A}(F(d),a)}
		\]
		by sending an object $\func{d \to f(c)}$ to the composite $\func{F(d)\to F(f(c)) \to[\alpha_c] a}$. Given a morphism  in  $\CC^{\dagger}_{d\upslash}$ we map  to  the 2-morphism obtained by pasting  the  diagram  below
		\[
		\begin{tikzcd}[row sep =3em, column sep =4em]
		& |[alias=X]| F(f(c))\arrow[r, "\alpha_c", ""{name=V,inner sep=1pt,below}]\arrow[d] & a \\
		F(d)\arrow[ur]\arrow[r,""{name=U,inner sep=1pt,below left}] & |[alias=Y]|F(f(c^{\prime}))\arrow[ur,"\alpha_{c^\prime}"'] &
		\arrow[Rightarrow, from=X,
		to=U, shorten <=.3cm,shorten >=.6cm, "\theta"']\arrow[Rightarrow, from=V,
		to=Y, shorten <=.3cm,shorten >=.6cm, "\alpha_u"']
		\end{tikzcd}
		\]
		Let  us  note  that if two 1-morphisms $u,v$ in  $ \CC^{\dagger}_{d\upslash}$  are related by a 2-morphism then 
		\[
		\hat{T}^{\alpha}_d(u)=\hat{T}^{\alpha}_d(v).
		\]
		Moreover, one  can immediately check that $\hat{T}^{\alpha}_d$ maps marked  edges to  invertible 2-morphisms.  Therefore, we obtain a  factorization
		\[
		\func{T^{\alpha}_d: \ho \left( \CC^{\dagger}_{d\upslash}\right)[W^{-1}] \to \mathbb{A}(F(d),a).}
		\]
		Finally, we observe that for any  $\func{w:d \to d^{\prime}}$ we have a commutative diagram
		\[
		\begin{tikzcd}[ampersand replacement=\&]
		\ho \left( \CC^{\dagger}_{d^{\prime}\upslash}\right)[W^{-1}] \arrow[r, "T^{\alpha}_{d^{\prime}}"] \arrow[d] \& \mathbb{A}(F(d^{\prime}),a)  \arrow[d] \\
		\ho \left( \CC^{\dagger}_{d\upslash}\right)[W^{-1}] \arrow[r,"T^{\alpha}_d"]  \&  \mathbb{A}(F(d),a)
		\end{tikzcd}
		\]
		where the  vertical morphisms are induced by precomposition by $w$ (resp. $F(w)$). 
		
		Our aim is to produce an inverse to the restriction functor
		\[
		\func{f_{!}:\mathbb{A}^{\dagger}_{f^{*}F\upslash}  \to \mathbb{A}^{\dagger}_{F\upslash}  }.
		\]
		We define the action of $f_{!}$ on a marked cone $\set{\alpha_c}_{c \in \CC}$ by the formula
		\[
		f_{!}(\alpha)_d= T^{\alpha}_d(\gamma_d).
		\]
		Given a morphism $\func{w: d \to d^{\prime}}$ we consider the following diagram in $\ho \left( \CC^{\dagger}_{d\upslash}\right)[W^{-1}]$ 
		\[
		\begin{tikzcd}[ampersand replacement=\&]
		d \arrow[d,"\gamma_d"] \arrow[r,"w"] \& d^{\prime} \arrow[d,"\gamma_{d^{\prime}}"]  \\
		f(c_d) \arrow[r,dotted,"\gamma_{d,d^{\prime}}"]  \& f(c_{d'})
		\end{tikzcd}
		\]
		and observe that the dotted arrow exists and is unique by our hypothesis. It is worth noting that if $w$ were marked, we would obtain  a morphism between  initial  objects and hence an  isomorphism. Now we define
		\[
		f_{!}(\alpha)_{w}= T^{\alpha}_d(\gamma_{d,d^{\prime}}).
		\]
		It is straightforward to check that conditions  of \autoref{def:markedcone} are satisfied. Thus $f_{!}$ is well defined on objects. Given $\func{\theta: a \to a'}$ and a morphism of marked cones $\set{\func{\epsilon_c:\theta \circ \alpha_c \nat \beta_c}}_{c \in \CC}$ we can produce  natural  transformation of functors $\func{\eta_d:(\theta*T)^{\alpha}_{d} \nat T^{\beta}_{d}}$ for every $d \in \DD$. Here $(\theta*T)^{\alpha}_{d}$ denotes the functor induced by postcomposition with $\theta$. This shows  that we can  define $f_{!}(\theta)_d=\eta_d(\gamma_d)$. After  some routine verifications we clearly see that the assignment on morphisms is functorial. The corresponding definition of the action $f_{!}$ on 2-morphisms is analogous.
		
		It is an straightforward exercise to check that the functors
		\[
		f^*:\mathbb{A}^{\dagger}_{F\upslash} \llra  \mathbb{A}^{\dagger}_{f^{*}F\upslash}: f_{!} 
		\]
		induce equivalences of categories upon  passage to fibers. Using \autoref{rem:scaledcocartesian} we see that $f^{*}$ is an equivalence of scaled coCartesian fibrations which in turn implies that the natural transformation $\func{N^{\dagger}_{F} \nat N^{\dagger}_{f^{*}F}}$ is a levelwise weak equivalence. \qedhere
	\end{implications}
\end{proof}

\begin{remark}
	The criteria of \autoref{thm:decatAdag} are quite sensitive to the choice of marking --- so much so, in fact, that operations which do not change the $\infty$-categorical localization (closing under 2-out-of-3, for example), can substantially alter marked cofinality. \autoref{exam:adjiscolim} provides an excellent demonstration of this characteristic --- if we take the marked colimit of $\func{T:[2]\to \Cat}$ where $[2]$ is equipped with the marking $[2]^\diamond$ from \autoref{exam:adjiscolim}, the marked colimit is the category $\mathcal{C}$. However, if we take the marking $[2]^\sharp$, the marked colimit is the pseudocolimit, and is thus equivalent to $\mathcal{D}$. This observation is also borne out by \autoref{thm:decatAdag} as applied to $\func{{[2]}^\diamond\to {[2]}^\sharp}$. The category $\on{ho}([2]^\diamond_{1/})[\mathcal{W}^{-1}]$ is isomorphic to $[1]$. However, the marked morphism $1\to 2$ in $[2]^\sharp$ corresponds to $1\in [1]$, and thus is not initial, so $\func{{[2]}^\diamond\to {[2]}^\sharp}$ is not marked cofinal.
\end{remark}

\begin{remark}
	In an earlier version of this paper, we deployed another definition of a marked colimit in a 2-category. Under this definition, we asserted that, in analogy to the 1-categorical case, a marked colimit is a 2-initial object in an appropriate 2-category of marked cocones. The recent paper \cite{Clingman}, convinced us that, while there is a connection between this definition and the one given in the current version of the paper, the current version is the correct notion of a marked colimit. A representation of the functor $N_F^\dagger$ determines a 2-initial object in the 2-category of marked cocones, however, the existence of such a 2-initial object does \emph{not} imply that the functor in question is representable. Our ongoing work on the cofinality conjecture had also led us to the conclusion that the approach based on representable functors lends itself far better to the $\infty$-categorical case.  
\end{remark}

\subsection{The cofinality conjecture}

We now turn to our main purpose in discussing marked 2-colimits: the cofinality conjecture. Before this, however, we offer some intermediate conjectures which suggest the broad strokes of a larger theory. In this section we will make use of some concepts which have not yet been rigorously defined, for instance lax and oplax slices of an $(\infty,2)$-functor.

Or first conjecture concerns the background notion of $(\infty,2)$-colimit:
\begin{conjecture}\label{conj:infty2colimsexist}
	There is a theory of marked $(\infty,2)$-colimits in any $(\infty,2)$-category categorifying the strict 2-categorical theory. Such a theory should take as input a functor $\func{F:X\to \mathbfcal{C}}$, where $X$ is a marked simplicial set, and yield a marked $(\infty,2)$-colimit cone over $F$ as output.  
\end{conjecture}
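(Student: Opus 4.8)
The plan is to \emph{define} the theory by representability, mirroring the strict development of \autoref{sec:cofinality}, and then to \emph{establish} existence through an $(\infty,2)$-categorical Grothendieck construction. First I would fix the input data: $X$ a marked (scaled) simplicial set, encoding an $(\infty,2)$-category together with a distinguished collection of $1$-morphisms, $\mathbfcal{C}$ an $(\infty,2)$-category, and $F\colon X\to \mathbfcal{C}$ a functor. The initial step is to construct, for each object $a\in\mathbfcal{C}$, an $(\infty,1)$-category of marked cocones under $F$ with tip $a$, assembling into a presheaf $N^\dagger_F\colon \mathbfcal{C}\to \Cat_{\infty}$ generalizing the $N^\dagger_F$ of \autoref{def:markedcone}. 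Concretely, marked cocones should be modeled as (homotopy-coherent) natural transformations from the $(\infty,2)$-analogue of $\hat{\mathfrak{C}}^\dagger_{\CC\upslash}$ --- the functor sending each object to its localized oplax slice --- into the corepresentable $\mathbfcal{C}(F(-),a)$. This presupposes a rigorous notion of the localized lax and oplax slices of an $(\infty,2)$-functor, which I would build from the marked relative-nerve technology of \autoref{sec:QA}, taking $\widetilde{\cchi}$ as the blueprint. A \emph{marked $(\infty,2)$-colimit} of $F$ is then an object $a$ equipped with an equivalence $\mathbfcal{C}(a,-)\simeq N^\dagger_F$.

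To prove existence --- and to obtain the $(\infty,2)$-analogue of the computation in \autoref{thm:buckleycomputes} --- I would pass through a straightening/unstraightening equivalence. The conjectural input is a genuine $(\infty,2)$-Grothendieck construction identifying functors into $\Cat_{(\infty,2)}$ with marked-scaled (co)Cartesian fibrations, in which $\widetilde{\cchi}$ plays the role of the relative nerve. Granting this, for a $\Cat_{(\infty,2)}$-valued functor $F\colon X\to \Cat_{(\infty,2)}$ the marked colimit is obtained as the $\infty$-localization of the total space $\on{El}(F)^\dagger$ of the classifying fibration, exactly as in \autoref{thm:buckleycomputes}, where \autoref{rem:scaledcocartesian} already exhibits $\on{El}(F)^\dagger\to \mathbb{A}$ as a scaled coCartesian fibration in the strict case. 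For a general target $\mathbfcal{C}$ I would reduce to this case by the (co)Yoneda embedding, building $N^\dagger_F$ itself as a $\Cat_{(\infty,2)}$-valued functor and testing representability against it. The truncation claim --- that the theory categorifies the strict one --- would then follow by verifying that on scaled nerves of strict $2$-categories the construction recovers $\ho(\on{El}(F))[W^{-1}]$, i.e.\ reduces to \autoref{thm:buckleycomputes}.

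The principal obstacle is precisely this $(\infty,2)$-Grothendieck construction: no straightening/unstraightening equivalence between $\Cat_{(\infty,2)}$-valued functors and marked-scaled (co)Cartesian fibrations is presently available at the required generality, and without it one can neither verify representability of $N^\dagger_F$ nor compute the colimit as a total-space localization. A secondary obstacle is coherence: the cocone axioms \ref{2comma:invertmarked}--\ref{2comma:coherent2} are strict equations in the $2$-categorical setting, whereas in the $(\infty,2)$-world they become an infinite tower of coherence data, so even assembling $N^\dagger_F$ into a genuine functor requires a homotopy-coherent nerve in place of the explicit formula used above. I expect both difficulties to resolve simultaneously once the anticipated straightening technology is in place: the higher coherences are then packaged automatically by the classifying fibration, and existence reduces to fibrancy together with the localization argument already carried out in \autoref{sec:sec}.
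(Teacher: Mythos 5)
The statement you are asked to prove is a \emph{conjecture}: the paper offers no proof of it, only heuristic evidence (the remark following the conjectures notes that one can write down an analogue of the category of marked cocones using techniques similar to \cite[Notation 4.1.5]{Lurie_Goodwillie}, and that the remaining difficulties are fibrancy, functoriality, and the calibration of dual constructions). Your proposal is aligned with the route the authors themselves envisage --- defining marked $(\infty,2)$-colimits by representability of a cocone functor $N^\dagger_F$, with $\widetilde{\cchi}$ playing the role of the relative nerve for a hoped-for $(\infty,2)$-Grothendieck construction, exactly as the introduction anticipates --- but it is a research program, not a proof, and you say as much: the ``principal obstacle'' you name (a straightening/unstraightening equivalence between $\Cat_{(\infty,2)}$-valued functors and marked-scaled (co)Cartesian fibrations) is precisely the missing technology that makes the statement a conjecture rather than a theorem. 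Assuming that equivalence and then deducing the conjecture is circular at the level of what is actually known.

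Two further points deserve care. First, your ``existence'' step conflates the existence of a \emph{theory} (a well-defined functor $N^\dagger_F$ and a notion of universal marked cocone) with the existence of the colimit itself; even in the strict setting the paper only computes $\on{colim}^\dagger F$ when the target is $\Cat$ (\autoref{thm:buckleycomputes}), and colimits need not exist in a general target $\mathbfcal{C}$, so a reduction ``by the (co)Yoneda embedding'' can at best establish functoriality of $N^\dagger_F$, not representability. Second, the coherence issue you flag as ``secondary'' is in fact where the real work lies: conditions \ref{2comma:invertmarked}--\ref{2comma:coherent2} of \autoref{def:markedcone} must be replaced by an infinite tower of coherences, and the paper's own warning about the Duskin nerve forcing a convention on the direction of compositors (and not interacting well with $2$-morphism duals) means that even \emph{stating} the localized oplax slice $(\mathbfcal{C}_{d\upslash})^\dagger$ of an $(\infty,2)$-functor, which your $N^\dagger_F$ presupposes, is nontrivial. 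In short: your sketch is a reasonable and faithful elaboration of the authors' intended program, but it does not constitute a proof, and no proof exists in the paper to compare it against.
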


Examining the 2-categorical definition more closely, we also expect certain special cases to arise:

\begin{conjecture}
	In the case of the $\infty$-bicategory $\mathfrak{C}\!\on{at}_{\infty}$: 
	\begin{itemize}
		\item The marked $(\infty,2)$-colimit of a functor $\func{F:X^\flat\to \mathfrak{C}\!\on{at}_{\infty}}$ coincides with the lax $\infty$-colimit of the underlying functor of $F$.
		\item The marked $(\infty,2)$-colimit  of a functor $\func{F:X^\sharp\to \Cat_\infty \to \mathfrak{C}\!\on{at}_{\infty}}$ coincides with the $(\infty,1)$-colimit. 
	\end{itemize}
\end{conjecture}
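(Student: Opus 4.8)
The plan is to reduce both statements to a single formula computing the marked $(\infty,2)$-colimit via a Grothendieck construction, mirroring \autoref{thm:buckleycomputes} one rung higher on the categorical ladder. Granting \autoref{conj:infty2colimsexist}, I would first establish the master identity
\[
\on{colim}^\dagger F \simeq L_{\mathcal{W}}\bigl(\on{El}(F)^\dagger\bigr),
\]
where $\on{El}(F)^\dagger$ denotes the (hypothetical) $(\infty,2)$-Grothendieck construction of $\func{F:X\to \mathfrak{C}\!\on{at}_\infty}$---the total space of the scaled coCartesian fibration it classifies---equipped with the marking $\mathcal{W}$ whose edges are the coCartesian lifts of the marked edges of $X$, and where $L_{\mathcal{W}}$ is the $(\infty,1)$-localization at $\mathcal{W}$. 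This is exactly the expected $(\infty,2)$-enhancement of \autoref{thm:buckleycomputes}, and the anticipation, voiced in the introduction, that $\widetilde{\cchi}$ is the relative nerve of such a genuine $(\infty,2)$-Grothendieck construction is what makes the reduction credible: one would exhibit $\on{El}(F)^\dagger$, together with its tautological cocone, as a representing object for the cocone functor $N^\dagger_F$ after localization.

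Granting the master identity, each bullet becomes an analysis of an extremal marking. For the flat case $X^\flat$, only equivalences are marked, so $\mathcal{W}$ consists of edges that are already invertible in the $(\infty,2)$-sense; localization then does essentially nothing and $L_{\mathcal{W}}(\on{El}(F)^\flat)\simeq \on{El}(F)$, the bare total $(\infty,2)$-category. I would then invoke the $(\infty,2)$-enhancement of the familiar identification of the unstraightening with the lax colimit of a $\mathfrak{C}\!\on{at}_\infty$-valued functor, giving the first bullet. For the sharp case $X^\sharp$, every edge of $X$ is marked, so $\mathcal{W}$ is the full class of coCartesian edges; inverting all coCartesian morphisms of a Grothendieck construction is precisely the standard recipe for the $(\infty,1)$-colimit of the underlying functor $\func{X\to \Cat_\infty}$. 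Because $F$ factors through $\Cat_\infty \hookrightarrow \mathfrak{C}\!\on{at}_\infty$, the fibration is a coCartesian fibration of $(\infty,1)$-categories and the localization returns exactly its $(\infty,1)$-colimit, yielding the second bullet.

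The main obstacle is foundational: the whole argument is contingent on \autoref{conj:infty2colimsexist} and on a rigorous theory of lax slices and marked colimits in the $(\infty,2)$-setting. Within that framework, the genuinely hard technical input is the master identity itself---the $(\infty,2)$-analogue of \autoref{thm:buckleycomputes}---which demands a working straightening/unstraightening equivalence between $\Cat_{(\infty,2)}$-valued functors and marked-scaled coCartesian fibrations, together with a proof that the localized Grothendieck construction represents $N^\dagger_F$. A subtler point, even once the machinery is available, is the calibration of variance: as the discussion around \autoref{def:laxslice} stresses, the 2-nerve forces a choice of 2-morphism direction, and one must take care that the flat marking produces the \emph{lax} rather than the oplax colimit---the bookkeeping distinguishing $\downslash$ from $\upslash$ is where the argument is most delicate.
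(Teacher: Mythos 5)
The statement you are proving is presented in the paper as a \emph{conjecture}: the authors give no proof, only the remark that they ``expect certain special cases to arise'' from examining the 2-categorical definition, with \autoref{thm:buckleycomputes} and \autoref{exam:adjiscolim} serving as decategorified evidence. So there is no argument in the paper to compare yours against, and your proposal should be judged as a proof strategy rather than checked against a reference.

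As a strategy it is well aligned with the authors' own program --- they explicitly anticipate that $\widetilde{\cchi}_{\DD}$ is the relative nerve of a genuine $(\infty,2)$-Grothendieck construction, and your ``master identity'' $\on{colim}^\dagger F\simeq L_{\mathcal{W}}(\on{El}(F)^\dagger)$ is exactly the categorification of \autoref{thm:buckleycomputes} that such a construction would supply. But you should be clear that this is where the entire mathematical content lives: the master identity is not a lemma you can ``first establish'' --- it is itself a conjecture of at least the same depth as the statement being proved, requiring a straightening/unstraightening equivalence for $\Cat_{(\infty,2)}$-valued functors and a representability argument for $N^\dagger_F$ that currently do not exist. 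Once it is granted, the two bullets do follow by the extremal-marking analysis you describe (with the caveats you correctly flag: in the flat case $\mathcal{W}$ consists only of the coCartesian lifts of \emph{degenerate} edges, not all equivalences, though the localization is still trivial; and the lax-versus-oplax calibration forced by the Duskin nerve's convention on compositors must be tracked). So the proposal is a credible reduction, not a proof; its value is in isolating the single unproven input on which both bullets depend.
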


We will now assume that \autoref{conj:infty2colimsexist} holds. There is good reason to believe this should be true --- using techniques similar to those of \cite[Notation 4.1.5]{Lurie_Goodwillie}, one can write down an analogue of the category of marked cocones. The difficulties that then follow are technical: proving fibrancy, identifying functoriality, and relating various dual constructions (the last is somewhat complicated by the fact that the Duskin 2-nerve forces us to fix a convention on the direction of compositors, and does not play well with dualizing 2-morphisms). 

Based both on Theorem A\textsuperscript{$\dagger$} and on the relation of the decategorified criterion to marked 2-colimits, we then propose the following conjecture: 

\begin{conjecture}[The cofinality conjecture]
	Precomposition with a marked $(\infty,2)$-functor 
	\[
	\func{F: \mathbfcal{C}^\dagger \to \mathbfcal{D}^\dagger}
	\] preserves marked $(\infty,2)$-colimits if and only if the following two conditions are satisfied for every object $d\in \mathbfcal{D}$:
	\begin{itemize}
		\item There is an object $c\in \mathbfcal{C}$ and a morphism \begin{tikzcd}
			d\arrow[r] & F(c)
		\end{tikzcd}
		initial in the $(\infty,1)$-localizations of the slices $(\mathbfcal{C}_{d\upslash})^\dagger$ and $(\mathbfcal{D}_{d\upslash})^\dagger$.
		\item Every marked morphism is initial in the $(\infty,1)$-localization of the marked $(\infty,2)$ slice category $(\mathbfcal{C}_{d\upslash})^\dagger$. 
		\item For any marked morphism \begin{tikzcd}
			f: d \arrow[r,circled] & b
		\end{tikzcd}
		the induced functor $\func{f^\ast: L_{\mathcal{W}}((\mathbfcal{C}_{b\upslash})^\dagger)\to L_{\mathcal{W}}((\mathbfcal{C}_{d\upslash})^\dagger)}$ preserves initial objects.
	\end{itemize}
\end{conjecture}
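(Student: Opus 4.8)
The plan is to prove the conjecture by categorifying, one rung up the ladder, the argument for its strict shadow \autoref{thm:decatAdag}, replacing strict $2$-categories and their $1$-categorical localizations $\ho(-)[\mathcal{W}^{-1}]$ by $(\infty,2)$-categories and their $(\infty,1)$-localizations $L_{\mathcal{W}}(-)$ throughout. Granting \autoref{conj:infty2colimsexist}, the first structural input I would need is the $(\infty,2)$-analogue of \autoref{thm:buckleycomputes}: for a marked $(\infty,2)$-functor $\func{G:\mathbfcal{D}^\dagger\to \mathbfcal{A}}$, the marked $(\infty,2)$-colimit $\on{colim}^\dagger G$ should be computed as the $(\infty,1)$-localization of a marked Grothendieck construction $\on{El}(G)^\dagger$, with the marked cocones over any such $G$ organizing into a scaled coCartesian fibration over $\mathbfcal{A}$, exactly as in \autoref{rem:scaledcocartesian}. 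Under this identification, preservation of marked colimits by precomposition with $F$ translates into the restriction functor $\func{F^\ast:\mathbfcal{A}^\dagger_{G\upslash}\to \mathbfcal{A}^\dagger_{F^\ast G\upslash}}$ being an equivalence of scaled coCartesian fibrations, which (as at the end of \autoref{thm:decatAdag}) may be detected fiberwise.

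For the $(\Rightarrow)$ direction I would test preservation against the corepresentable functors $\func{R_d:\mathbfcal{D}\to \mathfrak{C}\!\on{at}_{\infty}; d'\mapsto \mathbfcal{D}(d,d')}$, whose marked colimit cocone recovers the oplax slice, so that $\on{El}(R_d)^\dagger=(\mathbfcal{D}_{d\upslash})^\dagger$ and $\on{El}(F^\ast R_d)^\dagger=(\mathbfcal{C}_{d\upslash})^\dagger$. Preservation of $\on{colim}^\dagger R_d$ yields, after $(\infty,1)$-localization and invoking the Grothendieck description above, an equivalence $\func{F_d:L_{\mathcal{W}}((\mathbfcal{C}_{d\upslash})^\dagger)\to L_{\mathcal{W}}((\mathbfcal{D}_{d\upslash})^\dagger)}$. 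Essential surjectivity of $F_d$ produces a morphism $g_d:d\to F(c)$ becoming equivalent to $\id_d$ in the target, hence initial in both slices; and since equivalences preserve and reflect initial objects, the initiality of marked morphisms in the target (the oplax analogue of \autoref{lem:IDis2cofinal}) transports back along $F_d$ to give the second and third conditions, exactly as in the forward implication of \autoref{thm:decatAdag}.

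For the $(\Leftarrow)$ direction I would reverse the backward half of \autoref{thm:decatAdag}. Fixing a coherent family $\set{g_d}_{d\in\mathbfcal{D}}$ of initial morphisms --- assembled into an initial section of the cocone fibration via the $(\infty,2)$-analogue of \autoref{prop:locCartTrivKanFib} --- I would construct a candidate inverse $F_!$ to $F^\ast$ on $(\infty,1)$-localized marked cocone categories by the formula $F_!(\alpha)_d=T^\alpha_d(g_d)$. The three conditions furnish precisely the data needed: the first that $g_d$ exists and is bi-initial, the second that marked morphisms collapse to identities after localization, and the third that the comparison $\gamma_{d,d'}$ between chosen lifts is unique up to equivalence, giving functoriality of $F_!$. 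One then verifies that $F^\ast$ and $F_!$ are mutually inverse on fibers and concludes, again through \autoref{rem:scaledcocartesian}, that $F^\ast$ is an equivalence of scaled coCartesian fibrations and hence that $\func{N^\dagger_G\nat N^\dagger_{F^\ast G}}$ is a levelwise equivalence.

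The principal obstacle is foundational rather than conceptual: every step above is conditional on \autoref{conj:infty2colimsexist} and, more pointedly, on the existence of a genuine $(\infty,2)$-Grothendieck construction relating functors into $\mathfrak{C}\!\on{at}_{\infty}$ with marked-scaled coCartesian fibrations --- precisely the technology whose absence motivated the elaborate $\widetilde{\cchi}$ machinery of \autoref{sec:QA}. Without the $(\infty,2)$-analogue of \autoref{thm:buckleycomputes}, the pivotal reduction from a levelwise equivalence of cocone categories to a fiberwise equivalence of fibrations has no rigorous counterpart, and the remaining verifications, though formally parallel to \autoref{thm:decatAdag}, cannot be anchored. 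A secondary but genuine difficulty, already flagged before the conjecture, is the variance bookkeeping forced by the Duskin nerve: since it fixes the direction of compositors and resists dualization of $2$-morphisms, relating the oplax slices $(\mathbfcal{C}_{d\upslash})^\dagger$ appearing here to the $\widetilde{\cchi}$-type construction that should compute the cocone fibration will require the same careful calibration of duals warned of in \autoref{sec:QA} before any of the above can be made precise.
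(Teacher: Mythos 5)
The statement you are trying to prove is stated in the paper as a \emph{conjecture}, and the paper deliberately offers no proof of it: the authors present \autoref{thm:thebigone} and the strict, decategorified cofinality criterion \autoref{thm:decatAdag} only as \emph{evidence}, and explicitly defer the conjecture itself to ongoing work (with the special case of $(\infty,1)$-categorical domains treated in the cited paper in preparation). So there is no proof in the paper to compare yours against, and your proposal does not close that gap: it is a roadmap, not an argument. Every load-bearing step --- the identification of marked $(\infty,2)$-colimits with $(\infty,1)$-localizations of a marked Grothendieck construction (the would-be $(\infty,2)$-analogue of \autoref{thm:buckleycomputes}), the organization of marked cocones into a scaled coCartesian fibration as in \autoref{rem:scaledcocartesian}, and the fiberwise detection of equivalences --- is invoked without existing, and you candidly say so. To your credit, the skeleton you describe (testing against corepresentables $R_d$ for the forward direction, building $F_!$ from a coherent family of initial morphisms $\set{g_d}$ for the converse) is exactly the categorification of \autoref{thm:decatAdag} that the authors themselves signal as the intended strategy, and your diagnosis of the two obstructions (the missing $(\infty,2)$-Grothendieck construction and the variance problems forced by the Duskin nerve) matches the paper's own framing in \autoref{sec:QA}.

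Beyond the acknowledged foundational dependence, there is one place where your sketch would need genuine new content even granting \autoref{conj:infty2colimsexist}: in the backward direction you define $F_!(\alpha)_d = T^\alpha_d(g_d)$ and appeal to uniqueness of the comparison morphisms $\gamma_{d,d'}$ "up to equivalence." In the strict proof of \autoref{thm:decatAdag} this uniqueness is literal --- $\gamma_{d,d'}$ is the unique map out of an initial object in a $1$-category --- which is what makes $f_!$ strictly functorial. In the $(\infty,2)$-setting "unique up to a contractible space of choices" does not by itself assemble the $T^\alpha_d(\gamma_{d,d'})$ into a coherent cocone; you would need the full strength of something like \autoref{prop:locCartTrivKanFib} applied to the (as yet undefined) fibration of marked cocones to produce the section all at once, rather than object-by-object. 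Naming that as the precise point where the strict argument fails to categorify formally would sharpen the proposal; as written, the conjecture remains open and your text should be read as a plausible plan contingent on technology the paper does not supply.
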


\begin{remark}
	Establishing the appropriate definitions and proving the cofinality conjecture is the subject of ongoing work. In the upcoming paper \cite{AG}, the first author proves the cofinality conjecture for functors $\func{F:\mathcal{C}^\dagger\to \mathfrak{C}\!\on{at}_{\infty}}$, where $\mathcal{C}^\dagger$ is a marked $(\infty,1)$-category. 
\end{remark}

%\newpage
%\section*{Glossary}
%\addcontentsline{toc}{section}{Glossary}
%For ease of reading, we provide a (non-comprehensive) list of notation appearing in this paper, as well as references to the relevant pages.
%
%\printglossaries

\newpage

\end{document}